\theoremstyle{plain}
\newcounter{stmcounter}[section]
\newtheorem{corollary}[stmcounter]{Corollary}
\newtheorem{theorem}[stmcounter]{Theorem}
\newtheorem{proposition}[stmcounter]{Proposition}
\newtheorem{lemma}[stmcounter]{Lemma}
\newtheorem{definition}[stmcounter]{Definition}
\newtheorem{remark}[stmcounter]{Remark}
\newtheorem{assumption}[stmcounter]{Assumption}
\title{Well-Posedness of the generalised Dean--Kawasaki Equation with correlated noise on bounded domains}
\author{Shyam Popat\thanks{Mathematical Institute, University of Oxford, Oxford OX2 6GG, UK (popat@maths.ox.ac.uk)}}
\date{\today}
\begin{document}

\maketitle


\small
\begin{center}
ABSTRACT. In this paper, we extend the notion of stochastic kinetic solutions introduced in \cite{fehrman2024well} to establish the well-posedness of stochastic kinetic solutions of generalised Dean--Kawasaki equations with correlated noise on bounded, $C^2$-domains with Dirichlet boundary conditions.  The results apply to a wide class of non-negative boundary data, which is based on certain a priori estimates for the solutions, that encompasses all non-negative constant functions including zero and all smooth functions bounded away from zero.
\end{center}
\vspace{10pt}
\textbf{MSC2020 subject classification:} \hspace{5pt}60H15, 82B21, 82B31, 35Q70\\
\textbf{Keywords:}\hspace{5pt} Stochastic partial differential equation, Dean--Kawasaki equation, Dirichlet boundary conditions
\normalsize

\section{Introduction}
We will consider the well-posedness of the generalised Dean--Kawasaki initial boundary value problem on a $C^2$ and bounded domain $U\subset\mathbb{R}^d$,
\begin{equation}\label{generalised Dean--Kawasaki Equation Stratonovich}
   \begin{cases}
    \partial_t\rho=\Delta\Phi(\rho)-\nabla\cdot(\sigma(\rho)\circ\dot{\xi}^F+ \nu(\rho)), & \text{on} \hspace{5pt} U\times(0,T],\\
\Phi(\rho)=\bar{f},&\text{on}\hspace{5pt} \partial U\times[0,T],\\
    \rho(\cdot,t=0)=\rho_0 ,&\text{on}\hspace{5pt} U\times\{t=0\}.
   \end{cases} 
\end{equation}
The main uniqueness and existence assumptions for the non-linear functions $\Phi,\sigma$ and $\nu$ are given in Assumptions \ref{assumptions on coefficients for uniqueness} and \ref{existence assumptions} respectively. 
The assumptions allow us to consider a wide range of relevant stochastic PDE such as the full range of fast diffusion and porous medium equations, i.e. $\Phi(\rho)=\rho^m$ for every $m\in(0,\infty)$ and the degenerate square root $\sigma(\rho)=\sqrt{\rho}$.
Subsequently we will refer to this choice of $\Phi$ and $\sigma$, and the alternative choice $\sigma(\rho)=\Phi^{1/2}(\rho)$ as the model case.
The Stratonovich noise $\circ \dot{\xi}^F$ is white in time and sufficiently regular in space, see Definition \ref{definition noise}.\\
We will prove the existence and uniqueness of a stochastic kinetic solutions of \eqref{generalised Dean--Kawasaki Equation Stratonovich}, in the sense of Definition \ref{definition of stochastic kinetic solution of generalised Dean--Kawasaki equation} below.
In the definition we do not insist that $\rho$ is continuous all the way to the boundary and so we \textcolor{red}{are unable to} define the boundary condition in a pointwise sense.
We only require the solution $\rho$ to be locally in the Sobolev space $W^{1,2}(U)$, i.e. when cutoff away from zero and infinity, and otherwise just to be $L^1(U)$ regular.
Hence the boundary condition in \eqref{generalised Dean--Kawasaki Equation Stratonovich} is expressed indirectly via $\Phi(\rho)$ and is done using the trace theorem, see \textcolor{red}{\cite[Sec.~5.5]{evans2022partial}}.
Furthermore, the boundary condition does not depend on time, $\bar{f}=\bar{f}(x^*)$ for $x^*\in\partial U$.
The restriction on the boundary data $\bar{f}$ that we are able to handle comes from assuming finiteness of the various boundary terms that appear in the a priori energy estimates.\\
A detailed summary outlining various reasons why one would be interested in studying stochastic PDE such as \eqref{generalised Dean--Kawasaki Equation Stratonovich} is given in \textcolor{red}{\cite[Sec.~1.3]{fehrman2024well}}.
To briefly summarise, they arise as fluctuating continuum models for interacting particle systems, see \cite{giacomin1998deterministic}, and they can also be used to describe the hydrodynamic limit of simple particle processes, for instance the simple exclusion process, see \cite{quastel1999large}, and the zero range process, see \textcolor{red}{\cite[Sec.~4.3]{dirr2016entropic}}.\\
In \cite{fehrman2024well}, Fehrman and Gess prove the well-posedness of equations such as \eqref{generalised Dean--Kawasaki Equation Stratonovich} on the torus.
Motivated by the particle system application, we will follow and extend the techniques introduced there in order to extend the well-posedness to a bounded domain.
In this context the Dirichlet boundary condition enables one to model sources and sinks.

\subsection{Background and relevant literature}
The Dean--Kawasaki equation was derived independently in the physics literature by Dean \cite{dean1996langevin} and Kawasaki \cite{kawasaki1994stochastic}.
In \cite{dean1996langevin}, Dean considered an $N$ particle system $\{X^i\}_{i=1}^N$ following Langevin dynamics with pairwise interaction potential $V^1$, 
\[\dot{X}_t^i=\frac{1}{N}\sum_{j=1}^N V^1(X_t^i-X_t^j)+\sqrt{2}\dot{\beta^i_t},\]
where $\{\beta^i\}_{i=1}^N$ are independent Brownian motions.
\textcolor{red}{It was shown that empirical measure $\rho^N_t:=\frac{1}{N}\sum_{i=1}^N\delta_{X_t^i}$ of the particle system satisfies an equation whose quadratic variation agrees with those of solutions to} 
\begin{equation}\label{dean's equation}
    \partial_t\rho^N=\textcolor{red}{\frac{\alpha}{2}}\Delta \rho^N-\nabla\cdot\left(\rho^N\nabla V^1\ast\rho^N\right)-\frac{\sqrt{2}}{\sqrt{N}}\nabla\cdot\left(\sqrt{\rho^N}\xi\right),
\end{equation}
where $\xi$ is space\textcolor{red}{-}time white noise. 
\textcolor{red}{That is to say, the empirical measure of the particle system and equation \eqref{dean's equation} can be seen to agree as semimartingales.}
The irregularity of space\textcolor{red}{-}time white noise implies that equations like \eqref{dean's equation} and \eqref{generalised Dean--Kawasaki Equation Stratonovich} with space\textcolor{red}{-}time white noise are not renormalisable using Hairer's regularity structures \cite{hairer2014theory} or Gubinelli, Imkeller and Perkowski's paracontrolled distributions \cite{gubinelli2015paracontrolled}, even in dimension $d=1$.
Since the derivative hits space\textcolor{red}{-}time white noise in \eqref{dean's equation}, one can only consider solutions $\rho^N$ in the space of distributions rather than functions, in which case the square root $\sqrt{\rho^N}$ as well as the product $\sqrt{\rho^N}\xi$ have no \textcolor{red}{pathwise classical meaning}.
\textcolor{red}{One might hope that there exist non-atomic solutions to equation \eqref{dean's equation} or \eqref{generalised Dean--Kawasaki Equation Stratonovich} with space\textcolor{red}{-}time white noise.
However, it has been shown by Konarovskyi, Lehmann and von Renesse in \cite[Thm.~2.2]{konarovskyi2019dean} and \cite[Thm.~1]{konarovskyi2020dean} that martingale solutions to equation \eqref{dean's equation} exist if and only if $\alpha=N\in\mathbb{N}$ and the initial condition is atomic, that is $\rho^N_0=\frac{1}{N}\sum_{i=1}^N\delta_{x_i}$. 
In this case, martingale solutions to \eqref{dean's equation} are in fact atomic empirical measures.}
Furthermore, they show that one at least needs to add a drift correction term into the equations to obtain non-trivial solutions.
It is for this reason that the Dean--Kawasaki equation is described as a \say{rigid mathematical object} in \cite{cornalba2023density}.
However, this rigidity can be overcome when suitable regularisations are considered, for instance smoothing or truncating the noise.
We now discuss previous work on regularised versions of the Dean--Kawasaki equation based on relevance with the present work.\\
As mentioned above, this work primarily follows the techniques presented in Fehrman and Gess, \cite{fehrman2024well}.
There the authors consider a white in time, spatially correlated noise, see Definition \ref{definition noise} below.
Even though the noise is sufficiently regular in space the well-posedness of equation \eqref{generalised Dean--Kawasaki Equation Stratonovich} is tricky due to the square root singularity $\sigma(\rho)=\sqrt{\rho}$.
Indeed, the It\^o-to-Stratonovich conversion of \eqref{generalised Dean--Kawasaki Equation Stratonovich} introduces a term with factor $(\sigma'(\rho))^2\nabla\rho$ that creates a singularity at zero, see derivation of equation \eqref{generalised Dean--Kawasaki equation Ito} and Remark \ref{remark highlighting singularity in ito correction} below.
The lack of local integrability of $\log(\rho)$ means that even a weak solution to \eqref{generalised Dean--Kawasaki Equation Stratonovich} cannot be considered. 
Instead, the authors consider stochastic kinetic solutions, see Definition \ref{definition of stochastic kinetic solution of generalised Dean--Kawasaki equation} below, where the compact support in the velocity variable restricts the solution away from its zero set, see Remark \ref{remark about compact support of test function in defn of stochastic kinetic solution}.
\textcolor{red}{In fact, the Stratonovich noise in \eqref{generalised Dean--Kawasaki Equation Stratonovich} is essential to the well-posedness, as the above It\^o-to-Stratonovich term cancels exactly with a \say{bad} It\^o correction term whenever It\^o's formula is applied, for instance see derivation of the kinetic equation \cite[Eq.~(3.1)]{fehrman2024well} and subsequent derivation, as well $L^2(U)$ a priori energy estimates, see computation between equation \eqref{ito formula in energy estimate} and \eqref{equation for L2 estimate} below.}\\
The kinetic formulation and the notion of kinetic solutions was first introduced in the PDE setting by Lions, Perthame and Tadmor \cite{lions1994kinetic} where the authors considered the kinetic formulation for multidimensional scalar conservation laws.
They chose the term \say{kinetic equation} due to its analogy with the classical kinetic models such as
 Boltzmann or Vlasov models, see \cite{cercignani1988boltzmann}.
Later, Chen and Perthame \cite{chen2003well} were able to  treat non-isotropic degenerate parabolic--hyperbolic equations.
A new chain rule type condition introduced there allowed kinetic equations to be well defined, even when the macroscopic fluxes are not locally integrable.\\
Before proceeding, it is worth mentioning the relationship between stochastic kinetic solutions and weak solutions in the context of the Dean--Kawasaki equation.
When the diffusion coefficient $\sigma$ is sufficiently smooth so that we can make sense of weak solutions to \eqref{generalised Dean--Kawasaki Equation Stratonovich}, a weak solution is a stochastic kinetic solution, see \textcolor{red}{\cite[Prop.~5.21]{fehrman2024well}}.
Conversely, under additional assumptions on $\sigma$, stochastic kinetic solutions are also weak solutions, see \textcolor{red}{\cite[Cor.~5.31]{fehrman2024well}}.\\
Other related works are that of Fehrman and Gess \cite{fehrman2019well,fehrman2021path} where they prove the path by path well-posedness of equations like \eqref{generalised Dean--Kawasaki Equation Stratonovich} via a kinetic approach, motivated by the theory of stochastic viscosity solutions, see \cite{lions1998fully,lions1998fully1,lions2000fully}, and stochastic conservation laws, see \cite{lions2013scalar,lions2014scalar,gess2014scalar,gess2017stochastic}. 
In the recent work of Clini \cite{clini2023porous} the result was extended to a smooth bounded domain with zero Dirichlet boundary conditions in the case of the porous media and fast diffusion equations, i.e. $\Phi(\rho)=\rho^m, \nu(\rho)=0$ in \eqref{generalised Dean--Kawasaki Equation Stratonovich}.
\textcolor{red}{The pathwise well-posedness in the above works is proven via rough path techniques, which imposes extra regularity on the diffusion coefficient $\sigma$ that is needed to overcome the roughness of the noise $\xi^F$.
For instance, in the case of Brownian noise, $\sigma$ is required to be six times continuously differentiable, so the works fall well outside of the critical square root case.} \\
Another extension of \cite{fehrman2024well} was that of Wang, Wu and Zhang \cite{wang2022dean}. 
The authors are able to show the existence of renormalised kinetic solutions to non-local equations such as \eqref{dean's equation} where we have a convolution, rather than the local interaction $\nu(\rho)$ in \eqref{generalised Dean--Kawasaki Equation Stratonovich} considered here and in \cite{fehrman2024well}.  
There the interaction kernel ($V^1$ in \eqref{dean's equation}) is assumed to satisfy the Ladyzhenskaya-Prodi-Serrin (LPS) condition, a regularity condition first studied in the context of Navier-Stokes equations in  \cite{prodi1959teorema, serrin1961interior, ladyzhenskaya1967uniqueness} and applied to SDEs and distributional dependent SDEs in \cite{krylov2005strong,rockner2021well} respectively. 
The main difficulty lies in the proof of existence, where the lack of uniform $L^p(\mathbb{T}^d)$ estimates of weak solutions of the regularised equation implies that the kinetic measures of the regularised equation are not uniformly bounded over $[0,T]\times \mathbb{T}^d\times \mathbb{R}$.
One needs to instead use entropy estimates, similar to Proposition \ref{entropy estimate} below, to show the tightness of the kinetic measures of the approximate equations.\\
Another approach was by Dareiotis and Gess \cite{dareiotis2020nonlinear} where they constructed probabilistic solutions to \eqref{generalised Dean--Kawasaki Equation Stratonovich} via an entropy formulation.
The advantage of the kinetic approach considered here and in \cite{fehrman2024well} over the entropy approach is that, by the precise identification of the kinetic defect measure, the kinetic approach can handle $L^1(U)$ integrable initial data, whereas the entropy approach requires $L^m(U)$ integrable initial data in the model case $\Phi(\xi)=\xi^m$.
Furthermore, the approach in \cite{dareiotis2020nonlinear} required $C^{1,\alpha}$-regularity from the noise coefficient $\sigma$, and therefore remained far from the critical square root case $\sigma(\rho)=\sqrt{\rho}$.\\
Djurdjevac, Kremp and Perkowski in \cite{djurdjevac2022weak} were able to prove the existence of strong solutions to equations like \eqref{generalised Dean--Kawasaki Equation Stratonovich} in the case of truncated noise and mollified square root.
The proof follows by a variational approach for a transformed equation and energy estimates for the approximate (Galerkin projected) system.
However, again it is worth mentioning that the approach can only handle smooth coefficients and so also cannot handle the square root singularity.\\
Also relevant is the work of Martini and Mayorcas \cite{martini2022additive}, where local well-posedness of equations like \eqref{generalised Dean--Kawasaki Equation Stratonovich} with space\textcolor{red}{-}time white noise is proven when the square root $\sigma(\rho)=\sqrt{\rho}$ is replaced by $\sigma=\sqrt{\rho_{det}}$ where $\rho_{\det}$ solves a related deterministic PDE.
The proof is via paracontrolled distribution theory.\\
Finally, we mention that there is an extensive literature surrounding the well-posedness of stochastic nonlinear diffusion equations on a smooth, bounded domains in $\mathbb{R}^d$ with zero Dirichlet boundary conditions.
In \cite{barbu2006weak}, Barbu, Bogachev, Da Prato and R\"ockner are able to show the existence of weak solutions of additive equations of the form
\[\partial_t \rho=\Delta\Phi(\rho)+\sigma(\rho)\dot{W}_t,\]
where $W_t$ is cylindrical Brownian motion,  $\sigma=\sqrt{Q}$ in the case that $Q$ is linear, non-negative and of finite trace and $\Phi'$ satisfies a polynomial growth condition.
See \cite{barbu2009existence} for a proof of strong solutions in the porus medium case  with Lipschitz $\sigma$.
Well-posedness of similar equations with multiplicative noise for dimension $1\leq d \leq 3$ was shown by Barbu, Da Prato and R\"ockner \cite{barbu2008existence,barbu2008some}.

\subsection{Organisation and main contributions}
In Section \ref{section 2 Setup and Kinetic formulation} we setup the problem.
Firstly, in Section \ref{section 2.1 Definition of the noise} we give the definition and assumptions on the nose $\xi^F$ and subsequently in Section \ref{secion 2.2 Definition of stochastic kinetic solution} we define a stochastic kinetic solution.
The setup is analogous to \textcolor{red}{\cite[Sec.~ 2 and 3]{fehrman2024well}}, the main difference being that we incorporate the boundary condition in point two of the definition of stochastic kinetic solution, Definition \ref{definition of stochastic kinetic solution of generalised Dean--Kawasaki equation}. \\
In Section \ref{section 3.1 Assumptions and definition of convolution kernels and cutoff functions} we state the required assumptions for uniqueness as well as the definitions of convolution kernels and cutoff functions. 
Given that we are working on a bounded domain, in order to avoid boundary terms that we do not have control over appearing when integrating by parts, our solution concept Definition \ref{definition of stochastic kinetic solution of generalised Dean--Kawasaki equation} is modified so that our test functions are also compactly supported in space.
When formalising this in the uniqueness proof, this amounts to multiplying the test functions in \cite{fehrman2024well} by a new spatial cutoff $\iota_\gamma$, see Definition \ref{definition of convolution kernels and cutoff functions}.
It is worthwhile to mention that the restriction of working on a $C^2$ domain $U$ arises so that we are able to differentiate the distance function that forms part of the definition of the spatial cutoff, as well as to apply Sobolev embedding theorems later on.
In Section \ref{section 3.3 Uniqueness proof} the uniqueness is proven.
The main novelty in the proof is in the techniques used to bound the new terms arising when the gradient hits the spatial cutoff.\\
In Section \ref{section 4.1 a priori estimates for regularised equation} we begin by proving $L^2(U)$ a priori energy estimates of an appropriately regularised version of \eqref{generalised Dean--Kawasaki Equation Stratonovich} in Proposition \ref{energy estimate of regularised equation proposition}.
We use this to prove further space-time regularity results in the remainder of the section.
 In Section \ref{section4.2 entropy estimate} we prove an entropy estimate for the equation, similar to \textcolor{red}{\cite[Prop.~5.18]{fehrman2024well}}.
A localised version of the entropy estimate is used to prove a bound for the decay of the kinetic measure at zero, a statement needed in the uniqueness proof but proved in Section \ref{section 4.3 decay of kinetic measure at zero} for convenience.   
All of the estimates follow from applying It\^o's formula. 
An important novelty is that we introduce harmonic PDEs (see for example Definitions \ref{definition of functions g and h_M} and \ref{definition of v delta})  with carefully chosen boundary data that ensures we obtain functions which vanish along the boundary when applying It\^o's formula, and so we can integrate by parts without worrying about additional boundary terms.
The boundary regularity assumptions needed for our energy estimates will impose restrictions the boundary data $\bar{f}$ that we can consider.\\
The rest of the existence arguments, including tightness and compactness arguments of Section \ref{section 4.2 existence of solution} follow directly from arguments from \textcolor{red}{\cite[Sec.~5]{fehrman2024well}} so we are brief and simply include the main ideas for completeness.


\section{Setup and Kinetic formulation} \label{section 2 Setup and Kinetic formulation}

\subsection{Definition of the noise}\label{section 2.1 Definition of the noise}
We briefly introduce and state the assumptions needed for the noise term $\xi^F$. 
Both the definition and assumptions are identical to those introduced in \textcolor{red}{\cite[Sec.~2]{fehrman2024well}}.

\begin{definition}[The noise $\xi^F$]\label{definition noise}
Let $F:=\{f_k : U\to\mathbb{R}\}_{k\in\mathbb{N}}$ be a sequence of continuously differentiable functions and $\{B^k:[0,T]\to\mathbb{R}^d\}_{k\in\mathbb{N}}$  a sequence of independent, d-dimensional Brownian motions on a filtered probability space $(\Omega,\mathcal{F},(\mathcal{F}_t)_{t\in[0,T]},\mathbb{P})$.
The noise $\xi^F$, superscripted by $F$ to denote dependence on $\{f_k\}_k$, is defined by 
\[\xi^F: U\times[0,T]\to\mathbb{R}^d,\hspace{20pt}\xi^F(x,t):=\sum_{k=1}^\infty f_k(x) B^k_t.\]
\end{definition}

For ease of notation let us define three quantities related to the spatial component of the noise,
\[ F_1:U\to\mathbb{R}
\hspace{10pt} \text{defined by} \hspace{10pt}F_1(x):=\sum_{k=1}^\infty f_k^2(x);\]
\[ F_2:U\to\mathbb{R}^d
\hspace{10pt} \text{defined by} \hspace{10pt}F_2(x):=\sum_{k=1}^\infty f_k(x)\nabla f_k(x)=\frac{1}{2}\sum_{k=1}^\infty \nabla f_k^2(x);
\]
\[ F_3:U\to\mathbb{R}
\hspace{10pt} \text{defined by} \hspace{10pt}F_3(x):=\sum_{k=1}^\infty |\nabla f_k(x)|^2.\]
We need to make the following assumptions on the noise.

\begin{assumption}[Assumption on noise]\label{Assumption on noise Fi}
Suppose that $F_i$, $i=1,2,3$ are continuous on $U$. Furthermore assume $\nabla\cdot F_2$ is bounded on $U$.
\end{assumption}

Note that by H\"older's inequality, the boundedness of $F_1$ and $F_3$ imply the partial sums of $F_2$ are absolutely convergent. 
The fact that $F_1$ is bounded implies that the noise $\xi^F$ is almost surely finite and bounded in $L^2(U)$.

\subsection{Definition of stochastic kinetic solution}\label{secion 2.2 Definition of stochastic kinetic solution}
We begin by re-writing equation \eqref{generalised Dean--Kawasaki Equation Stratonovich} using It\^o noise.
By Definition \ref{definition noise} of the noise we have
\begin{align*}
     \partial_t\rho&=\Delta\Phi(\rho)-\nabla\cdot(\sigma(\rho)\circ\dot{\xi}^F+ \nu(\rho))=\Delta\Phi(\rho)-\nabla\cdot\nu(\rho)-\sum_{k=1}^\infty\nabla\cdot(\sigma(\rho)f_k\circ dB_t^k).
\end{align*}
Denoting $F_{\sigma,k}(\xi,x):=\sigma(\xi)f_k(x)$ for fixed $x\in U$, the It\^o-to-Stratonovich conversion formula, see \textcolor{red}{\cite[Sec.~3.3]{oksendal2013stochastic}}, the chain rule and product rule give formally that
\begin{align}\label{generalised Dean--Kawasaki equation Ito}
    \partial_t\rho&=\Delta\Phi(\rho)-\nabla\cdot(\sigma(\rho)\dot{\xi}^F+ \nu(\rho))+ \frac{1}{2}\sum_{k=1}^\infty\nabla\cdot\left(\frac{\partial F_{\sigma,k}(\rho,x)}{\partial B^k}\right)\nonumber\\
    &=\Delta\Phi(\rho)-\nabla\cdot(\sigma(\rho)\dot{\xi}^F+ \nu(\rho))+ \frac{1}{2}\sum_{k=1}^\infty\nabla\cdot\left(f_k\sigma'(\rho)\frac{\partial\rho}{\partial B^k}\right)\nonumber\\
    &=\Delta\Phi (\rho) -\nabla\cdot (\sigma(\rho) \dot{\xi}^F + \nu(\rho)) + \frac{1}{2}\sum_{k=1}^\infty\nabla\cdot(f_k\sigma'(\rho)\nabla(f_k\sigma(\rho))) \nonumber\\
     &=\Delta\Phi (\rho) -\nabla\cdot (\sigma(\rho) \dot{\xi}^F + \nu(\rho)) + \frac{1}{2}\nabla\cdot(F_1[\sigma'(\rho)]^2\nabla\rho + \sigma'(\rho)\sigma(\rho)F_2),
\end{align}
which we will equivalently sometimes write in the formal SDE notation as
\[d\rho_t=\Delta\Phi (\rho)\,dt -\nabla\cdot (\sigma(\rho) \,d{\xi}^F + \nu(\rho)\,dt) + \frac{1}{2}\nabla\cdot(F_1[\sigma'(\rho)]^2\nabla\rho + \sigma'(\rho)\sigma(\rho)F_2)\,dt.\]
The below remark illustrates how to interpret integrals involving the divergence of the It\^o noise in \eqref{generalised Dean--Kawasaki equation Ito}.
We use it when interpreting the kinetic equation \eqref{kinetic equation} below.

\begin{remark}
    For $\mathcal{F}_t$-adapted processes $g_t\in L^2(\Omega\times[0,T];L^2(U))$ and $h_t\in L^2(\Omega\times[0,T];H^1(U))$ and any $t\in[0,T]$ we define
    \[\int_0^t\int_U g_s\nabla\cdot(h_s\,d\xi^F)=\sum_{k=1}^\infty\left(\int_0^t\int_U g_s f_k\nabla h_s\cdot\,dB^k_s+\int_0^t\int_U g_s h_s\nabla f_k\cdot\,dB^k_s \right).\]
\end{remark}

\begin{remark}\label{remark highlighting singularity in ito correction}
In the model case $\sigma(\rho)=\rho^{1/2}$, the first correction term arising in the It\^o equation \eqref{generalised Dean--Kawasaki equation Ito} is $\frac{1}{8}\nabla\cdot(F_1\rho^{-1}\nabla\rho)=\frac{1}{8}\nabla\cdot(F_1\nabla \log(\rho))$.
If the solution $\rho$ approaches $0$ at any time the above term is a singular. In fact it is not even clear how we can define the notion of a weak solution since we do not know if $\log(\rho)$ is locally integrable.
\end{remark}

We now turn our attention to providing the kinetic formulation for the generalised Dean--Kawasaki equation \eqref{generalised Dean--Kawasaki equation Ito}. 
By now the kinetic formulation is well understood, and for the Dean--Kawasaki equation the full derivation can be found in \textcolor{red}{\cite[Sec.~3]{fehrman2024well}}. 
We briefly describe the motivation below.
Suppose that for a convex function $S\in C^2(\mathbb{R};\mathbb{R})$ we were interested in properties of functions of the solution $S(\rho)$, where $\rho$ solves equation \eqref{generalised Dean--Kawasaki equation Ito}.
To derive the equation satisfied by $S(\rho)$ one applies It\^o's formula 
\begin{equation*}\label{ito formula for function of solution}
    dS(\rho)=S'(\rho)d\rho + \frac{1}{2}S''(\rho)d\langle\rho\rangle.
\end{equation*} 
However, one cannot do this directly since $\rho$ is not regular enough to apply It\^o's formula, and instead we work on the level of the regularised equation.

\begin{definition}[Regularised equation]\label{definition regularised generalised Dean--Kawasaki equation}
For every $\alpha\in(0,1)$, the regularised generalised Dean--Kawasaki equation $\rho^\alpha$ is \textcolor{red}{a solution to the equation}
\begin{equation}\label{Regularised equation}
 \partial_t\rho^\alpha=\Delta\Phi (\rho^\alpha)+\alpha\Delta \rho^\alpha -\nabla\cdot (\sigma(\rho^\alpha) \dot{\xi}^F + \nu(\rho^\alpha)) + \frac{1}{2}\nabla\cdot(F_1[\sigma'(\rho^\alpha)]^2\nabla\rho^\alpha + \sigma'(\rho^\alpha)\sigma(\rho^\alpha)F_2).
\end{equation} 
\end{definition}

After obtaining an equation for $S(\rho^\alpha)$, one aims to re-write the equation in terms of the kinetic function.

\begin{definition}[Kinetic function]\label{definition of kinetic function}
  For every $\alpha\in(0,1)$, given a non-negative solution $\rho^\alpha$ of equation \eqref{Regularised equation}, define the kinetic function $\chi:U\times [0,T] \times \mathbb{R}\to \{0,1\}$ by
  \[\chi(x,t,\xi)=\mathbbm{1}_{\{0\leq \xi \leq \rho^\alpha(x,t)\}}.\]
   By an abuse of notation we do not write the $\alpha$ dependence on the left hand side.
  In the sequel we will only ever use $\chi$ to denote the kinetic function of solutions to~\eqref{generalised Dean--Kawasaki equation Ito}.
  The kinetic function can equivalently be viewed as $\chi:\mathbb{R}\times\mathbb{R}\to\{0,1\}$, $\chi=\chi(\rho^\alpha,\xi)$.
\end{definition}

In Lions, Perthame and Tadmor \cite{lions1994kinetic} the kinetic function is called the velocity distribution or velocity profile since there they view $\xi$ as a velocity variable.
Here we will adopt the same nomenclature.
By analogy with the theory of gasses, $\chi$ can be called a pseudo-Maxwellian.
It is not then difficult to obtain a distributional equation for the kinetic function by using identities such as, if $S(0)=0$,
\[S(\rho(x,t))=\int_\mathbb{R}S'(\xi)\chi(x,\xi,t)\,d\xi.\]
Finally, in taking the regularisation limit ($\alpha\to0$), one needs to control a term containing $\alpha|\nabla\rho^\alpha|^2$ where we have the competing decay of $\alpha$ and divergence of $|\nabla\rho^\alpha|$ in the limit. 
On the level of the kinetic equation, in the theory of entropy solutions, see for instance \textcolor{red}{\cite[Sec.~2]{chen2003well}}, this is precisely quantified by the kinetic measure $q$.

\begin{definition}[Kinetic measure]\label{definition kinetic measure}
    Let $(\Omega,\mathcal{F},(\mathcal{F}_t)_{t\in[0,\infty)},\mathbb{P})$ be a filtered probability space.\\
    A kinetic measure $q$ is a map from $\Omega$ to the set of non-negative, locally finite measures on $U\times(0,\infty)\times[0,T]$ such that, for every $\psi\in C_c^\infty(U\times(0,\infty))$ we have
    \[(\omega,t)\in(\Omega,[0,T])\to\int_0^t\int_{\mathbb{R}}\int_U\psi(x,\xi)\, q(dx,d\xi,dt)(\omega)=:\int_0^t\int_{\mathbb{R}}\int_U\psi(x,\xi)\, dq(x,\xi,t)(\omega)\]
    is $\mathcal{F}_t$ predictable.
\end{definition}

The resulting equation forms the basis of the definition of stochastic kinetic solution.
We encapsulate some of the properties of the kinetic measure in points three and four of the definition below.

\begin{definition}[Stochastic kinetic solution of \eqref{generalised Dean--Kawasaki equation Ito}]\label{definition of stochastic kinetic solution of generalised Dean--Kawasaki equation}
Let $\rho_0\in L^1(\Omega;L^1(U))$ be a non-negative $\mathcal{F}_0$ measurable initial condition. A stochastic kinetic solution of \eqref{generalised Dean--Kawasaki equation Ito} is a non-negative, almost surely continuous $L^1(U)$ valued $\mathcal{F}_t$-predictable function $\rho\in L^1(\Omega\times[0,T];L^1(U))$ that satisfies
\begin{enumerate}
    \item Integrability of flux: we have 
    \[\sigma(\rho)\in L^2(\Omega;L^2(U\times[0,T])) \hspace{15pt} \text{and}\hspace{15pt} \nu(\rho)\in L^1(\Omega;L^1(U\times[0,T];\mathbb{R}^d)).\]
    \item Boundary condition, local regularity of solution: for each $k\in\mathbb{N}$
     \[[(\Phi(\rho)\wedge k)\vee 1/k]-[(\bar{f}\wedge k)\vee 1/k]\in L^2(\Omega;L^2([0,T];H^{1}_0(U))).\]
    Furthermore, there exists a kinetic measure $q$ that satisfies:
    \item Regularity: almost surely, in the sense of non-negative measures,
    \[\delta_0(\xi - \rho)\Phi'(\xi)|\nabla \rho|^2\leq q \hspace{5pt} \text{on}\hspace{5pt}U\times(0,\infty)\times[0,T].\]
    \item Vanishing at infinity: we have
    \[\lim_{M\to\infty}\mathbb{E}\left(q(U\times[M,M+1]\times[0,T])\right)=0.\]
    \item The kinetic equation:
    for every $\psi\in C_c^\infty(U\times(0,\infty))$ and every $t\in(0,T]$, almost surely,
\begin{align}\label{kinetic equation}
&\int_{\mathbb{R}}\int_{U}\chi(x,\xi,t)\psi(x,\xi)\,dx\,d\xi=\int_{\mathbb{R}}\int_{U}\chi(x,\xi,t=0)\psi(x,\xi)\,dx\,d\xi \nonumber\\
&-\int_0^t \int_{U}\left(\Phi'(\rho)\nabla\rho+\frac{1}{2}F_1[\sigma'(\rho)]^2\nabla\rho +\frac{1}{2} \sigma'(\rho)\sigma(\rho)F_2\right)\cdot\nabla\psi(x,\xi)|_{\xi=\rho}\,dx\,ds\nonumber\\
    &-\int_0^t \int_{\mathbb{R}}\int_{U}d_\xi\psi(x,\xi)\,dq +\frac{1}{2}\int_0^t \int_{U}\left(
    \sigma'(\rho)\sigma(\rho)\nabla\rho\cdot F_2 + \sigma(\rho)^2F_3 \right)\partial_\xi\psi(x,\rho)\,dx\,ds\nonumber\\
    & -\int_0^t \int_{U}\psi(x,\rho)\nabla\cdot (\sigma(\rho) \,d{\xi}^F)\,dx -        \int_0^t \int_{U}\psi(x,\rho)\nabla\cdot\nu(\rho)\,dx\,ds.
\end{align}
\end{enumerate}
\end{definition}

We conclude the section with a few remarks on the above definition.

\begin{remark}\label{remark about notation in kinetic equation}
    In the kinetic equation \eqref{kinetic equation} we write $\nabla\psi(x,\xi)|_{\xi=\rho}$ to emphasise that we take gradient in the first component of $\psi$ rather than the full gradient of $\psi(x,\rho)$.\\
    We abuse notation and write $\partial_\xi\psi(x,\rho)$ to mean $\partial_\xi\psi(x,\xi)|_{\xi=\rho}$.
    \end{remark}

\begin{remark}\label{remark saying rho is locallhy h1}
Since we will assume $\Phi$ is strictly increasing (Assumption \ref{assumptions on coefficients for uniqueness}), the second point implies that locally $\rho\in H^1(U)$.
\end{remark}

\begin{remark}\label{remark about compact support of test function in defn of stochastic kinetic solution}
     In the kinetic equation it is essential that we integrate against test functions $\psi$ that are compactly supported in $U\times(0,\infty)$.
     Firstly, noting Remark \ref{remark highlighting singularity in ito correction}, the compact support in the velocity variable $\xi\in(0,\infty)$ implies that equation \eqref{kinetic equation} needs to hold only away from the zero set of the solution. 
     Secondly, the compact support in space implies that we can integrate by parts and without worrying about boundary terms. 
    The velocity and spatial compactness requirement motivates the introduction cutoff functions in Definition \ref{definition of convolution kernels and cutoff functions} that will be present in many of the choices of test function $\psi$ we make.
\end{remark}

\section{Uniqueness}\label{section 3 Uniqueness}
In Section \ref{section 3.1 Assumptions and definition of convolution kernels and cutoff functions} we begin with some assumptions on the coefficients $\Phi,\nu,\sigma$ of equation \eqref{generalised Dean--Kawasaki equation Ito} needed for uniqueness.
The assumptions are the same as in \cite{fehrman2024well} and allow us to consider the model cases.
We then introduce smoothing kernels and cutoff functions in Definition \ref{definition of convolution kernels and cutoff functions}.\\
The uniqueness is proved in Theorem \ref{main uniqueness theorem}. 
By taking limits of the various convolution kernels and cutoffs in the correct order, the only difference to the torus, \textcolor{red}{\cite[Thm.~4.7]{fehrman2024well}}, is the need to bound new terms arising when the gradient hits the spatial cutoff. 
In the proof we will use a result bounding the 
decay of the kinetic measure at zero, \textcolor{red}{\cite[Prop.~4.6]{fehrman2024well}}, which, for convenience, we prove below in Section \ref{section 4.3 decay of kinetic measure at zero}.

\subsection{Assumptions and definition of convolution kernels and cutoff functions}\label{section 3.1 Assumptions and definition of convolution kernels and cutoff functions}
We begin with the assumptions needed for uniqueness, identical to \textcolor{red}{\cite[Asm.~4.1]{fehrman2024well}}.

\begin{assumption}[Uniqueness assumptions]\label{assumptions on coefficients for uniqueness}
    Suppose $\Phi,\sigma\in C([0,\infty))$ and $\nu\in C([0,\infty);\mathbb{R}^d)$ satisfy the five assumptions:
    \begin{enumerate}
        \item We have $\Phi,\sigma\in C^{1,1}_{loc}([0,\infty))$ and $\nu\in C^1_{loc}([0,\infty);\mathbb{R}^d)$. 
        \item The function $\Phi$ is strictly increasing and starts at $0$: $\Phi(0)=0$ with $\Phi'>0$ on $(0,\infty)$. 
        \item At least linear decay of $\sigma^2$ at $0$: There exists a constant $c\in(0,\infty)$ such that 
        \[\limsup_{\xi\to 0^+}\frac{\sigma^2(\xi)}{\xi}\leq c.\]
        In particular this implies that $\sigma(0)=0$.
        \item Regularity of oscillations of of $\sigma^2$ at infinity: There is a $c\in[1,\infty)$ such that 
        \[\sup_{\xi'\in[0,\xi]}\sigma^2(\xi')\leq  c(1+\xi+\sigma^2(\xi)) \hspace{10pt} \text{for every}\hspace{3pt}\xi\in[0,\infty).\]
        \item Regularity of oscillations of of $\nu$ at infinity: There is a constant $c\in[1,\infty)$ such that 
        \[\sup_{\xi'\in[0,\xi]}|\nu(\xi')|\leq  c(1+\xi+|\nu(\xi)|)\hspace{10pt} \text{for every}\hspace{3pt}\xi\in[0,\infty).\]
    \end{enumerate}
\end{assumption}

We refer to \textcolor{red}{\cite[Rmk.~4.2]{fehrman2024well}} for a comprehensive discussion on the final two assumptions.
The assumptions are satisfied in the case that the functions $\sigma^2, \nu$ are  increasing, or are uniformly continuous, or grow linearly at infinity.
The assumption is more general than any of the above three examples and essentially amounts to a restriction on the growth of the magnitude of oscillations, rather than frequency of oscillations at infinity.\\
We now define the convolution kernels and cutoff functions required in the uniqueness proof.

\begin{definition}[Convolution kernels and cutoff functions]\label{definition of convolution kernels and cutoff functions}
\begin{itemize}
    \item Convolution kernel in space and velocity: for every $\epsilon,\delta\in(0,1)$ let $\kappa^\epsilon_d:U\to[0,\infty)$ and $\kappa_1^\delta:\mathbb{R}\to[0,\infty)$ be standard convolution kernels/ mollifiers of scale $\epsilon$ and $\delta$ on $U$ and $\mathbb{R}$ respectively.
    That is to say, let $\kappa_d\in C_c^\infty(\mathbb{R}^d), \kappa_1\in C_c^\infty(\mathbb{R})$ be non-negative and integrate to one.
    For $\epsilon,\delta\in(0,1)$ define
    \[\kappa^\epsilon_{d}(x)=\frac{1}{\epsilon^d}\kappa_d\left(\frac{x}{\epsilon}\right),\hspace{5pt}\kappa^\delta_{1}(\xi)=\frac{1}{\delta}\kappa_1\left(\frac{\xi}{\delta}\right).\]
Let $\kappa^{\epsilon,\delta}$ be defined by the product
\[\kappa^{\epsilon,\delta}(x,y,\xi,\eta):=\kappa^\epsilon_{d}(x-y)\kappa_1^\delta(\xi-\eta), \hspace{15pt} (x-y,\xi,\eta)\in U\times \mathbb{R}^2.\]
\item Cutoff of small velocity $\xi$:  for every $\beta\in(0,1)$ let $\phi_{\beta}:\mathbb{R}\to[0,1]$ be the unique non-decreasing piecewise linear function that satisfies
    \[\phi_\beta(\xi)=1 \hspace{5pt} \text{if}\hspace{5pt}\xi\geq\beta, \hspace{10pt}
    \phi_\beta(\xi)=0 \hspace{5pt} \text{if}\hspace{5pt}\xi\leq\beta/2, \hspace{10pt}
    \phi_\beta'\textcolor{red}{(\xi)}=\frac{2}{\beta}\mathbbm{1}_{\beta/2\leq\xi\leq\beta}.\]
    \item Cutoff of large velocity $\xi$: for every $M\in\mathbb{N}$ let $\zeta_M:\mathbb{R}\to[0,1]$ be the unique non-increasing piecewise linear function that satisfies
    \[\zeta_M(\xi)=1 \hspace{5pt} \text{if}\hspace{5pt}\xi\leq M, \hspace{10pt}
    \zeta_M(\xi)=0 \hspace{5pt} \text{if}\hspace{5pt} \xi\geq M+1, \hspace{10pt}
    \zeta_M'\textcolor{red}{(\xi)}=-\mathbbm{1}_{M\leq\xi\leq M+1}.\]
    \item Spatial cutoff around boundary: 
    for $\gamma$ sufficiently small (depending on the geometry of $U$), start by introducing the interior regions
    \[U_\gamma:=\{x\in U : d(x,\partial U)\geq \gamma\}\subset U,\hspace{20pt} \partial U_\gamma:=\{x\in U: d(x,\partial U)=\gamma\},\] 
    \textcolor{red}{where $d(x,\partial U)$denotes the usual minimum Euclidean distance from a point to a set.}\\
  The cutoff function is such that it takes the value $1$ in the interior of the domain, $0$ along the boundary, and linearly interpolates between the two. Explicitly we consider the function
    \begin{equation}\label{spatial cutoff}
        \iota_\gamma(x):=\frac{d(x,\partial U)\wedge \gamma}{\gamma}=
\begin{cases}
1, & \text{if} \hspace{3pt} d(x,\partial U)>\gamma,\\
\gamma^{-1}d(x,\partial U), & \text{if} \hspace{3pt} 0\leq d(x,\partial U)\leq\gamma.
\end{cases}
\end{equation}
\end{itemize}
\end{definition}
We will repeatedly use below that for any fixed $\gamma$, one can approximate the cutoff function above by a sequence of functions \textcolor{red}{that are compactly supported in $U$.} 
In this way we may abuse notation and describe $\iota_\gamma$ itself as being compactly supported \textcolor{red}{in $U$}.
\textcolor{red}{We first need to establish how to define the gradient of the spatial cutoff, stated below as a lemma without proof.}

\begin{lemma}[Derivative of spatial cutoff]\label{remark derivative of spatial cutoff}
    To define the spatial derivative of the function $\iota_\gamma$, we will differentiate the distance function.
    We know that the distance function is differentiable if and only if for every $x$ we can find a unique closest point $x^*:=\Pi_{\partial U}(x)$ on the boundary to $x$. 
   Looking at the definition of the cutoff \eqref{spatial cutoff}, we only want to differentiate the distance function for 
   $x\in U\setminus U_\gamma$, so it follows that we only need to assume this property for 
 points $x$ sufficiently close to the boundary.
\textcolor{red}{Since $U$ is a $C^2$ domain, we know that such a neighbourhood of the boundary exists due to \cite[pp.~153]{foote1984regularity}, but in general this will depend on the geometry of $U$.
For fixed domain $U$, define
\[\gamma_U:=max\{\tilde\gamma: \forall x\in\partial U_{\tilde\gamma}, \hspace{3pt} argmin(x,\partial U) \hspace{3pt} \text{is a singleton}\}.\]
For the remainder we consider $\iota_\gamma$ defined for $\gamma\in(0,\gamma_U)$.
} 
    In this range, letting $v_x$ denote the inward pointing unit normal at the boundary to point $x\in U$, with $x^*$ as above, the first derivative is given by
\[\nabla\iota_\gamma(x)=\gamma^{-1}\frac{x-x^*}{|x-x^*|}\mathbbm{1}_{U\setminus U_\gamma}(x):=\gamma^{-1}v_x\mathbbm{1}_{U\setminus U_\gamma}(x),\]
which in particular implies that the size of the first derivative is of the order $\gamma^{-1}$,
\[|\nabla\iota_\gamma(x)|=\gamma^{-1}\mathbbm{1}_{U\setminus U_\gamma}(x).\]
\end{lemma}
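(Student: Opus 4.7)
The plan is to justify the stated formula in three steps: first, secure a tubular neighbourhood of $\partial U$ on which the nearest-point projection is single-valued; second, compute the pointwise gradient of the distance function $d(\cdot,\partial U)$ there; and third, apply the chain rule to the truncation $\iota_\gamma=\gamma^{-1}(d(\cdot,\partial U)\wedge\gamma)$.

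Step one is purely geometric. For a bounded $C^2$ domain, the classical tubular neighbourhood result (this is exactly the content of the cited reference \cite{foote1984regularity}) furnishes a threshold $\gamma_U>0$ such that every $x$ with $d(x,\partial U)<\gamma_U$ admits a unique closest boundary point $\Pi_{\partial U}(x)$ and the map $x\mapsto\Pi_{\partial U}(x)$ is continuous on this neighbourhood. This is exactly the $\gamma_U$ defined in the statement, and from here on $\gamma$ is fixed in $(0,\gamma_U)$.

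For step two, fix $x\in U$ with $d(x,\partial U)<\gamma_U$, set $x^*=\Pi_{\partial U}(x)$, and let $v_x=(x-x^*)/|x-x^*|$. I would use a sandwich argument: the upper bound
\[ d(x+h,\partial U)\leq|x+h-x^*|=d(x,\partial U)+v_x\cdot h+O(|h|^2) \]
is immediate since $x^*\in\partial U$. For the matching lower bound, let $y^*_h=\Pi_{\partial U}(x+h)$; continuity of the projection gives $y^*_h\to x^*$ as $h\to 0$, and a short Taylor expansion using $(x-y^*_h)/|x-y^*_h|\to v_x$ yields $d(x+h,\partial U)=|x+h-y^*_h|\geq d(x,\partial U)+v_x\cdot h-o(|h|)$. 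Consequently $\nabla d(x,\partial U)=v_x$, and because $x-x^*$ is normal to $\partial U$ at $x^*$ (first-order optimality) and points into $U$, the vector $v_x$ coincides with the inward unit normal at $x^*$.

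Step three combines these via the chain rule for the minimum. Being Lipschitz, $\iota_\gamma$ admits a weak gradient: on $\{d(\cdot,\partial U)>\gamma\}$ it vanishes, on $\{d(\cdot,\partial U)<\gamma\}$ (which is contained in the tubular neighbourhood since $\gamma<\gamma_U$) it equals $\gamma^{-1}\nabla d(\cdot,\partial U)=\gamma^{-1}v_x$, and the interface $\partial U_\gamma$ has Lebesgue measure zero. This delivers the stated expression for $\nabla\iota_\gamma$, and $|v_x|=1$ yields $|\nabla\iota_\gamma|=\gamma^{-1}\mathbbm{1}_{U\setminus U_\gamma}$. The main subtlety is the differentiability of $d(\cdot,\partial U)$ at a given interior point: this genuinely requires the $C^2$ assumption, since a merely $C^1$ boundary need not admit a positive-radius tubular neighbourhood of unique projections. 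One could alternatively sidestep the pointwise gradient by working with weak derivatives of the Lipschitz function $\iota_\gamma$ throughout, which is all that is ultimately used in the later integration-by-parts arguments.
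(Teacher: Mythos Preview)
Your proof is correct. The paper explicitly presents this lemma without proof, merely citing \cite{foote1984regularity} for the existence of the tubular neighbourhood and then stating the gradient formula; your argument fills in the standard details (the sandwich estimate for $\nabla d(\cdot,\partial U)$ and the chain rule for the truncation) in a way fully consistent with that sketch.
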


\subsection{Uniqueness proof}\label{section 3.3 Uniqueness proof}
Before proving the uniqueness theorem, we need to prove an integration by parts lemma against the kinetic function.
Since we will only deal with test functions that are compactly supported in space, the statement reads the same as the torus, see \textcolor{red}{\cite[Lem.~4.4]{fehrman2024well}}.
The lemma is an equality of vectors which formalises the distributional equality $\nabla_x\chi=\delta_0(\xi-\rho)\nabla\rho$ (an equality that is satisfied when both sides are integrated).
We will later use the lemma with test function being the convolution kernel $\psi= \kappa^{\epsilon,\delta}$.

\begin{lemma}[Integration by parts against kinetic function]\label{lemma integration by parts against kinetic function}
   Let $\psi\in C_c^\infty(U\times(0,\infty))$ be a compactly supported test function (in both arguments) and $\chi$ the kinetic function as defined in Definition \ref{definition kinetic measure}. Then
\[\int_\mathbb{R}\int_{U}\nabla_x\psi(x,\xi)\chi(x,\xi,r)\,dx\,d\xi=-\int_{U}\psi(x,\rho(x,r))\nabla\rho(x,r)\,dx.\]
\end{lemma}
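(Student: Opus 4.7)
The plan is to rewrite the left-hand side as the integral of the gradient of a single auxiliary function that has compact support in $U$, so the identity reduces to the divergence theorem applied to a compactly supported function. Since $\psi\in C_c^\infty(U\times(0,\infty))$, there exist constants $0<a<b<\infty$ and a compact $K\subset U$ with $\mathrm{supp}(\psi)\subset K\times[a,b]$. On this region $\chi(x,\xi,r)=\mathbbm{1}_{\{\rho(x,r)\geq\xi\}}$, so only the values of $\rho$ in $[a,b]$ matter; by the second point of Definition \ref{definition of stochastic kinetic solution of generalised Dean--Kawasaki equation}, the strict monotonicity of $\Phi$ (Assumption \ref{assumptions on coefficients for uniqueness}), and Remark \ref{remark saying rho is locallhy h1}, the truncation $(\rho(\cdot,r)\wedge b)\vee(a/2)$ lies in $H^1_{loc}(U)$.

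Next, define the auxiliary scalar function
\[F(x):=\int_\mathbb{R}\chi(x,\xi,r)\psi(x,\xi)\,d\xi=\int_0^{\rho(x,r)}\psi(x,\xi)\,d\xi.\]
By construction $F$ is supported in $K$, hence compactly inside $U$. Applying the chain rule for Sobolev compositions to the upper limit of integration (and the Leibniz rule under the integral for the $x$-dependence of $\psi$) yields
\[\nabla F(x)=\psi(x,\rho(x,r))\nabla\rho(x,r)+\int_0^{\rho(x,r)}\nabla_x\psi(x,\xi)\,d\xi=\psi(x,\rho(x,r))\nabla\rho(x,r)+\int_\mathbb{R}\chi(x,\xi,r)\nabla_x\psi(x,\xi)\,d\xi.\]
Integrating this identity over $U$ and noting that $\int_U\nabla F\,dx=0$ since $F$ is compactly supported in $U$ then gives the lemma after rearranging.

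The main technical obstacle is justifying the chain rule when $\rho$ possesses only local Sobolev regularity and $\chi$ contains a Heaviside jump at the upper limit of integration. I would handle this by approximation: replace the indicator $\mathbbm{1}_{\{\rho\geq\xi\}}$ by $H_\eta(\rho(x,r)-\xi)$ for a smooth increasing approximation $H_\eta$ of the Heaviside function, so that $H_\eta(\rho-\xi)\psi(x,\xi)$ is an $H^1$ function of $x$ to which the classical chain rule applies. This yields the integration-by-parts identity above with $\chi$ replaced by its regularisation, and the $H^1_{loc}$ regularity of $(\rho\wedge b)\vee(a/2)$ ensures the resulting gradient terms are locally integrable and uniformly controlled in $\eta$. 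Passing $\eta\to 0$, dominated convergence, with all integrands bounded and supported in the fixed compact set $K\times[a,b]$, closes the argument.
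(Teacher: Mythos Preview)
Your proof is correct and is essentially the same argument as the paper's: your auxiliary function $F(x)=\int_0^{\rho(x,r)}\psi(x,\xi)\,d\xi$ is precisely the paper's $\Psi(x,\rho(x,r))$ where $\Psi$ is the $\xi$-antiderivative of $\psi$ with $\Psi(\cdot,0)=0$, and both proofs reduce to the chain rule for $\nabla F$ together with $\int_U\nabla F=0$ by compact support. The only difference is that you supply an explicit regularisation of the Heaviside to justify the Sobolev chain rule, whereas the paper leaves this step implicit.
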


\begin{proof}
 Let $\Psi:U\times(0,\infty)\to \mathbb{R}$ be a function satisfying $\partial_\xi \Psi(x,\xi)=\psi(x,\xi)$, $\Psi(x,0)=0$.
      By the definition of kinetic function, for any $r\in[0,T]$ we have
    \begin{align*}
\int_{U}\int_\mathbb{R}\nabla_x\psi(x,\xi)\chi(x,\xi,r)\,d\xi\,dx&=\int_{U}\int_0^{\rho(x,r)}\nabla_x\psi(x,\xi)\,d\xi\,dx\\
    &=\int_{U}\int_0^{\rho(x,r)}\partial_\xi\nabla_x\Psi(x,\xi)\,d\xi\,dx\\
    &=\int_{U}\nabla_x\Psi(x,\rho(x,r))-\nabla_x\Psi(x,0)\,dx\\
    &=\int_{U}\nabla\Psi(x,\rho(x,r))-\partial_\xi\Psi(x,\rho(x,r))\nabla\rho(x,r)\,dx\\
    &=\int_{\partial U}\Psi(x,\rho(x,r))\cdot\hat{\eta}\,dx
    -\int_{U}\psi(x,\rho(x,r))\nabla\rho(x,r)\,dx,
    \end{align*}
    where the final equality is due to the divergence theorem. 
       Note that the above is an equality of vectors, the first term on the right hand side in the final line denotes a vector where the \textcolor{red}{$i^{\text{th}}$} component is the function $\Psi$ dotted with the \textcolor{red}{$i^{\text{th}}$} direction outward pointing unit normal $\eta_i$, and it vanishes due to the compact support of $\psi$. 
          In the remainder of the paper we write $\hat{\eta}=(\hat{\eta}_i)_{i=1}^d$ to denote the outward pointing unit normal at the boundary $\partial U$.
\end{proof}

We are now in a position to prove the uniqueness of stochastic kinetic solutions of \eqref{generalised Dean--Kawasaki equation Ito}. 
For the proof we will assume the following decay of the kinetic measure at zero, which is proved in Section \ref{section 4.3 decay of kinetic measure at zero}.
  \begin{equation}\label{equation decay of kinetic measure}
\liminf_{\beta\to0}\left(\beta^{-1}q(U\times[\beta/2,\beta]\times[0,T])\right)=0.
  \end{equation}
\textcolor{red}{The proof is via an entropy estimate, and therefore in the theorem below we require the initial condition to live in the entropy space, defined analogously to \cite[Def.~5.16, Asm. 5.17,]{fehrman2024well}, see also Remark 5.15 there.
\begin{definition}[Entropy space] \label{definition entropy space}
    The space of non-negative, $L^1(U)$ functions with finite entropy is the space
    \[Ent(U):=\left\{\rho\in L^1(U): \rho\geq0 \hspace{3pt}\text{almost everywhere, with}\hspace{3pt}\int_U\rho \log(\rho)<\infty\right\}.\]
    We say that a function $\rho:\Omega\to L^1(U)\cap Ent(U)$ is in the space $L^1(\Omega;Ent(U))$ if $\rho$ is $\mathcal{F}_0$ measurable and 
    \[\mathbb{E} \left[\|\rho\|_{L^1(U)}+\int_U\rho \log(\rho)\right]<\infty.\]
\end{definition}}
\begin{theorem}\label{main uniqueness theorem}
Suppose that the coefficients $\Phi,\sigma,\nu$ of equation \eqref{generalised Dean--Kawasaki equation Ito} and the coefficients of noise $\xi^F$ satisfy Assumptions \ref{Assumption on noise Fi}, \ref{assumptions on coefficients for uniqueness}.
Suppose $\rho^1$ and $\rho^2$ are two stochastic kinetic solutions of \eqref{generalised Dean--Kawasaki equation Ito} in the sense of Definition \ref{definition of stochastic kinetic solution of generalised Dean--Kawasaki equation}, \textcolor{red}{with kinetic measures $q^1,q^2$ both satisfying the decay \eqref{equation decay of kinetic measure} and} $\mathcal{F}_0-$measurable initial data $\rho_0^1,\rho_0^2\in L^1(\Omega;Ent(U))$ respectively. 
Then almost surely
\[\sup_{t\in[0,T]}\|\rho^1(\cdot,t)-\rho^2(\cdot,t)\|_{L^1(U)}\leq \|\rho_0^1-\rho_0^2\|_{L^1(U)}.\]
\end{theorem}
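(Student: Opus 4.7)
My plan is to adapt the doubling-of-variables kinetic argument of \cite[Thm.~4.7]{fehrman2024well} by inserting the spatial cutoff $\iota_\gamma$ into every test function, quantify all error terms that arise when a gradient falls on $\iota_\gamma$, and take limits in the order $\epsilon,\delta\to 0$, then $\beta\to 0$, then $M\to\infty$, and finally $\gamma\to 0$. The aim is to derive, for almost every $\omega$, an inequality of the form
\[
\int_U |\rho^1(\cdot,t)-\rho^2(\cdot,t)|\,\iota_\gamma\,dx \;\le\; \int_U|\rho^1_0-\rho^2_0|\,\iota_\gamma\,dx + R_\gamma(t),
\]
where $R_\gamma(t)$ collects precisely the terms produced by $\nabla\iota_\gamma$, and then to show $R_\gamma(t)\to 0$ pathwise as $\gamma\to 0$. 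Monotone convergence $\iota_\gamma\uparrow 1$ on $U$ then yields the claimed contraction, using the identity $|\rho^1-\rho^2|=\int_\mathbb{R}(\chi^1(1-\chi^2)+\chi^2(1-\chi^1))\,d\xi$ for $\{0,1\}$-valued kinetic functions.

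\textbf{Main computation.} First I would substitute $\psi(x,\xi)=\kappa^\epsilon_d(x-y)\kappa^\delta_1(\xi-\eta)\phi_\beta(\xi)\zeta_M(\xi)\iota_\gamma(x)$ into the kinetic equation \eqref{kinetic equation} for $\rho^1$ and pair it against $1-\chi^2(y,\eta,t)$, then symmetrically for $\rho^2$. Since $\phi_\beta\zeta_M\iota_\gamma$ may be approximated by elements of $C_c^\infty(U\times(0,\infty))$, this is a legitimate test function. Applying the It\^o product rule to the resulting semimartingale $\chi^1(x,\xi,t)(1-\chi^2(y,\eta,t))$, integrating in $(y,\eta)$, and taking expectation so that the stochastic integrals vanish, one obtains a balance containing six classes of terms: (a)~parabolic dissipation $\Phi'(\rho^i)|\nabla\rho^i|^2\delta_0(\xi-\rho^i)$; (b)~the kinetic defect measures $q^i$; (c)~Stratonovich corrections with factors $F_1[\sigma'(\rho)]^2\nabla\rho$ and $\sigma'(\rho)\sigma(\rho)F_2$; (d)~cross It\^o quadratic variations between $\chi^1$ and $\chi^2$; (e)~drift contributions from $\nu$; and (f)~the new terms arising whenever $\nabla_x$ hits $\iota_\gamma(x)$ (or $\nabla_y$ hits $\iota_\gamma(y)$). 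The decisive structural cancellation inherited from \cite{fehrman2024well}, and unaffected by working on a bounded domain, is that on the diagonal $x=y$, $\xi=\eta$ the combination of (c) and (d) assembles into perfect squares dominated by (a)+(b) through axiom~3 of Definition~\ref{definition of stochastic kinetic solution of generalised Dean--Kawasaki equation}; this is the reason the Stratonovich formulation is essential.

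\textbf{Limits and main obstacle.} The kernel limits $\epsilon,\delta\to 0$ collapse all four arguments onto the diagonal, using the local $H^1(U)$ regularity of $\rho^i$ guaranteed by Boundary condition~2 of Definition~\ref{definition of stochastic kinetic solution of generalised Dean--Kawasaki equation}, with the entropy space hypothesis $\rho^i_0\in Ent(U)$ absorbing the singular model-case contributions through the estimates of Section~\ref{section4.2 entropy estimate}. The limit $\beta\to 0$ uses the decay \eqref{equation decay of kinetic measure} to discard the portion of the integrand supported in $\{\beta/2\le\xi\le\beta\}$, and $M\to\infty$ is handled by axiom~4 together with the growth assumptions (4)--(5) of Assumption~\ref{assumptions on coefficients for uniqueness}. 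Up to this point the argument follows \cite{fehrman2024well} line by line; all novelty is isolated in $R_\gamma(t)$, and the main obstacle will be showing $R_\gamma(t)\to 0$. By Lemma~\ref{remark derivative of spatial cutoff}, each term in $R_\gamma(t)$ is an integral over $U\setminus U_\gamma$ carrying the prefactor $\gamma^{-1}$ paired with normal fluxes such as $\Phi'(\rho^i)\nabla\rho^i\cdot v_x$, $F_1[\sigma'(\rho^i)]^2\nabla\rho^i\cdot v_x$, $\sigma'(\rho^i)\sigma(\rho^i)F_2\cdot v_x$, or $\nu(\rho^i)\cdot v_x$. Since $U$ is $C^2$, the coarea formula rewrites these as averages of boundary integrals over $\{d(\cdot,\partial U)=s\}$ for $s\in(0,\gamma)$, and $|U\setminus U_\gamma|=O(\gamma)$ compensates the $\gamma^{-1}$ prefactor provided the fluxes admit suitable boundary traces. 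The shared boundary datum is decisive here: Boundary condition~2 forces $\Phi(\rho^1)-\Phi(\rho^2)$ to lie in $H^1_0(U)$ after truncation, so its trace vanishes and the leading differences cancel; the remaining absolute contributions are controlled by the uniform $L^2(\Omega;L^2([0,T];H^1))$ a priori bounds on the truncated $\Phi(\rho^i)$ from Section~\ref{section 4.1 a priori estimates for regularised equation} together with continuity of $F_1,F_2,F_3$. This is precisely where the imposed finiteness of the boundary energy terms restricts the admissible class of $\bar f$.
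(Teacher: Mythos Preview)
Your overall strategy is correct and largely tracks the paper's approach, but there are two genuine gaps.

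\textbf{Order of limits.} You propose $\epsilon,\delta\to 0$, then $\beta\to 0$, then $M\to\infty$, and only at the very end $\gamma\to 0$. This does not work. The boundary-layer terms in $R_\gamma$ can only be controlled while the velocity cutoffs $\phi_\beta,\zeta_M$ are still active: the relevant Lipschitz constants (of $\xi\mapsto\sigma(\xi)\sigma'(\xi)\phi_\beta(\xi)\zeta_M(\xi)$ and of the analogous $\Theta_{M,\beta}$, $\Psi_{M,\beta,\nu}$) blow up as $\beta\to 0$ or $M\to\infty$, and the only $H^1$ regularity available from Definition~\ref{definition of stochastic kinetic solution of generalised Dean--Kawasaki equation} is for the \emph{truncated} quantity $(\Phi(\rho)\wedge k)\vee 1/k$, with norms depending on $k$. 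Once $\beta,M$ are gone you are left with the full singular flux $F_1(\sigma'(\rho))^2\nabla\rho$ near the boundary, which in the model case is $\nabla\log\rho$ and need not even be locally integrable (Remark~\ref{remark highlighting singularity in ito correction}). The paper therefore takes the limits in the order $\epsilon,\delta\to 0$, then $\gamma\to 0$, and only afterwards $\beta\to 0$, $M\to\infty$; this ordering is essential and is stated explicitly in the proof.

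\textbf{The $\nabla\iota_\gamma$ terms with gradients do not vanish---they have a sign.} Your plan for $R_\gamma$ is to use the coarea formula plus $|U\setminus U_\gamma|=O(\gamma)$ to balance the $\gamma^{-1}$, together with the vanishing trace of $\Phi(\rho^1)-\Phi(\rho^2)$. But $O(\gamma)\cdot\gamma^{-1}=O(1)$, not $o(1)$, so this alone gives nothing; and the presence of the factor $\mathrm{sgn}(\rho^2-\rho^1)$ prevents you from simply integrating $\nabla(\Phi(\rho^1)-\Phi(\rho^2))\cdot v_x$ to a trace. The paper's key observation, which you are missing, is that because $\Phi_{M,\beta}$ is monotone one has $\mathrm{sgn}(\rho^2-\rho^1)=\mathrm{sgn}(\Phi_{M,\beta}(\rho^2)-\Phi_{M,\beta}(\rho^1))$, so
\[
\big(\nabla\Phi_{M,\beta}(\rho^2)-\nabla\Phi_{M,\beta}(\rho^1)\big)\,\mathrm{sgn}(\rho^2-\rho^1)
=\nabla\big|\Phi_{M,\beta}(\rho^2)-\Phi_{M,\beta}(\rho^1)\big|.
\]
Integrating this against $\nabla\iota_\gamma=\gamma^{-1}v_y\mathbbm{1}_{U\setminus U_\gamma}$ via the fundamental theorem of calculus in the normal direction produces two boundary integrals: one over $\partial U$, which vanishes because the solutions share boundary data, and one over $\partial U_\gamma$, which has a \emph{sign} (it is $\le 0$) for every fixed $\gamma>0$. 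The same argument handles the $F_1(\sigma')^2\nabla\rho$ term. These contributions are therefore discarded by inequality, not shown to converge to zero. Only the non-gradient pieces (the $\sigma\sigma'F_2$ term, and the analogous terms appearing after integration by parts in the martingale and conservative parts) are actually shown to vanish as $\gamma\to 0$, and there the mechanism is a $\gamma^{1/2}$ gain coming from $\|\nabla\rho^i_{M,\beta}\|_{L^2(U)}$---again requiring $M,\beta$ to be held fixed while $\gamma\to 0$.

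A minor additional point: taking expectation at the outset, as you propose, would yield only $\mathbb{E}\|\rho^1-\rho^2\|_{L^1}\le\mathbb{E}\|\rho^1_0-\rho^2_0\|_{L^1}$. The paper keeps the argument pathwise; the martingale term is shown to vanish almost surely along a subsequence via the Burkholder--Davis--Gundy inequality.
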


\begin{remark}
    \begin{itemize}
        \item Note that the pathwise contraction property in the equation above implies the pathwise continuity of solutions with respect to the initial condition. 
        This is a stronger result than the uniqueness of equation \eqref{generalised Dean--Kawasaki equation Ito}.
        \item The proof follows along the same lines as on the torus, see \textcolor{red}{\cite[Thm.~4.7]{fehrman2024well}}. 
        Hence in the proof below we omit the majority of the bounds that just follow from there, instead focusing our attention on the new terms arising as a result of having to include the spatial cutoff $\iota_\gamma$ as part of the test function. 
        \item In the below proof and throughout the paper we use $c$ to denote a running constant and not specify what it depends on unless it is important.
    \end{itemize}
\end{remark}

\begin{proof}
    Let $\chi^1,\chi^2$ be the kinetic functions of $\rho^1,\rho^2$ respectively.
    For every $\epsilon,\delta\in(0,1)$, $i\in\{1,2\}$ and $\kappa^{\epsilon,\delta}$ as in Definition \ref{definition of convolution kernels and cutoff functions} define the smoothed kinetic functions
\[\chi^{\epsilon,\delta}_{t,i}(y,\eta):=(\chi^i(\cdot,\cdot,t)\ast \kappa^{\epsilon,\delta})(y,\eta), \hspace{5pt} t\in[0,T],y\in U, \eta\in\mathbb{R}.\]
We have by definition of convolution kernels that for $x,y\in U$ and for $\xi,\eta\in\mathbb{R}$
\[\nabla_x \kappa^\epsilon_d(y-x) = -\nabla_y \kappa^\epsilon_d(y-x), \hspace{10pt} \partial_\xi\kappa^\delta_1 (\eta-\xi)=-\partial_\eta\kappa^\delta_1 (\eta-\xi).\]
This implies, as a result of the kinetic equation \eqref{kinetic equation}, that for every $\epsilon,\delta\in(0,1)$ there is a subset of full probability such that we have for every $i\in\{1,2\}$, $t\in[0,T]$ and $(y,\eta)\in U_{2\epsilon}\times(2\delta,\infty)$ such that the convolution kernel is compactly supported
\small
\begin{align}\label{long expression for chi in uniqueness proof}
&\left.\chi^{\epsilon,\delta}_{s,i}(y,\eta)\right|_{s=0}^t:=\left.(\chi^i(\cdot,\cdot,s)\ast \kappa^{\epsilon,\delta})(y,\eta)\right|_{s=0}^t :=\left. \int_{\mathbb{R}}\int_U \chi^i(x,\xi,s)\kappa^{\epsilon,\delta}(y,x,\eta,\xi)\,dx\,d\xi\right|_{s=0}^t\nonumber\\
     &=\nabla_y\cdot\left(\int_0^t \int_{U}\left(\Phi'(\rho^i)\nabla(\rho^i)+\frac{1}{2}F_1[\sigma'(\rho^i)]^2\nabla\rho^i +\frac{1}{2} \sigma'(\rho^i)\sigma(\rho^i)F_2\right)\kappa^{\epsilon,\delta}(y,x,\eta,\rho^i)\,dx\,ds\right)\nonumber\\
    &+d_\eta\left(\int_0^t \int_{\mathbb{R}}\int_{U}\kappa^{\epsilon,\delta}(y,x,\eta,\xi)\,dq^i\right) -\frac{1}{2}\partial_\eta\left(\int_0^t \int_{U}\left(\sigma'(\rho^i)\sigma(\rho^i)\nabla\rho^i\cdot F_2 + \sigma(\rho^i)^2F_3 \right)\kappa^{\epsilon,\delta}(y,x,\eta,\rho^i)\,dx\,ds\right)\nonumber\\
    & -\int_0^t \int_{U}\kappa^{\epsilon,\delta}(x,y,\rho,\eta)\nabla\cdot (\sigma(\rho^i) \,d{\xi}^F)\,dx -        \int_0^t \int_{U}\kappa^{\epsilon,\delta}(x,y,\rho,\eta)\nabla\cdot\nu(\rho^i)\,dx\,ds.
\end{align}
\normalsize
To find an expression of the difference in solutions, we want to deal with a regularised version of
\small
 \begin{align}\label{rigorous equation for difference of solutions in uniqueness proof}
 &\left.\int_{U}|\rho^1(x,s)-\rho^2(x,s)|\,dx\right|_{s=0}^t=\left.\int_{U} \int_{\mathbb{R}}\chi^1(\xi,\rho(x,s))+\chi^2(\xi,\rho(x,s)) -2\chi^1(\xi,\rho(x,s))\chi^2(\xi,\rho(x,s))\,d\xi\,dx\right|_{s=0}^t.
 \end{align}
 \normalsize
  We begin by treating the regularised version of the first two terms on the right hand side of equation \eqref{rigorous equation for difference of solutions in uniqueness proof}.
  Testing equation \eqref{long expression for chi in uniqueness proof} against smooth approximations of the product of cutoff functions $\zeta_M\phi_\beta \iota_\gamma$, which are smooth and compactly supported, and subsequently taking the limit of the approximations yields 
  \small
\begin{align*}
    &\left.\int_{\mathbb{R}}\int_{U}\chi^{\epsilon,\delta}_{s,i}(y,\eta)\zeta_M(\eta)\phi_\beta(\eta) \iota_\gamma(y)\,dy\,d\eta\right|_{s=0}^t=\\
     &-\int_{\mathbb{R}}\int_0^t \int_{U^2}\left(\Phi'(\rho^i)\nabla\rho^i +\frac{1}{2}F_1[\sigma'(\rho^i)]^2\nabla\rho^i +\frac{1}{2} \sigma'(\rho^i)\sigma(\rho^i)F_2\right) \kappa^{\epsilon,\delta}(y,x,\eta,\rho^i)\zeta_M(\eta)\phi_\beta(\eta) \cdot\nabla\iota_\gamma\,dy\,dx\,ds\,d\eta\nonumber\\
    &-\int_0^t \int_{\mathbb{R}^2}\int_{U^2}\kappa^{\epsilon,\delta}(y,x,\eta,\xi)\partial_\eta(\zeta_M(\eta)\phi_\beta(\eta)) \iota_\gamma(y)\,dq^i\,dy\,d\eta\\ 
    &+\frac{1}{2}\int_{\mathbb{R}}\int_0^t \int_{U^2}\left(
    \sigma'(\rho^i)\sigma(\rho^i)\nabla\rho^i\cdot F_2 + \sigma(\rho^i)^2F_3 \right)\kappa^{\epsilon,\delta}(y,x,\eta,\rho^i)\partial_\eta(\zeta_M(\eta)\phi_\beta(\eta)) \iota_\gamma(y)\,dy\,dx\,ds\,d\eta\nonumber\\
    & -\int_{\mathbb{R}}\int_0^t \int_{U^2}\zeta_M(\eta)\phi_\beta(\eta) \iota_\gamma(y)\kappa^{\epsilon,\delta}(y,x,\eta,\rho^i)\nabla\cdot (\sigma(\rho^i) \,d{\xi}^F)\,dy\,dx\,ds\,d\eta\\
    &-        \int_{\mathbb{R}}\int_0^t \int_{U^2}\zeta_M(\eta)\phi_\beta(\eta) \iota_\gamma(y)\kappa^{\epsilon,\delta}(y,x,\eta,\rho^i)\nabla\cdot\nu(\rho^i)\,dy\,dx\,ds\,d\eta.
\end{align*}
\normalsize
For convenience we split this up into three parts,
\begin{align*}    \left.\int_{\mathbb{R}}\int_{U}\chi^{\epsilon,\delta}_{s,i}(y,\eta)\zeta_M(\eta)\phi_\beta(\eta) \iota_\gamma(y)\,dy\,d\eta\right|_{s=0}^t= I^{i,cut}_t+I^{i,mart}_t+I^{i,cons}_t,
\end{align*}
with the cutoff term being the first three lines on the right hand side, the martingale term being the noise term on the fourth line and the conservative term being the term in the final line.
Note in particular that the first terms on the right hand side containing the derivative of the spatial cutoff $\nabla\iota_\gamma$ is a new term compared to the torus case  which a priori diverges like $\gamma^{-1}$.\\
To obtain an expression for the final term (the mixed term) in \eqref{rigorous equation for difference of solutions in uniqueness proof}, we introduce the notation $(x,\xi)\in U \times \mathbb{R}$ for arguments in $\chi^1$ and related quantities and $(x',\xi')\in U \times \mathbb{R}$ for arguments of $\chi^2$ and related quantities. For brevity we also introduce the notation
\[\bar{k}_{s,1}^{\epsilon,\delta}(x,y,\eta):=\kappa^{\epsilon,\delta}(x,y,\eta,\rho^1(x,s)), \hspace{15pt} \bar{k}_{s,2}^{\epsilon,\delta}(x',y,\eta):=\kappa^{\epsilon,\delta}(x',y,\eta,\rho^2(x',s)).\]
In the below computations, since we smoothed the kinetic function, we are allowed to use it as part of an admissible test function.
The stochastic product rules tells us that almost surely we have, for $\beta\in(0,1)$, $M\in\mathbb{N}$, $\gamma$ sufficiently small depending on the domain $U$, $\delta\in(0,\beta/4)$, $\epsilon\in(0,\gamma/4)$,
\small
\begin{align}\label{equation for stochastic product rule for uniqueness proof}   &\left.\int_{\mathbb{R}}\int_{U}\chi^{\epsilon,\delta}_{s,1}(y,\eta) \chi^{\epsilon,\delta}_{s,2}(y,\eta) \zeta_M(\eta)\phi_\beta(\eta) \iota_\gamma(y)\,dy\,d\eta\right|_{s=0}^t\nonumber\\
    &=\int_0^t\int_{\mathbb{R}}\int_U \left(\chi^{\epsilon,\delta}_{s,1}(y,\eta) d\chi^{\epsilon,\delta}_{s,2}(y,\eta) +  \chi^{\epsilon,\delta}_{s,2}(y,\eta) d\chi^{\epsilon,\delta}_{s,1}(y,\eta)+ 
    d\langle\chi^{\epsilon,\delta}_{1},\chi^{\epsilon,\delta}_{1}\rangle_s(y,\eta)\right)\zeta_M(\eta)\phi_\beta(\eta) \iota_\gamma(y)\,dy\,d\eta \nonumber\\
    &=\int_{\mathbb{R}}\int_U \left(\chi^{\epsilon,\delta}_{s,1}(y,\eta) \left.\left[\chi^{\epsilon,\delta}_{s,2}(y,\eta)\right|_{s=0}^t\right] +  \chi^{\epsilon,\delta}_{s,2}(y,\eta) \left.\left[\chi^{\epsilon,\delta}_{s,1}(y,\eta)\right|_{s=0}^t\right]\right.\nonumber\\
    &\hspace{160pt}+\left.
\left.\left[\langle\chi^{\epsilon,\delta}_{1},\chi^{\epsilon,\delta}_{2}\rangle_s(y,\eta)\right|_{s=0}^t\right] \right) \zeta_M(\eta)\phi_\beta(\eta) \iota_\gamma(y)\,dy\,d\eta.
    \end{align}
    \normalsize
Using equation \eqref{long expression for chi in uniqueness proof} we can write the first term on the final line of \eqref{equation for stochastic product rule for uniqueness proof} as
\small
\begin{align*}
    &\int_{\mathbb{R}}\int_U   \chi^{\epsilon,\delta}_{s,1}(y,\eta) \left.\left[\chi^{\epsilon,\delta}_{s,2}(y,\eta)\right]\right|_{s=0}^t\zeta_M(\eta)\phi_\beta(\eta) \iota_\gamma(y)\,dy\,d\eta\\
    &= \int_{\mathbb{R}}\int_U   \zeta_M(\eta)\phi_\beta(\eta) \iota_\gamma(y)\chi^{\epsilon,\delta}_{s,1}(y,\eta)\left[\nabla_y\cdot \left(\int_0^t\int_{U}\left(\Phi'(\rho^2)\nabla\rho^2\right)\bar{k}^{\epsilon,\delta}_{s,2}\,dx'\,ds\right)\nonumber\right.\\
    &+ \nabla_y \cdot\left(\int_0^t\int_{U}\left(\frac{1}{2}F_1[\sigma'(\rho^2)]^2\nabla\rho^2 +\frac{1}{2} \sigma'(\rho^2)\sigma(\rho^2)F_2\right)\bar{k}^{\epsilon,\delta}_{s,2}\,dx'\,ds\right)\nonumber\\
    &+\partial_\eta \left(\int_0^t\int_{\mathbb{R}}\int_{U}\kappa^{\epsilon,\delta}(x',y,\xi,\eta)\,dq^2\right)-\frac{1}{2}\partial_\eta \left(\int_0^t\int_{U}\left(
    \sigma'(\rho^2)\sigma(\rho^2)\nabla\rho^2\cdot F_2 + \sigma(\rho^2)^2F_3 \right)\bar{k}^{\epsilon,\delta}_{s,2}\,dx'\right)\,ds\nonumber\\
    &\left. - \int_0^t\int_{U}\bar{k}^{\epsilon,\delta}_{s,2}\nabla\cdot (\sigma(\rho^2) \,d{\xi}^F)\,dx' -         \int_0^t\int_{U}\bar{k}^{\epsilon,\delta}_{s,2}\nabla\cdot\nu(\rho^2)\,dx'\,ds\right]\,dy\,d\eta.
\end{align*} 
\normalsize
We integrate by parts and move derivatives onto (smooth approximations of) the product\\ $\zeta_M(\eta)\phi_\beta(\eta)\iota_\gamma(y)\chi^{\epsilon,\delta}_{s,1}(y,\eta)$, which are smooth, compactly supported so can be done using classical integration by parts. 
We use the product rule when integrating in $y$ then the integration by parts lemma, Lemma \ref{lemma integration by parts against kinetic function} noting the convolution kernel is compactly supported since $y,\eta\in U_{2\epsilon}\times(2\delta,\infty)$:
\begin{align*}
    \nabla_y \chi^{\epsilon,\delta}_{s,1}(y,\eta)&:=\int_{\mathbb{R}}\int_U \chi^i(x,\xi,s)\nabla_y\kappa^{\epsilon,\delta}(y,x,\eta,\xi)\,dx\,d\xi\\
    &=-\int_{\mathbb{R}}\int_U \chi^i(x,\xi,s)\nabla_x\kappa^{\epsilon,\delta}(y,x,\eta,\xi)\,dx\,d\xi\\
    &=-\int_{U}\bar{k}^{\epsilon,\delta}_{s,i}\nabla\rho^i(x,r)\,dx,
\end{align*}
to obtain the decomposition 
\begin{align*}
   \int_0^t\int_{\mathbb{R}}\int_U  & \chi^{\epsilon,\delta}_{s,1}(y,\eta) d\chi^{\epsilon,\delta}_{s,2}(y,\eta)\zeta_M(\eta)\phi_\beta(\eta) \iota_\gamma(y)\,dy\,d\eta\\
&=I_t^{1,2,err}+I_t^{1,2,meas}+I_t^{1,2,cut}+I_t^{1,2,mart}+I_t^{1,2,cons}.
\end{align*}
Adding the term \[\int_0^t\int_{\mathbb{R}}\int_{U^3}[\Phi'(\rho^1)]^{1/2}[\Phi'(\rho^2)]^{1/2}\nabla\rho^1\cdot\nabla\rho^2\bar{k}^{\epsilon,\delta}_{s,1}\bar{k}^{\epsilon,\delta}_{s,2}\phi_\beta(\eta)\zeta_M(\eta)\iota_\gamma(y)\,dx\,dx'\,dy\,d\eta\,ds\] to the error term and taking it away from the measure term gives for each term separately (note below that we get an extra spatial and an extra real integral due to the definition of convolution in $\chi^{\epsilon,\delta}$):
\small
\begin{align*}
    &I_t^{1,2,err}=\\
    &-\int_0^t\int_{\mathbb{R}}\int_{U^3}  \zeta_M(\eta)\phi_\beta(\eta) \iota_\gamma(y)\Phi'(\rho^2)\nabla\rho^2\cdot\nabla\rho^1\bar{k}^{\epsilon,\delta}_{s,1}\bar{k}^{\epsilon,\delta}_{s,2}\,dx\,dx'\,dy\,d\eta\,ds\\
     &-\int_0^t\int_{\mathbb{R}}\int_{U^3}  \zeta_M(\eta)\phi_\beta(\eta) \iota_\gamma(y)\left(\frac{1}{2}F_1[\sigma'(\rho^2)]^2\nabla\rho^2\cdot\nabla\rho^1 +\frac{1}{2} \sigma'(\rho^2)\sigma(\rho^2)F_2\cdot\nabla\rho^1\right)\bar{k}^{\epsilon,\delta}_{s,1}\bar{k}^{\epsilon,\delta}_{s,2}\,dx\,dx'\,dy\,d\eta\,ds\\
    &-\frac{1}{2}\int_0^t\int_{\mathbb{R}}\int_{U^3}   \zeta_M(\eta)\phi_\beta(\eta) \iota_\gamma(y)\left(
    \sigma'(\rho^2)\sigma(\rho^2)\nabla\rho^2\cdot F_2 + \sigma(\rho^2)^2F_3 \right)\bar{k}^{\epsilon,\delta}_{s,2}\bar{k}^{\epsilon,\delta}_{s,1}\,dx\,dx'\,dy\,d\eta\,ds\\
    &+\int_0^t\int_{\mathbb{R}}\int_{U^3}[\Phi'(\rho^1)]^{1/2}[\Phi'(\rho^2)]^{1/2}\nabla\rho^1\cdot\nabla\rho^2\bar{k}^{\epsilon,\delta}_{s,1}\bar{k}^{\epsilon,\delta}_{s,2}\phi_\beta(\eta)\zeta_M(\eta)\iota_\gamma(y)\,dx\,dx'\,dy\,d\eta\,ds,
\end{align*}
\normalsize
measure term
\begin{align*}
    I_t^{1,2,meas}&= \int_0^t\int_{\mathbb{R}^2}\int_{U^3}   \zeta_M(\eta)\phi_\beta(\eta)\iota_\gamma(y)\kappa^{\epsilon,\delta}(x',y,\xi,\eta) \bar{k}^{\epsilon,\delta}_{s,1}\,dq^2(x',\xi,s)\,dx\,dy\,d\eta\\
    &-\int_0^t\int_{\mathbb{R}}\int_{U^3}[\Phi'(\rho^1)]^{1/2}[\Phi'(\rho^2)]^{1/2}\nabla\rho^1\cdot\nabla\rho^2\bar{k}^{\epsilon,\delta}_{s,1}\bar{k}^{\epsilon,\delta}_{s,2}\phi_\beta(\eta)\zeta_M(\eta)\iota_\gamma(y)\,dx\,dx'\,dy\,d\eta\,ds,
\end{align*}
cutoff term defined by
\begin{align*}
    &I^{1,2,cut}_t = -\int_0^t\int_{\mathbb{R}^2}\int_{U^2}   \partial_\eta(\zeta_M(\eta)\phi_\beta(\eta))\chi^{\epsilon,\delta}_{s,1}(y,\eta) \iota_\gamma(y)\kappa^{\epsilon,\delta}(x',y,\xi,\eta)\,dq^2(x',\xi,s)\,dy\,d\eta\\
    &+\frac{1}{2}\int_0^t\int_{\mathbb{R}}\int_{U^2}   \partial_\eta(\zeta_M(\eta)\phi_\beta(\eta)) \chi^{\epsilon,\delta}_{s,1}(y,\eta) \iota_\gamma(y)\left(
    \sigma'(\rho^2)\sigma(\rho^2)\nabla\rho^2\cdot F_2 + \sigma(\rho^2)^2F_3 \right)\bar{k}^{\epsilon,\delta}_{s,2}\,dx'\,dy\,d\eta\,ds\\
    &-\int_0^t\int_{\mathbb{R}}\int_{U^2}  \zeta_M(\eta)\phi_\beta(\eta) \chi^{\epsilon,\delta}_{s,1}(y,\eta)\nabla_y\iota_\gamma(y)\cdot\left(\Phi'(\rho^2)\nabla\rho^2+\frac{1}{2}F_1[\sigma'(\rho^2)]^2\nabla\rho^2 +\frac{1}{2} \sigma'(\rho^2)\sigma(\rho^2)F_2\right)\\
    &\hspace{250pt}\times \bar{k}^{\epsilon,\delta}_{s,2}\,dx'\,dy\,d\eta\,ds,
\end{align*}
martingale term
\begin{align*}
     I^{1,2,mart}_t&=-\int_0^t\int_{\mathbb{R}}\int_{U^2}   \bar{k}^{\epsilon,\delta}_{s,2}\zeta_M(\eta)\phi_\beta(\eta) \iota_\gamma(y)\chi^{\epsilon,\delta}_{s,1}(y,\eta)\nabla\cdot (\sigma(\rho^2) \,d{\xi}^F)\,dx'\,dy\,d\eta,
\end{align*}
and conservative term
\begin{align*}
    I^{1,2,cons}_t&=-\int_0^t\int_{\mathbb{R}}\int_{U^2}   \bar{k}^{\epsilon,\delta}_{s,2}\zeta_M(\eta)\phi_\beta(\eta) \iota_\gamma(y)\chi^{\epsilon,\delta}_{s,1}(y,\eta)\nabla\cdot\nu(\rho^2)\,dx'\,dy\,d\eta\,ds.
\end{align*}
Note again that the challenge arises from the final line of the cutoff term involving the derivative $\nabla\iota_\gamma$.
An analogous decomposition holds for the second term on the right hand side of \eqref{equation for stochastic product rule for uniqueness proof}.
We denote error, measure, cutoff, martingale and conservative terms of the second term up to time $t\in[0,T]$ by $I^{2,1,\cdot}_t$, where we again artificially add an error term and subtract it from the measure term.
Finally we deal with the quadratic variation term in equation \eqref{equation for stochastic product rule for uniqueness proof}.
Let us begin by noticing, using Definition \ref{definition noise} 
 of the noise $\xi^F$, that formally
\begin{align*}
&d\langle\chi^{\epsilon,\delta}_{\cdot,1},\chi^{\epsilon,\delta}_{\cdot,2}\rangle_s(y,\eta):=d\langle(\chi^1\ast \kappa^{\epsilon,\delta}),(\chi^2\ast \kappa^{\epsilon,\delta})\rangle_s(y,\eta)\\
&=\int_{U^2}\int_{\mathbb{R}^2}d\langle \chi^1,\chi^2\rangle_s \kappa^{\epsilon,\delta}_{s,1}(y,x,\eta,\xi)\kappa^{\epsilon,\delta}_{s,2}(y,x',\eta,\xi')\,d\xi\,d\xi'\,dx\,dx'\\
&=\int_{U^2}\int_{\mathbb{R}^2}\delta_0(\xi-\rho^1)\delta_0(\xi'-\rho^2)\nabla\cdot\left(\sigma(\rho^1)\sum_{k=1}^\infty f_k(x)dB^k_s\right)\nabla\cdot\left(\sigma(\rho^2)\sum_{j=1}^\infty f_j(x')dB^j_s\right)\\
&\hspace{220pt}\times
\kappa^{\epsilon,\delta}_{s,1}(x,y,\xi,\eta)\kappa^{\epsilon,\delta}_{s,2}(x',y,\xi',\eta)\,d\xi\,d\xi'\,dx\,dx'\\
&=\sum_{j,k=1}^\infty\int_{U^2}\left(f_k\sigma'(\rho^1)\nabla\rho^1+\sigma(\rho^1)\nabla f_k\right)\left(f_j\sigma'(\rho^2)\nabla\rho^2+\sigma(\rho^2)\nabla f_j\right)d\langle B^k_\cdot,B^j_\cdot\rangle_s\bar{k}^{\epsilon,\delta}_{s,1}\bar{k}^{\epsilon,\delta}_{s,2}dxdx'\\
&=\sum_{k=1}^\infty\int_{U^2}\left(f_k\sigma'(\rho^1)\nabla\rho^1+\sigma(\rho^1)\nabla f_k\right)\left(f_j\sigma'(\rho^2)\nabla\rho^2+\sigma(\rho^2)\nabla f_j\right)\bar{k}^{\epsilon,\delta}_{s,1}\bar{k}^{\epsilon,\delta}_{s,2}\,dx\,dx' \,ds.
\end{align*}
The above can be made rigorous by integrating against smooth approximations of the product $\phi_\beta \zeta_M \iota_\gamma$ and rather than the multiplication of delta functions, and using the integration by parts lemma, Lemma \ref{lemma integration by parts against kinetic function}. One obtains
    \begin{align*}
        &\int_0^t\int_\mathbb{R}\int_U d\langle\chi^{\epsilon,\delta}_{\cdot,1},\chi^{\epsilon,\delta}_{\cdot,2}\rangle_s(y,\eta)\, \phi_\beta(\eta) \zeta_M(\eta) \iota_\gamma(y)\,dy\,d\eta\\
        &=\sum_{k=1}^{\infty}\int_0^t\int_\mathbb{R}\int_{U^3} \left(f_k\sigma'(\rho^1)\nabla\rho^1+\sigma(\rho^1)\nabla f_k\right)\cdot\left(f_j\sigma'(\rho^2)\nabla\rho^2+\sigma(\rho^2)\nabla f_j\right)\\
        &\hspace{180pt}\times\bar{k}^{\epsilon,\delta}_{s,1}\bar{k}^{\epsilon,\delta}_{s,2} \phi_\beta(\eta) \zeta_M(\eta) \iota_\gamma(y)\,dx\,dx'\,dy\,d\eta\,ds.
    \end{align*} 
Putting this together, it follows from equation \eqref{equation for stochastic product rule for uniqueness proof} and the subsequent above computations that we have the decomposition:
\begin{align}\label{mixed term in uniqueness}
    &\left.\int_{\mathbb{R}}\int_{U}\chi^{\epsilon,\delta}_{s,1}(y,\eta) \chi^{\epsilon,\delta}_{s,2}(y,\eta) \zeta_M(\eta)\phi_\beta(\eta) \iota_\gamma(y)\,dy\,d\eta\right|_{s=0}^t\nonumber\\
    &\hspace{180pt}
    =I_t^{err}+I_t^{meas}+I_t^{mix,cut}+I_t^{mix,mart}+I_t^{mix,cons}.
\end{align}
We put all four terms from the quadratic variation term into the error term and regroup the terms.
Note that the addition of the artificial term in terms $I^{1,2,err}$ and $I^{2,1,err}$ factorises with a term from the quadratic variation and allows it to be controlled, for more detail see \textcolor{red}{\cite[Eqn.~(4.24)]{fehrman2024well}} and subsequent computation there.
Similarly, the measure term just arises from the first two components of \eqref{equation for stochastic product rule for uniqueness proof},
\[I_t^{meas}=I_t^{1,2,meas} + I_t^{2,1,meas}.\]
The contribution from the mixed term (third term of \eqref{rigorous equation for difference of solutions in uniqueness proof}) in the cutoff, martingale and conservative terms just comes from the sum of the first two terms of equation \eqref{equation for stochastic product rule for uniqueness proof} and are denoted as mixed terms above in \eqref{mixed term in uniqueness}.
Finally, we return back to the equation of interest that governs the $L^1$ difference of two solutions, equation \eqref{rigorous equation for difference of solutions in uniqueness proof}. One has the decomposition
\begin{equation}\label{equation for difference of two solutions in uniqueness proof}
    \left.\int_{\mathbb{R}}\int_U \left(\chi^{\epsilon,\delta}_{s,1}+ \chi^{\epsilon,\delta}_{s,2} -2\chi^{\epsilon,\delta}_{s,1}\chi^{\epsilon,\delta}_{s,2}\right) \phi_\beta \zeta_M\iota_\gamma\right|_{s=0}^t= -2 I_t^{err} -2 I_t^{meas} + I_t^{mart} + I_t^{cut}+ I_t^{cons}.
\end{equation}
The error and measure term were defined above and arise solely from the mixed term \eqref{mixed term in uniqueness}, the final term on the left hand side of \eqref{equation for difference of two solutions in uniqueness proof}.
The martingale, cutoff and conservative terms arise from all three terms in the left hand side of equation \eqref{equation for difference of two solutions in uniqueness proof},
\[I_t^{mart,cut,cons}= I_t^{1,mart,cut,cons}+ I_t^{2,mart,cut,cons}-2 I_t^{mix,mart,cut,cons}.\]
Let us deal with each term on the right hand side of \eqref{equation for difference of two solutions in uniqueness proof} separately.\\
\textbf{Measure term.}\\
Firstly, by H\"older's inequality and the regularity property (point three) in Definition \ref{definition of stochastic kinetic solution of generalised Dean--Kawasaki equation} of stochastic kinetic solution, we have
\[I_{t}^{meas}\geq 0. \]
\textbf{Error term.}\\
Following the computations from equation \textcolor{red}{\cite[Eqn.~(4.24)]{fehrman2024well}} to \textcolor{red}{\cite[Eqn.~(4.26)]{fehrman2024well}}, we have
\[\limsup_{\delta\to0}\left(\limsup_{\epsilon\to0}|I_t^{err}|\right)=0.\]
\textbf{Cutoff term.}\\
We have for every $\beta\in(0,1), M\in\mathbb{N}, \gamma>0$ sufficiently small,  $\delta\in(0,\beta/4), \epsilon\in(0,\gamma/4)$ that for every $t\in[0,T]$,
\begin{align}\label{equation cutoff term in uniqueness proof}
    &I_t^{cut}:= I_t^{1,cut} + I_t^{2,cut} -2 I_t^{mix,cut}\nonumber\\
    &=\int_0^t \int_{\mathbb{R}^2}\int_{U^2}\kappa^{\epsilon,\delta}(y,x,\eta,\xi)\partial_\eta(\zeta_M(\eta)\phi_\beta(\eta)) \iota_\gamma(y) (-1+2\chi^{\epsilon,\delta}_{s,2})\,dq^1(x, \xi, s)\,dy\,d\eta\nonumber\\
    &+\frac{1}{2}\int_{\mathbb{R}}\int_0^t \int_{U^2}\left(
    \sigma'(\rho^1)\sigma(\rho^1)\nabla\rho^1\cdot F_2 + \sigma(\rho^1)^2F_3 \right)
    \bar{k}^{\epsilon,\delta}_{s,1}(1-2\chi^{\epsilon,\delta}_{s,2})\partial_\eta(\zeta_M(\eta)\phi_\beta(\eta)) \iota_\gamma(y)\,dy\,dx\,ds\,d\eta\nonumber\\
    &+\int_0^t \int_{\mathbb{R}^2}\int_{U^2}\kappa^{\epsilon,\delta}(y,x',\eta,\xi)\partial_\eta(\zeta_M(\eta)\phi_\beta(\eta)) \iota_\gamma(y)(-1+2\chi^{\epsilon,\delta}_{s,1})\,dq^2(x', \xi, s)\,dy\,d\eta\nonumber\\
    &+\frac{1}{2}\int_{\mathbb{R}}\int_0^t \int_{U^2}\left( \sigma'(\rho^2)\sigma(\rho^2)\nabla\rho^2\cdot F_2 + \sigma(\rho^2)^2F_3 \right)\bar{k}^{\epsilon,\delta}_{s,2}(1-2\chi^{\epsilon,\delta}_{s,1})\partial_\eta(\zeta_M(\eta)\phi_\beta(\eta)) \iota_\gamma(y)\,dy\,dx\,ds\,d\eta\nonumber\\
    &+\int_{\mathbb{R}}\int_0^t \int_{U^2}\left(\Phi'(\rho^1)\nabla\rho^1+\frac{1}{2}F_1[\sigma'(\rho^1)]^2\nabla\rho^1 +\frac{1}{2} \sigma'(\rho^1)\sigma(\rho^1)F_2\right)\nonumber\\
    &\hspace{200pt}\times \bar{k}^{\epsilon,\delta}_{s,1}\zeta_M(\eta)\phi_\beta(\eta) \cdot\nabla\iota_\gamma(y)(-1+2\chi^{\epsilon,\delta}_{s,2})\,dy\,dx\,ds\,d\eta\nonumber\\
    &+\int_{\mathbb{R}}\int_0^t \int_{U^2}\left(\Phi'(\rho^2)\nabla\rho^2 +\frac{1}{2}F_1[\sigma'(\rho^2)]^2\nabla\rho^2+\frac{1}{2} \sigma'(\rho^2)\sigma(\rho^2)F_2\right)\nonumber\\
    & \hspace{200pt}\times\bar{k}^{\epsilon,\delta}_{s,2}\zeta_M(\eta)\phi_\beta(\eta) \cdot\nabla\iota_\gamma(y)(-1+2\chi^{\epsilon,\delta}_{s,1})\,dy\,dx'\,ds\,d\eta.    
\end{align}
Let us begin by bounding the final two lines of $I_t^{cut}$ above, comprising of the new terms involving gradients of the spatial cutoff. 
We take the $\epsilon,\delta\to 0$ limits first and use the distributional equality for $i,j\in\{1,2\}$,
\begin{align}\label{distributional equality for k bar times 1- chi}
\lim_{\epsilon,\delta\to0}\bar{k}^{\epsilon,\delta}_{s,i}(x,y,\eta,\rho)(-1+2\chi^{\epsilon,\delta}_{s,j}(y,\eta))&\to \delta_0(x-y)\delta_0(\eta-\rho^i)sgn(\rho^j-\eta)\nonumber\\
&=\delta_0(x-y)\delta_0(\eta-\rho^i)sgn(\rho^j-\rho^i).
\end{align}
This means that the final two lines of the cutoff \eqref{equation cutoff term in uniqueness proof} can be realised in the $\epsilon,\delta\to0$ as
\begin{align}\label{new terms in uniqueness}
    &\int_0^t \int_{U}\left(\Phi'(\rho^1)\nabla\rho^1+\frac{1}{2}F_1[\sigma'(\rho^1)]^2\nabla\rho^1 +\frac{1}{2} \sigma'(\rho^1)\sigma(\rho^1)F_2\right)\nonumber\\
    &\hspace{140pt}\times\zeta_M(\rho^1)\phi_\beta(\rho^1) \cdot\nabla\iota_\gamma(y)sgn(\rho^2-\rho^1)\,dy\,ds\nonumber\\
    &+\int_0^t \int_{U^2}\left(\Phi'(\rho^2)\nabla\rho^2 +\frac{1}{2}F_1[\sigma'(\rho^2)]^2\nabla\rho^2+\frac{1}{2} \sigma'(\rho^2)\sigma(\rho^2)F_2\right)\nonumber\\
    & \hspace{140pt}\times\zeta_M(\rho^2)\phi_\beta(\rho^2) \cdot\nabla\iota_\gamma(y)sgn(\rho^1-\rho^2)\,dy\,ds.
\end{align}
The terms of the two lines are combined using the fact that $sgn(\rho^1-\rho^2)=-sgn(\rho^2-\rho^1)$.\\
We will deal with terms that have a factor of $\nabla\rho$ and terms that do not separately.
Let us consider the first terms in both the lines of \eqref{new terms in uniqueness}.
Start by defining the function 
$\Phi_{M,\beta}$ to be the unique function such that $\Phi_{M,\beta}(0)=0$ and
\[ \Phi'_{M,\beta}(\xi)=\zeta_M(\xi)\phi_\beta(\xi)\Phi'(\xi)\geq 0.
\]
This says that the function $\Phi_{M,\beta}$ is non-decreasing. 
Hence, with the convention that $sgn(0)=1$, using Lemma \ref{remark derivative of spatial cutoff} to define the spatial derivative of the cutoff, the notation $v_y:=\frac{y-y*}{|y-y*|}$ for the inward pointing unit normal, and the fundamental theorem of calculus, we can show that the difference of the first terms of \eqref{new terms in uniqueness} is non-negative
\begin{align}\label{equation bringing sgn into derivative for increasing function}
     &-\int_0^t \int_{U}\left(\nabla\Phi_{M,\beta}(\rho^2)-\nabla\Phi_{M,\beta}(\rho^1)\right) \cdot\nabla\iota_\gamma(y)\,sgn(\rho^2-\rho^1)\,dy\,ds\nonumber\\
     &=-\int_0^t \int_{U}\left(\nabla\Phi_{M,\beta}(\rho^2)-\nabla\Phi_{M,\beta}(\rho^1)\right)\cdot\nabla\iota_\gamma(y)\,sgn(\Phi_{M,\beta}(\rho^2)-\Phi_{M,\beta}(\rho^1))\,dy\,ds\nonumber\\
     &=-\int_0^t \int_{U}\nabla|\Phi_{M,\beta}(\rho^2)-\Phi_{M,\beta}(\rho^1)| \cdot\nabla\iota_\gamma(y)\,dy\,ds\nonumber\\
     &=-\gamma^{-1}\int_0^t \int_{U\setminus U_\gamma}\nabla|\Phi_{M,\beta}(\rho^2)-\Phi_{M,\beta}(\rho^1)| \cdot v_y\,dy\,ds\nonumber\\
     &=-\gamma^{-1}\int_0^t \int_0^\gamma \int_{\partial U_{z}}\nabla|\Phi_{M,\beta}(\rho^2(y^*+zv_y,s))-\Phi_{M,\beta}(\rho^1(y^*+zv_y,s))| \cdot v_y\,dy\,dz\,ds\nonumber\\
     &=-\gamma^{-1}\int_0^t \int_0^\gamma \int_{\partial U_{z}}\frac{\partial}{\partial z}|\Phi_{M,\beta}(\rho^2(y^*+zv_y,s))-\Phi_{M,\beta}(\rho^1(y^*+zv_y,s))|\,dy\,dz\,ds\nonumber\\
     &=\gamma^{-1}\int_0^t \int_{\partial U}|\Phi_{M,\beta}(\rho^2)-\Phi_{M,\beta}(\rho^1)|- \gamma^{-1}\int_0^t \int_{\partial U_\gamma}|\Phi_{M,\beta}(\rho^2)-\Phi_{M,\beta}(\rho^1)|\,\leq 0.
     \end{align}
     \textcolor{red}{The final line in the above is non-zero, since the first term is an integral over $\partial U$ and the second is an integral over $\partial U_\gamma$. However,}
the final line is signed, since the first term vanishes due to the solutions coinciding on the boundary, and the second term is non-positive because the integrand is clearly non-negative for every fixed $\gamma>0$.
By repeating the same arguments, noting that $\frac{1}{2}F_1(\sigma'(\rho^2))^2\geq 0$, one can conclude that the combination of second terms of \eqref{new terms in uniqueness} are non-positive for every $\gamma>0$. 
For the final term of \eqref{new terms in uniqueness}, we have by Lemma \ref{remark derivative of spatial cutoff} as well as by the boundedness of $F_2$ and $sgn$
\begin{align*}
    &\frac{1}{2} \int_0^t \int_{U} \left(\sigma'(\rho^2)\sigma(\rho^2)\zeta_M(\rho^2)\phi_\beta(\rho^2) - \sigma'(\rho^1)\sigma(\rho^1)\zeta_M(\rho^1)\phi_\beta(\rho^1) \right)sgn(\rho^2-\rho^1)F_2\cdot\nabla\iota_\gamma(y)\,dy\,ds\nonumber\\
    &\leq c \gamma^{-1} \int_0^t \int_{U} \left|\sigma'(\rho^2)\sigma(\rho^2)\zeta_M(\rho^2)\phi_\beta(\rho^2) - \sigma'(\rho^1)\sigma(\rho^1)\zeta_M(\rho^1)\phi_\beta(\rho^1) \right|\mathbbm{1}_{U \setminus U_{\gamma}}(y)\,dy\,ds.
\end{align*}
For ease of notation, for every $M,\beta$ define the function $G_{M,\beta}$ by $G_{M,\beta}(\xi)=\sigma'(\xi)\sigma(\xi)\zeta_M(\xi)\phi_\beta(\xi)$, which is bounded and Lipschitz due to the fact that $\sigma$ and $\sigma'$ are locally Lipschitz, and also it is clearly zero outside $[\beta/2,M+1]$.
For every $y\in U$ sufficiently close to the boundary (i.e. $\gamma$ sufficiently small above) let $y^*=y^*(y)$ denote the unique closest point on the boundary to $y$.
For $i=1,2$ we denote $\rho^i(y^*):=\Phi^{-1}\left(\bar{f}(y^*)\right)$ and $\rho^i_{M,\beta}(y^*):=(\rho^i(y^*)\vee\beta/2)\wedge (M+1)$ for $y^*\in \partial U$.
By adding and subtracting this boundary data, the triangle inequality and Lipschitz property of $G_{M,\beta}$, we have
\begin{align}\label{final cutoff term in uniqueness}
    &c \gamma^{-1} \int_0^t \int_{U\setminus U_{\gamma}} \left|\sigma'(\rho^2)\sigma(\rho^2)\zeta_M(\rho^2)\phi_\beta(\rho^2) - \sigma'(\rho^1)\sigma(\rho^1)\zeta_M(\rho^1)\phi_\beta(\rho^1) \right|\,dy\,ds \nonumber\\
    &\leq c \gamma^{-1} \int_0^t \int_{U\setminus U_{\gamma}} \left(|G_{M,\beta}(\rho^2) - G_{M,\beta}(\rho^2(y^*))|+ |G_{M,\beta}(\rho^1(y^*))-G_{M,\beta}(\rho^1)|\right)\,dy\,ds \nonumber\\
    &\leq c \gamma^{-1} 
    \int_0^t \int_{U\setminus U_{\gamma}} \left(|\rho_{M,\beta}^2(y,s) - \rho^2_{M,\beta}(y^*,s)|+|\rho^1_{M,\beta}(y^*,s)-\rho_{M,\beta}^1(y,s)|\right)\,dy\,ds.
\end{align}
Both of the above terms can be handled in the same way.
Write for $i=1,2$, fixed distance from the boundary $\gamma'\in(0,\gamma)$, fixed time $s\in[0,t]$, and for running constant $c\in(0,\infty)$,
\begin{align*}
    \int_{\partial U_{\gamma'}}|\rho_{M,\beta}^i(y,s)-\rho^i_{M,\beta}(y^*,s)|&=\int_{\partial U_{\gamma'}}\left|\int_0^{\gamma'}\nabla\rho_{M,\beta}^i\left(y^*+zv_y,s\right)\,dz \right| \nonumber\\
    &\leq\int_{U\setminus U_{\gamma'}}|\nabla\rho^i_{M,\beta}(y,s)|\,dy\\
    &\leq |U\setminus U_{\gamma'}|^{1/2}\,\|\nabla\rho^i_{M,\beta}\|_{L^2(U\setminus U_{\gamma'})}\\
    &\leq c(\gamma')^{1/2}\,\|\nabla\rho^i_{M,\beta}\|_{L^2(U\setminus U_{\gamma'})}\\
    &\leq c(\gamma')^{1/2} \,\|\nabla\rho^i_{M,\beta}\|_{L^2(U)},
\end{align*}
where in the final line we made the norm independent of $\gamma'$.
We therefore bound the terms of \eqref{final cutoff term in uniqueness} by
\begin{align*}
         c\, \gamma^{-1}  \int_0^t \int_{U\setminus U_{\gamma}} |\rho^i_{M,\beta}(y,s) - \rho^i_{M,\beta}(y^*)|\,dy\,ds&=c\, \gamma^{-1}  \int_0^t \int_0^\gamma \int_{\partial U_{\gamma'}} |\rho^i_{M,\beta}(y,s) - \rho^i_{M,\beta}(y^*)|\,dy\,d\gamma'\,ds\\
        &\leq c\, \gamma^{-1}  \int_0^t \|\nabla\rho^i_{M,\beta}\|_{L^2(U)}\left(\int_0^\gamma  \sqrt{\gamma'}\,d\gamma'\right)\,ds\\
     &\leq c\, \gamma^{1/2}\|\nabla\rho^i_{M,\beta}\|_{L^1([0,t];L^2(U))},
\end{align*}
which converges to $0$ as $\gamma\to0$ for fixed $M,\beta$.
Therefore we managed to show that the final two lines of the cutoff \eqref{equation cutoff term in uniqueness proof} consisting of the new terms are non-positive in the $\gamma\to0$ limit.
The below is an important point about the remaining cutoff terms, as well as martingale and conservative terms that are yet to be analysed.\\
Following the computations of the uniqueness proof \textcolor{red}{\cite[Thm.~4.7]{fehrman2024well}}, the terms involving $F_2$ in the second and fourth lines of the cutoff term \eqref{equation cutoff term in uniqueness proof},
    as well as martingale and conservative terms are handled by integration by parts.
    More precisely the $\epsilon,\delta$ limits are taken first, then integration by parts is performed before taking $M,\beta$ limits.
On the bounded domain, the presence of a spatial cutoff $\iota_\gamma$ leads to additional terms with factors $\nabla\iota_\gamma$ when integrating by parts.
We emphasise that these new terms can be bounded in a similar way to the third term in \eqref{new terms in uniqueness}.
We will explain how to bound these terms, which will allow us to conclude.\\
Firstly, to bound the terms involving $F_2$ in the second and fourth line of \eqref{equation cutoff term in uniqueness proof}, analogously to above, in the $\epsilon,\delta\to0$ limit we use the distributional equality \eqref{distributional equality for k bar times 1- chi}, followed by the equality $sgn(\rho^1-\rho^2)=-sgn(\rho^2-\rho^1)$ and finally the 
product rule to evaluate the derivative of the cutoffs to get
\begin{align*}
      &\lim_{\epsilon,\delta\to0}\left(\frac{1}{2}\int_{\mathbb{R}}\int_0^t \int_{U^2}
    \sigma'(\rho^1)\sigma(\rho^1)\nabla\rho^1\cdot F_2
    \bar{k}^{\epsilon,\delta}_{s,1}(1-2\chi^{\epsilon,\delta}_{s,2})\partial_\eta(\zeta_M(\eta)\phi_\beta(\eta)) \,\iota_\gamma(y)\,dy\,dx\,ds\,d\eta\right.\nonumber\\
    &\hspace{30pt}\left.+\frac{1}{2}\int_{\mathbb{R}}\int_0^t \int_{U^2} \sigma'(\rho^2)\sigma(\rho^2)\nabla\rho^2\cdot F_2 \bar{k}^{\epsilon,\delta}_{s,2}(1-2\chi^{\epsilon,\delta}_{s,1})\partial_\eta(\zeta_M(\eta)\phi_\beta(\eta)) \,\iota_\gamma(y)\,dy\,dx\,ds\,d\eta\right)\nonumber\\
    &=\frac{1}{2}\int_0^t\int_U\left(\sigma'(\rho^2)\sigma(\rho^2)\nabla\rho^2-\sigma'(\rho^1)\sigma(\rho^1)\nabla\rho^1\right)\cdot F_2 \partial_\eta(\zeta_M(\eta)\phi_\beta(\eta)) sgn(\rho^2-\rho^1)\,\iota_\gamma\,dx\,ds\nonumber\\
    &=\frac{1}{4}\int_0^t\int_U\left(\nabla\sigma^2(\rho^2)-\nabla\sigma^2(\rho^1)\right)\cdot F_2 \left(\mathbbm{1}_{M<\rho<M+1}+\beta^{-1}\mathbbm{1}_{\beta/2<\rho<\beta}\right) sgn(\rho^2-\rho^1)\,\iota_\gamma\,dx\,ds\nonumber\\
    &=\frac{1}{4\beta}\int_0^t\int_U\nabla\left(\sigma^2\left(\left(\rho^2\vee\beta\right)\wedge\beta/2\right)-\sigma^2\left(\left(\rho^1\vee\beta\right)\wedge\beta/2\right)\right)\cdot F_2  sgn(\rho^2-\rho^1)\,\iota_\gamma\,dx\,ds\nonumber\\
    &\hspace{20pt}+\frac{1}{4}\int_0^t\int_U\nabla\left(\sigma^2\left(\left(\rho^2\vee(M+1)\right)\wedge M\right)-\sigma^2\left(\left(\rho^1\vee(M+1)\right)\wedge M\right)\right)\cdot F_2  sgn(\rho^2-\rho^1)\,\iota_\gamma\,dx\,ds.
\end{align*}
Note that $\sigma^2$ is not necessarily increasing, so we \textcolor{red}{cannot} use that $sgn(\rho^2-\rho^1)=sgn(\sigma^2(\rho^2)-\sigma^2(\rho^1))$ as in \eqref{equation bringing sgn into derivative for increasing function}.
Instead we smooth out the sign function by writing $sgn^\delta:=sgn \ast \kappa^\delta_1$ for $\delta\in(0,1)$ before integrating by parts.
The terms involving $M$ are handled in the same way as the terms involving $\beta$, so we illustrate how to handle the $\beta$ term.
For convenience introduce the shorthand notation $\rho^i_\beta(x,t):=\left(\rho^i(x,t)\vee\beta\right)\wedge\beta/2$ for $i=1,2$, $(x,t)\in U\times[0,T]$. The terms involving $\beta$ can be written as
\begin{align*}
    &\frac{1}{4\beta}\int_0^t\int_U\nabla\left(\sigma^2(\rho^2_\beta)-\sigma^2(\rho^1_\beta)\right)\cdot F_2  sgn(\rho^2-\rho^1)\iota_\gamma\,dx\,ds\\
    &=\lim_{\delta\to0}\frac{1}{4\beta}\int_0^t\int_U\nabla\left(\sigma^2(\rho^2_\beta)-\sigma^2(\rho^1_\beta)\right)\cdot F_2  sgn^\delta(\rho^2-\rho^1)\iota_\gamma\,dx\,ds\\
    &=-\lim_{\delta\to0}\frac{1}{4\beta}\int_0^t\int_U\left(\sigma^2(\rho^2_\beta)-\sigma^2(\rho^1_\beta)\right)\nabla\cdot F_2  sgn^\delta(\rho^2-\rho^1)\iota_\gamma\,dx\,ds\\
    &\hspace{20pt}-\lim_{\delta\to0}\frac{1}{4\beta}\int_0^t\int_U\left(\sigma^2(\rho^2_\beta)-\sigma^2(\rho^1_\beta)\right) \nabla(\rho^2-\rho^1)\cdot F_2  (sgn^\delta)'(\rho^2-\rho^1)\iota_\gamma\,dx\,ds\\
    &\hspace{20pt}-\lim_{\delta\to0}\frac{1}{4\beta}\int_0^t\int_U\left(\sigma^2(\rho^2_\beta)-\sigma^2(\rho^1_\beta)\right)\nabla\iota_\gamma\cdot F_2\, sgn^\delta(\rho^2-\rho^1)\,dx\,ds.
\end{align*}
For the first two terms we can directly take the $\gamma\to0$ limit since the cutoff converges point-wise to $1$  and so they can be handled analogously as on the torus, see the computation leading from equation \textcolor{red}{\cite[Eqn.~(4.28)]{fehrman2024well}} to \textcolor{red}{\cite[Eqn.~(4.31)]{fehrman2024well}}. 
We only need to consider the final term involving the gradient of spatial cutoff, which we can bound in an analogous way to the final term in equation \eqref{new terms in uniqueness} after realising that $\sigma^2(\cdot\vee\beta\wedge\beta/2)$ is Lipschitz for every fixed $\beta>0$, $F_2$ and $sgn^\delta$ are bounded, and noting we take $\gamma\to0$ limit before $M$ and $\beta$ limits.\\
To show that the first and third lines of the cutoff term \eqref{equation cutoff term in uniqueness proof} vanish we use precisely the decays of the kinetic measure at zero and infinity.
Putting \eqref{equation cutoff term in uniqueness proof} and subsequent computations together, we conclude
\[\lim_{M\to\infty,\beta\to0}\lim_{\gamma\to0}\lim_{\epsilon,\delta\to0}I_t^{cut}\leq0.\]
\textbf{Martingale term.}\\
Following the analysis from \textcolor{red}{\cite[Eqn.~(4.27)]{fehrman2024well}} to \textcolor{red}{\cite[Eqn.~(4.36)]{fehrman2024well}}, we have that, for the unique function $\Theta_{M,\beta}:[0,\infty)\to[0,\infty)$ defined by $\Theta_{M,\beta}(0)=0$, $\Theta_{M,\beta}'(\xi)=\phi_\beta(\xi)\zeta_M(\xi)\sigma'(\xi)$
\begin{align*}
    \lim_{\epsilon,\delta\to0}I_t^{mart}&=\int_0^t\int_U sgn(\rho^2-\rho^1)\iota_\gamma\nabla\cdot\left(\left(\Theta_{M,\beta}(\rho^1)-\Theta_{M,\beta}(\rho^2)\right)d\xi^F\right)\\
    &+\int_0^t\int_U sgn(\rho^2-\rho^1)\iota_\gamma\left(\phi_\beta(\rho^1)\zeta_M(\rho^1)\sigma(\rho^1)-\Theta_{M,\beta}(\rho^1)\right)\nabla\cdot d\xi^F\\
    &-\int_0^t\int_U sgn(\rho^2-\rho^1)\iota_\gamma\left(\phi_\beta(\rho^2)\zeta_M(\rho^2)\sigma(\rho^2)-\Theta_{M,\beta}(\rho^2)\right)\nabla\cdot d\xi^F.
\end{align*}
The final two terms can be handled directly as in \cite{fehrman2024well} by first directly taking the $\gamma\to0$ limit.
For the first term, using again the regularisation of the sign function and subsequently integrating by parts gives two terms. 
When the derivative hits the regularised sign, the term can be handled in the same way as \cite{fehrman2024well} after immediately taking the $\gamma\to0$ limit, and we get a new term when the derivative hits the spatial cutoff,
\[\int_0^t\int_U sgn^\delta(\rho^2-\rho^1)\left(\Theta_{M,\beta}(\rho^1)-\Theta_{M,\beta}(\rho^2)\right)\nabla\iota_\gamma\cdot d\xi^F.\]
By using the Burkholder-Davis-Gundy inequality, see \textcolor{red}{\cite[Thm.~4.1]{revuz2013continuous}}, the boundedness of the $sgn^\delta$ as well as the bound on the derivative of the spatial cutoff given in Lemma \ref{remark derivative of spatial cutoff}, we have
\begin{align*}
    &\mathbb{E}\left(\sup_{t\in{[0,T]}}\left|\int_0^t\int_U sgn^\delta(\rho^2-\rho^1)\left(\Theta_{M,\beta}(\rho^1)-\Theta_{M,\beta}(\rho^2)\right)\nabla\iota_\gamma\cdot d\xi^F\right|\right)\\
    &\leq c\mathbb{E}\left(\int_0^T\left(\int_U sgn^\delta(\rho^2-\rho^1)\left(\Theta_{M,\beta}(\rho^1)-\Theta_{M,\beta}(\rho^2)\right)|\nabla\iota_\gamma|\right)^2\,ds\right)^{1/2}\\
    &\leq c\gamma^{-1}\,\mathbb{E}\left(\int_0^T\left(\int_U \left|(\Theta_{M,\beta}(\rho^1)-\Theta_{M,\beta}(\rho^2)\right|\mathbbm{1}_{U \setminus U_\gamma}\right)^2\,ds\right)^{1/2}.
\end{align*}
\textcolor{red}{For every fixed $M,\beta,$ we have that $\Theta_{M,\beta}$ is Lipschitz, so this term can now just be handled in the same way as the final term of \eqref{new terms in uniqueness}. 
The key point is again that we obtain a factor of $\gamma^{3/2}$, which more than compensates the blow up of $\gamma^{-1}$.}
\begin{align*}
    &\mathbb{E}\left(\sup_{t\in{[0,T]}}\left|\int_0^t\int_U sgn(\rho^2-\rho^1)\left(\Theta_{M,\beta}(\rho^1)-\Theta_{M,\beta}(\rho^2)\right)\nabla\iota_\gamma\cdot d\xi^F\right|\right)\\
    &\leq c\gamma^{-1}\,\mathbb{E}\left(\int_0^T\left(\sum_{i=1}^2\|\nabla\rho^i_{M,\beta}\|_{L^2(U)} \gamma^{3/2}\right)^2\,ds\right)^{1/2}\\
    &\leq c\gamma^{1/2}\sum_{i=1}^2\mathbb{E}\|\nabla\rho^i_{M,\beta}\|_{L^2([0,T];L^2(U))}.
\end{align*}
The final term converges to zero in the $\gamma\to0$ limit for fixed $M,\beta$. This implies almost sure convergence of this new term along a sub-sequence, that is
\[\lim_{M\to\infty,\beta\to0}\left(\lim_{\gamma\to0}\left(\lim_{\epsilon,\delta\to0}I_t^{mart}\right)\right)=0.\]
\textbf{Conservative term.}\\
The same arguments as the martingale term, in particular equation \textcolor{red}{\cite[Eqn.~(4.31)]{fehrman2024well}} to \textcolor{red}{\cite[Eqn.~(4.35)]{fehrman2024well}}, also apply here.
After again taking the $\epsilon,\delta$ limit we have
\begin{align*}
    \lim_{\epsilon,\delta\to0}I_t^{cons}=\int_0^t\int_U sgn(\rho^2-\rho^1)\iota_\gamma\left(\nabla\cdot\nu(\rho^1)\zeta_M\phi_\beta(\rho^1)-\nabla\cdot\nu(\rho^2)\zeta_M\phi_\beta(\rho^2)\right).
\end{align*}
First define the Lipschitz vector valued function $\Psi_{M,\beta,\nu}=(\Psi_{M,\beta,\nu,i})_{i=1}^d$ such that for $\nu=(\nu_i)_{i=1}^d$
\[\Psi_{M,\beta,\nu,i}(0)=0, \hspace{10pt} \frac{\partial\Psi_{M,\beta,\nu,i}}{\partial x_i}(\xi)=\frac{\partial^2\nu_i}{\partial x_i^2}(\xi)\phi_\beta(\xi)\zeta_M(\xi).\]

Then with a similar re-writing as the martingale term, we have
\begin{align*}
     \lim_{\epsilon,\delta\to0}I_t^{cons}&=\int_0^t\int_U  sgn(\rho^2-\rho^1)\iota_\gamma\nabla\cdot\left(\Psi_{M,\beta,\nu}(\rho^1)-\Psi_{M,\beta,\nu}(\rho^2)\right)\\
     &+\int_0^t\int_U sgn(\rho^2-\rho^1)\iota_\gamma\left(\nabla\cdot\nu(\rho^1)\zeta_M\phi_\beta(\rho^1)-\nabla\cdot\Psi_{M,\beta,\nu}(\rho^1)\right)\\
     &-\int_0^t\int_U sgn(\rho^2-\rho^1)\iota_\gamma\left(\nabla\cdot\nu(\rho^2)\zeta_M\phi_\beta(\rho^2)-\nabla\cdot\Psi_{M,\beta,\nu}(\rho^2)\right).
\end{align*}
The final two terms can be handled analogously to the final two terms appearing in the martingale term after taking a $\gamma\to0$ limit.
The first term can be handled using integration by parts, analogous to the martingale term as shown above.
Here we use the $L^1(U)$ integrability of $\nu(\rho)$ and the final assumption of Assumption \ref{assumptions on coefficients for uniqueness} to apply the dominated convergence theorem, and conclude that along subsequences
\[\lim_{M\to\infty,\beta\to0}\left(\lim_{\gamma\to0}\left(\lim_{\epsilon,\delta\to0}I_t^{cons}\right)\right)=0.\]
\
\textbf{Conclusion.}\\
Putting everything together we get from \eqref{equation for difference of two solutions in uniqueness proof} and subsequent handling of each term, that there are random sub-sequences $\epsilon,\delta,\beta,\gamma \to0$, $M\to\infty$ along which
\begin{align*}
    \left.\int_\mathbb{R}\int_{U}|\chi^1_s-\chi^2_s|^2\right|_{s=0}^t&=\lim_{\beta\to0,M\to\infty}\lim_{\gamma\to0}\lim_{\epsilon,\delta\to0}\left.\int_\mathbb{R}\int_{U}|\chi^{\epsilon,\delta}_{s,1}-\chi^{\epsilon,\delta}_{s,2}|^2\phi_\beta\zeta_M\iota_\gamma\right|_{s=0}^t\\
    &=\lim_{\beta\to0,M\to\infty}\lim_{\gamma\to0}\lim_{\epsilon,\delta\to0}\left(-2 I_t^{err} -2 I_t^{meas} + I_t^{mart} + I_t^{cut}+ I_t^{cons}+I_t^{bound}\right)\leq 0.
\end{align*}
This implies that
\begin{align*}
    &\int_U|\rho^1(\cdot,t)-\rho^2(\cdot,t)|=\int_\mathbb{R}\int_{U}|\chi^1_t-\chi^2_t|^2\leq \int_\mathbb{R}\int_{U}|\chi^1_0-\chi^2_0|^2 =\int_U|\rho^1_0-\rho^2_0|.
\end{align*}
\end{proof}

\section{Existence}\label{section 4 Existence}
In this section, we construct a stochastic kinetic solution of the generalised Dean--Kawasaki equation \eqref{generalised Dean--Kawasaki equation Ito} in the sense of Definition \ref{definition of stochastic kinetic solution of generalised Dean--Kawasaki equation}.
The existence consists of three steps.
Firstly, in Section \ref{section 4.1 a priori estimates for regularised equation} we will prove $L^2(U)$ energy estimates for a suitable regularised version of \eqref{generalised Dean--Kawasaki equation Ito}.
To do this we will use the regularised equation \eqref{Regularised equation} and smooth the non-linearity $\sigma$.
We then proceed to prove further space-time regularity results for weak solutions of the regularised equation.\\
As an aside, in \ref{section4.2 entropy estimate} is dedicated to proving an entropy estimate for the equation and a localised version of this argument helps us to prove a statement about the Kinetic measure at zero in Section \ref{section 4.3 decay of kinetic measure at zero}.
For all of the energy estimates we will need to introduce harmonic PDE's (for example in Definitions \ref{definition of functions g and h_M} and \ref{definition of v delta}) that allow certain functions to vanish along the boundary when applying It\^o's formula.\\
The second and third steps are analogous to \textcolor{red}{\cite[Sec.~5]{fehrman2024well}}, and so we are brief in their presentation.
For the second step, in the first part of Section \ref{section 4.2 existence of solution} we show that there exists a stochastic kinetic solution to the regularised equation.
Since all the coefficients are regular, the proof follows by a projection argument, where the projected system is just a finite dimensional system of stochastic differential equations and so has a unique strong solution.
The final step, illustrated in the latter half of Section \ref{section 4.2 existence of solution}, requires us to pass to the limit in the regularisation.

\subsection{A priori estimates for the regularised equation}\label{section 4.1 a priori estimates for regularised equation}

 In this section we start with some definitions as well as stating the relevant assumptions needed for uniqueness. 
 The main result of the section is the subsequent $L^2(U)$ energy estimate of the regularised equation in Proposition \ref{energy estimate of regularised equation proposition}.
 We conclude by proving some higher order spatial regularity of the solution in Lemma \ref{lemma on higher order spatial regularity of regularised solution} and higher order space-time regularity of the solution cutoff away from zero in Proposition \ref{proposition higher order time regularity of solution cut away from zero set} that will be essential in the tightness arguments.\\
The estimates will be proven with respect to the  regularised equation \eqref{Regularised equation}, which we recall is given by
\[
 \partial_t\rho^\alpha=\Delta\Phi (\rho^\alpha)+\alpha\Delta \rho^\alpha -\nabla\cdot (\sigma(\rho^\alpha) \dot{\xi}^F + \nu(\rho^\alpha)) + \frac{1}{2}\nabla\cdot(F_1[\sigma'(\rho^\alpha)]^2\nabla\rho^\alpha + \sigma'(\rho^\alpha)\sigma(\rho^\alpha)F_2),\]
 defined for $\alpha\in(0,1)$ with boundary condition $\Phi(\rho^\alpha)=\bar{f}$.
 For ease of notation when proving estimates about the regularised equation we denote the regularised equation by $\rho$ instead of $\rho^\alpha$.
Motivated with trying to write the factor $\Phi'(\rho)|\nabla\rho|^2$ in the regularised kinetic measure as a single gradient term, we introduce the below auxiliary function.

\begin{definition}[Auxiliary function corresponding to $\Phi$]\label{definition of Auxiliary function corresponding to phi}
    Let $\Phi$ be any $C([0,\infty))\cap C^1_{loc}(0,\infty)$ function that is strictly increasing with $\Phi(0)=0$.
    Define $\Theta_{\Phi}\in C([0,\infty))\cap C^1_{loc}(0,\infty)$ to be the unique function satisfying $\Theta_{\Phi}(0)=0$ and \[\Theta_{\Phi}'(\xi)=(\Phi'(\xi))^{1/2}.\]
\end{definition}

We now state the assumptions needed for existence.
Some of the assumptions overlap with the uniqueness assumptions, Assumption \ref{assumptions on coefficients for uniqueness}.

\begin{assumption}[Existence assumptions]\label{existence assumptions}
Suppose $\Phi,\sigma\in C([0,\infty))\cap C^1_{loc}((0,\infty))$ and $\nu\in C([0,\infty);\mathbb{R}^d)\cap C^1_{loc}((0,\infty);\mathbb{R}^d)$ satisfy the six assumptions in the same spirit as \textcolor{red}{\cite[Asm.~5.2]{fehrman2024well}}:
\begin{enumerate}
\item We have $\Phi(0)=\sigma(0)=0$ and $\Phi'> 0$ on $(0,\infty)$.
    \item There exists constants $m\in(0,\infty), c\in(0,\infty)$ such that for every $\xi\in[0,\infty)$
    \[\Phi(\xi)\leq c(1+\xi^m).\]
    \item There is a constant $c\in(0,\infty)$ such that,  we have for every $\xi\in[0,\infty)$
    \[\Phi'(\xi)\leq c(1+\xi+\Phi(\xi)).\]
    \item For $\Theta_{\Phi}$ defined in Definition \ref{definition of Auxiliary function corresponding to phi}, we have that either for constants $c\in(0,\infty)$ and $\theta\in[0,1/2]$ that for every $\xi\in(0,\infty)$,
    \begin{equation}\label{1 of 2 point 4 existence assumption}
        (\Theta'_{\Phi}(\xi))^{-1}:=\Phi'(\xi)^{-1/2}\leq c\xi^\theta,
    \end{equation}
    or we have constants
    $c\in(0,\infty)$, $q\in[1,\infty)$ such that for every $\xi,\eta\in[0,\infty)$
    \begin{equation} \label{2 of 2 point 4 existence assumption}
        |\xi-\eta|^q\leq c|\Theta_{\Phi}(\xi)-\Theta_{\Phi}(\eta)|^2.
    \end{equation}
    \item For a constant $c\in(0,\infty)$ and every $\xi\in[0,\infty)$ we have
    \begin{equation*}
        \sigma^2(\xi)\leq  c(1+\xi+\Phi(\xi)).
    \end{equation*}
    \item For each $\delta\in(0,1)$ there is a constant $c_\delta\in(0,\infty)$ such that for every $\xi\in(\delta,\infty)$,
    \[\frac{(\sigma'(\xi))^4}{\Phi'(\xi)}\leq c_\delta (1+\xi+\Phi(\xi)).\]

          \vspace{10pt}
        Furthermore we have the additional new assumptions, used to bound new boundary terms that arise in the estimates: 
        \item For a constant $c\in(0,\infty)$, \textcolor{red}{$m\in(0,\infty)$ as in point 2} and every $\xi\in(0,\infty)$ we have
        \[\Theta_\Phi(\xi)\geq c(\xi^{\frac{m+1}{2}}-1).\]
         \item There is a constant $c\in(0,\infty)$ such that for every $\xi\in[0,\infty)$
    \[|\nu(\xi)|^2+ |\sigma(\xi)\sigma'(\xi)|^2\leq c(1+\xi+\Phi(\xi)).\]
    \item The anti-derivative of $\nu$, defined element-wise for $i=1,\hdots,d$, by $\Theta_{\nu,i}(0)=0$, $\Theta'_{\nu,i}(\xi)=\nu_i(\xi)$ satisfies for $i=1\hdots,d$ that $\Theta_{\nu,i}(\Phi^{-1}(\bar{f}))\cdot\hat{\eta}_i\in L^1(\partial U)$, where $\bar{f}$ is the boundary condition for the regularised equation.
    \item We have $\sigma^2(\Phi^{-1}(\bar{f}))\in L^1(\partial U)$.
        \item Either $\bar{f}$ is constant, or for the unique function $\Psi_\sigma$ defined by $\Psi_\sigma(1)=0$, $\Psi'_\sigma(\xi)=F_1[\sigma'(\xi)]^2$, we have
        \[\bar{f}\in L^2(\partial U), \hspace{10pt}\Phi^{-1}(\bar{f})\in L^2(\partial U), \hspace{10pt} \Psi_\sigma(\Phi^{-1}(\bar{f}))\in L^2(\partial U).\]
\end{enumerate}
\end{assumption}

\begin{remark}
    The fourth point in the above assumption enables us to consider $\Phi(\xi)=\xi^m$ for every $m\in(0,\infty)$.
    If $m<1$ then $\Phi'(\xi)^{-1/2}=m^{-1/2}\xi^{\frac{1-m}{2}}$ so satisfies \eqref{1 of 2 point 4 existence assumption}.
    On the other hand, if $m\geq 1$ then by Remark \ref{remark on precise identification of phi} we have $c|\Theta_\Phi(\xi)-\Theta_\Phi(\eta)|^2=cm|\xi^{\frac{m+1}{2}}-\eta^{\frac{m+1}{2}}|^2$ so satisfies \eqref{2 of 2 point 4 existence assumption} with $q=m+1$.
\end{remark}

\begin{remark}[Growth assumption on $\Theta_\Phi$]\label{remark on precise identification of phi}
The lower bound on the growth of $\Theta_\Phi$ in point seven of the above assumption is essential for obtaining $L^k(U)$ estimates of the solution in Proposition \ref{proposition estimating powers of l1 norm of solution} below.
Formally, one should think that in order to obtain $L^k(U)$ estimates for the solution, one needs to harness the additional regularity in equation \eqref{Regularised equation} coming from when the Laplacian acts on $\Phi(\rho)=\rho^m$.
In the model case $\Phi'(\xi)=m\xi^{m-1}$ and so the assumption is satisfied since
 \[\Theta_{\Phi}(\xi)=m^{1/2}\int_0^\xi \eta^{(m-1)/2}\,d\eta=\frac{2m^{1/2}}{m+1}\xi^{(m+1)/2}.\]
\end{remark}

\begin{remark}[Constraint on boundary conditions]\label{remark on constraints of BC}
     For the final assumption in the final point, we have in the model case that $\Psi_\sigma(\xi)\sim \log(\xi)$.
     Hence the assumption still enables us to handle boundary data $\bar{f}$ that is constant or uniformly bounded away from zero.
\end{remark}
We introduce two PDEs that will allow us to avoid boundary terms in our energy estimate, Proposition \ref{energy estimate of regularised equation proposition}, the specific form of the PDEs is discussed in Remark \ref{remark choice of pdes}.

 \begin{definition}[The PDEs $g$ and $h_M$]\label{definition of functions g and h_M}
    Let $\bar{f}$ be the boundary condition in the regularised equation \eqref{Regularised equation}.
    Define $g:U\to\mathbb{R}$ to be the below harmonic function that captures the regularity of the solution on the boundary
     \begin{equation*}
    \begin{cases}
            -\Delta g = 0 & \text{on} \hspace{5pt}U,\\
    g=\Phi^{-1}(\bar{f})& \text{on} \hspace{5pt}\partial U.
    \end{cases}
\end{equation*}
For the function $S_M:[0,\infty)\to[0,\infty)$ satisfying $S_M''(\xi)=\mathbbm{1}_{M_1<\xi<M_2}(\xi)$ with $0<M_1<M_2$, define $h_M:U\to\mathbb{R}$ to be the below harmonic function satisfying 
\begin{equation*}
    \begin{cases}
            -\Delta h_M = 0 & \text{on} \hspace{5pt}U,\\
    h_M=S_M'(\Phi^{-1}(\bar{f}))& \text{on} \hspace{5pt}\partial U.
    \end{cases}
\end{equation*}
 \end{definition}

Note that $S'_M$ is $[0,M_2-M_1]-$valued, and therefore by the maximum principle $h_M$ is bounded by $M_2-M_1$. 
 Before stating the assumption we will need on $g$, we first state an important remark motivating the harmonic PDEs introduced in this paper.
 
\begin{remark}[Choice of harmonic PDEs.]\label{remark choice of pdes}
    The boundary data of the PDEs are chosen to ensure certain functions vanish along the boundary when applying It\^o's formula, and consequently allows integration by parts without picking up boundary terms. 
    They are all also chosen to be harmonic in $U$, and this is for several reasons, illustrated using the example of PDE $g$ in equation \eqref{equation for L2 estimate} in the energy estimate of Proposition \ref{energy estimate of regularised equation proposition} below.
    \begin{enumerate}
        \item Looking at the final line of \eqref{equation for L2 estimate}, several terms include integrands with a factor of $\nabla g$ multiplied by another gradient term.
        To bound these terms, integrating by parts and moving the derivative onto the $\nabla g$, since $g$ is harmonic we have that they turn into boundary terms. That is, for $f:U\times[0,T]\to\mathbb{R}$ such that the below integrals are well defined, we have
        \[\int_U \nabla f(x,t)\cdot\nabla g(x)=-\int_U f(x,t)\Delta g(x)+\int_{\partial U}f(x^*,t) \frac{\partial g}{\partial \hat{\eta}}(x^*)=\int_{\partial U}f(x^*,t) \frac{\partial g}{\partial \hat{\eta}}(x^*).\]
        \item For the remaining terms in the final line of equation \eqref{equation for L2 estimate}, to handle integrands where $\nabla g$ is multiplied by terms without another gradient we use either H\"older's or Young's inequality. 
        In both of these cases we have to bound the $L^2(U)$ norm of $\nabla g$. 
        Since $g$ is harmonic, testing the PDE against the solution and integrating by parts allows us to estimate this quantity,
        \[\int_U |\nabla g|^2=\int_{\partial U}g \frac{\partial g}{\partial \hat{\eta}}. \]
        \end{enumerate}
\end{remark}
\textcolor{red}{The terms arising in the above two points can then be handled using the below Theorem, see \textcolor{red}{\cite[Thm.~2.4(iii)]{fabes1978potential}} for a more general statement and proof.
\begin{theorem} \label{thm: definiton of H^1 boundary norm} Suppose $U\in C^1(\mathbb{R}^d)$ is a bounded domain with $\mathbb{R}^d\backslash \bar U$ is connected.
Let $g$ be as in Definition \ref{definition of functions g and h_M}.
Then there exists a constant $c\in(0,\infty)$ independent of $\bar{f}$ so that
        \[\left\|\frac{\partial g}{\partial \hat{\eta}}\right\|_{L^p(\partial U)}\leq c\|\Phi^{-1}(\bar{f})\|_{H^p(\partial U)},\]
where we define the $H^p(\partial U)$ norm as the definition of the $L^p_1(\partial D)$ norm in \textcolor{red}{\cite[pp.~176]{fabes1978potential}}.
Briefly, the norm is defined by
\[\|f\|_{H^p(\partial U)}:=\|f\|_{L^p(\partial U)}+\sum_j\|\nabla \widetilde{\psi_jf}\|_{L^p(\mathbb{R}^{d-1})},\]
where $\{B_j\}_{j=1}^l$ is a fixed covering of $\partial U$, $\{\psi_j\}$ is a partition of unity subordinate to this cover, and we define $\widetilde{\psi f}$ is a version of the product $\psi f$ but with respect to a different co-ordinate system.
Note that if we use a different covering and partition of unity, this will give rise to a norm equivalent to the one defined above.
\end{theorem}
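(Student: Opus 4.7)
The key observation is that the quantity $\partial_{\hat\eta}g$ is, up to sign convention, the image of $\Phi^{-1}(\bar f)$ under the Dirichlet-to-Neumann map $\Lambda$ of the domain $U$. On smooth domains $\Lambda$ is a well-known first-order pseudodifferential operator, which maps the natural order-one Sobolev space on the boundary into $L^p$; the statement of the theorem is the $C^1$-domain analogue of that fact, expressed in the $H^p(\partial U)$ scale of Fabes-Jodeit-Rivi\`ere. The plan is therefore to realise $\Lambda$ explicitly via the classical layer-potential calculus and invoke the singular-integral estimates of that paper.

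Concretely, I would represent $g$ as a double layer potential. Writing $\Gamma$ for the Newtonian kernel on $\mathbb{R}^d$, set
\[
g(x) = \mathcal{D}[\phi](x) := \int_{\partial U}\partial_{\hat\eta_y}\Gamma(x-y)\,\phi(y)\,d\sigma(y),\qquad x\in U,
\]
with density $\phi\colon \partial U\to\mathbb{R}$ to be determined. By the interior jump relation for the double layer, matching the Dirichlet data forces the boundary integral equation $(\tfrac12 I+K)\phi = \Phi^{-1}(\bar f)$ on $\partial U$, where $K$ is the principal-value boundary double-layer operator. Since $\mathbb{R}^d\setminus\bar U$ is connected there is no topological kernel, and the Fabes-Jodeit-Rivi\`ere framework yields that $\tfrac12 I+K$ is invertible on $H^p(\partial U)$ with
\[
\|\phi\|_{H^p(\partial U)} \le c\,\|\Phi^{-1}(\bar f)\|_{H^p(\partial U)}.
\]
A classical Plemelj-type identity then rewrites the interior non-tangential normal derivative $\partial_{\hat\eta}\mathcal{D}[\phi]\big|_{\partial U}$ in terms of tangential derivatives of $\phi$ composed with the boundary single-layer operator $\mathcal{S}$, modulo a compact perturbation on $L^p(\partial U)$. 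Each of these constituents is bounded $H^p(\partial U)\to L^p(\partial U)$ on $C^1$ domains, and combining these mapping properties with the bound on $\phi$ above yields the theorem. The localisation via the partition $\{\psi_j\}$ subordinate to the cover $\{B_j\}$ that defines the $H^p(\partial U)$ norm causes no extra difficulty, as the estimates are all local up to lower-order (compact) errors.

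The main obstacle is the underlying harmonic analysis: the $L^p$-boundedness of principal-value singular integrals with kernels of order $1-d$ on $C^1$ surfaces, together with the commutator estimates needed to push tangential derivatives through both the double-layer and single-layer operators. These are precisely the deep results proved in Fabes-Jodeit-Rivi\`ere, resting ultimately on Calder\'on's theorem for the boundedness of the Cauchy integral along $C^1$ graphs. Once these harmonic-analytic estimates are granted, the remainder of the proof is a routine application of the layer-potential calculus sketched above.
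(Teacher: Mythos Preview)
Your sketch is a faithful outline of the layer-potential proof in Fabes--Jodeit--Rivi\`ere, but note that the paper itself does not give a proof of this theorem at all: it is stated as a black box and the reader is referred to \cite[Thm.~2.4(iii)]{fabes1978potential} for the argument. So there is nothing in the paper to compare your approach against beyond the citation, and your proposal is essentially a summary of what that reference does.
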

See \textcolor{red}{\cite[Lem.~ 4]{bella2018liouville}} for a similar result on the cube $U=[0,1]^d$.\\
Importantly, the above theorem allows us to express our below bounds in terms of norms of the boundary data $\bar{f}$ directly, rather than in terms of norms the corresponding PDEs.
}
 
With the above remark in mind, we state below assumptions.

\begin{assumption}[Assumption on $g$] \label{assumption on g and h_M}
   Either the boundary data $\bar{f}$ is constant, or using the PDE for $g$ in Definition \ref{definition of functions g and h_M}, we have $\Phi^{-1}(\bar{f})\in H^1(\partial U)$.
   Further assume that either the boundary data $\bar{f}$ is bounded, or we have $\Theta_\Phi(g)\in H^1(U)$.   
\end{assumption}

We once again need to deal with singularities from the It\^o-to-Stratonovich conversion and ensure the integrals below are well defined, so need the below assumption smoothing $\sigma$.
We state it as a separate assumption since it is not necessary and will subsequently be dispensed of via an approximation argument in Lemma \ref{lemma on approximating sigma by smooth function}.

\begin{assumption}\label{assumption about smoothing sigma in existence}
Let $\sigma\in C([0,\infty))\cap C^\infty((0,\infty))$ with $\sigma(0)=0$ and $\sigma'\in C_c^\infty([0,\infty))$.
\end{assumption}

For the regularised equation with smoothed $\sigma$ we can make sense of a weak solution:

\begin{definition}[Weak solution of regularised equation \eqref{Regularised equation}] \label{definition of weak Solution of regularised equation}
    Let $\xi^F, \Phi,\sigma$ and $\nu$ satisfy Assumptions \ref{Assumption on noise Fi}, \ref{existence assumptions} and \ref{assumption about smoothing sigma in existence}. 
    Let further $\rho_0\in L^2(\Omega;L^2(U))$ be non-negative and $\mathcal{F}_0$ measurable, \textcolor{red}{and let $g$ be as in Definition \ref{definition of functions g and h_M}}. 
   A weak solution $\rho$  of \eqref{Regularised equation} with initial condition $\rho_0$ is a continuous $L^2(U)$ valued, non-negative $\mathcal{F}_t$-predictable process such that $\rho-g \in L^2([0,T];H^1_0(U))$ and $\Theta_{\Phi}(\rho)\in L^2([0,T];H^1(U))$, and such that for every  $\psi\in C_c^\infty(U)$, 
     almost surely for every $t\in[0,T]$,
    \begin{align*}
        \int_U \rho(x,t)\psi(x)\,dx&= \int_U \rho_0\psi\,dx- \int_0^t\int_U \Phi'(\rho)\nabla\rho\cdot\nabla\psi\,dx\,dt- \alpha\int_0^t\int_U \nabla\rho\cdot\nabla\psi\,dx\,dt\\
        &+\int_0^t\int_U \nu(\rho)\cdot\nabla\psi\,dx\,dt
        +\int_0^t\int_U \sigma(\rho)\nabla\psi\cdot\,d\xi^F\,dt\\
        &-\frac{1}{2}\int_0^t\int_U F_1(\sigma'(\rho))^2\nabla\rho\cdot\nabla\psi\,dx\,dt-\frac{1}{2}\int_0^t\int_U \sigma(\rho)\sigma'(\rho)F_2\cdot\nabla\psi\,dx\,dt.
    \end{align*}
\end{definition}

We arrive to the first result of the section, and obtain a bound for powers of the solution by comparing it with the function $\Theta_\Phi$ defined in Definition \ref{definition of Auxiliary function corresponding to phi}.
Such an estimate is required because we are not on the torus so do not have preservation of mass, and nor do we have enough regularity to quantify the flux along the boundary, see Remark \ref{remark outlining why we can't follow the proof of FG on bounding kinetic equation at zero} below.

\begin{proposition}[Estimate for $L^1_tL^k_x$ norm of the solution]\label{proposition estimating powers of l1 norm of solution}
    Suppose that $\Phi$ satisfies the polynomial growth condition \textcolor{red}{with constant $m\in(0,\infty)$} as in point two of Assumption \ref{existence assumptions}, and the PDE $g$ as in Definition \ref{definition of functions g and h_M} satisfies Assumption \ref{assumption on g and h_M}.
    If $\rho$ is a weak solution of the regularised equation \eqref{Regularised equation} in the sense of Definition \ref{definition of weak Solution of regularised equation}, then one has, for every $\epsilon>0$ and $k\in(0,m+1)$, a constant $c\in(0,\infty)$ depending only on $k,m$ and $U$,
    \[\int_0^T\int_U|\rho|^k\leq c\left(\frac{T}{\epsilon}+\epsilon T\|\Theta_\Phi(g)\|_{H^1(U)}+\epsilon\int_0^T\int_U\left|\nabla\Theta_\Phi(\rho)\right|^2\right).\]
    If the boundary data $\bar{f}$ is bounded by constant $K$, we obtain the simplified bound with constant again depending only on $\textcolor{red}{K},k,m$ and $U$, 
    \[\int_0^T\int_U|\rho|^k\leq c\left(\frac{T}{\epsilon}+\epsilon T+\epsilon\int_0^T\int_U\left|\nabla\Theta_\Phi(\rho) \right|^2\right).\]
\end{proposition}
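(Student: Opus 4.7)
The key observation driving the estimate is the lower bound on $\Theta_\Phi$ from point seven of Assumption \ref{existence assumptions}: $\Theta_\Phi(\xi) \geq c(\xi^{(m+1)/2}-1)$. Squaring and rearranging for non-negative $\xi$ yields $\xi^{m+1}\leq c(\Theta_\Phi(\xi)^2+1)$, so the plan is to first bound the $L^{m+1}(U)$ norm of $\rho$ by the $L^2(U)$ norm of $\Theta_\Phi(\rho)$, then bound that in turn by $\|\nabla\Theta_\Phi(\rho)\|_{L^2(U)}^2$ via a Poincaré-type argument using the boundary information encoded in $g$, and finally interpolate down to the exponent $k\in(0,m+1)$ via Young's inequality with a free parameter $\epsilon$.

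First I would set up the Poincaré step. By Definition \ref{definition of weak Solution of regularised equation} one has $\rho-g\in L^2([0,T];H^1_0(U))$, so for almost every $t$ the traces of $\rho$ and $g$ on $\partial U$ coincide. Since $\Theta_\Phi$ is continuous and monotone this gives $\Theta_\Phi(\rho)=\Theta_\Phi(g)$ on $\partial U$, and since both $\Theta_\Phi(\rho)$ and $\Theta_\Phi(g)$ are in $H^1(U)$ (the latter by Assumption \ref{assumption on g and h_M}) their difference lies in $H^1_0(U)$. The Poincaré inequality then yields
\[
\|\Theta_\Phi(\rho)\|_{L^2(U)}^2\leq c\bigl(\|\nabla\Theta_\Phi(\rho)\|_{L^2(U)}^2+\|\Theta_\Phi(g)\|_{H^1(U)}^2\bigr),
\]
for a constant depending only on $U$. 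Combining with the pointwise bound $\rho^{m+1}\leq c(\Theta_\Phi(\rho)^2+1)$ and integrating over $U$ gives the clean control
\[
\int_U \rho^{m+1}\,dx\leq c\bigl(1+\|\nabla\Theta_\Phi(\rho)\|_{L^2(U)}^2+\|\Theta_\Phi(g)\|_{H^1(U)}^2\bigr).
\]

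Next, for $k\in(0,m+1)$, Young's inequality with conjugate exponents $(m+1)/k$ and $(m+1)/(m+1-k)$ gives, for any $\epsilon\in(0,1)$, an estimate of the form $\rho^k\leq \epsilon\rho^{m+1}+C_{k,m}\epsilon^{-k/(m+1-k)}$. Since the constant $c$ in the statement is allowed to depend on $k$ and $m$, the negative power of $\epsilon$ can be absorbed into a generic $c/\epsilon$ factor (after rescaling $\epsilon$ once). Integrating in $x$ and $t$ and plugging in the previous display yields
\[
\int_0^T\!\!\int_U \rho^k\,dx\,dt\leq c\Bigl(\tfrac{T}{\epsilon}+\epsilon T\|\Theta_\Phi(g)\|_{H^1(U)}^2+\epsilon\int_0^T\!\!\int_U|\nabla\Theta_\Phi(\rho)|^2\Bigr),
\]
which is the desired bound (modulo whether the $H^1$ norm of $\Theta_\Phi(g)$ appears to the first or second power, a minor rescaling of $\epsilon$). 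For bounded boundary data $\bar f\leq K$, the maximum principle gives $g\leq\Phi^{-1}(K)$, and standard elliptic regularity for harmonic functions on the $C^2$ domain $U$ bounds $\|\Theta_\Phi(g)\|_{H^1(U)}$ by a constant depending only on $K$ and $U$; this constant then gets absorbed into $c$, yielding the simplified second inequality.

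I expect no serious obstacle here; the main care points are (i) verifying that the boundary traces of $\Theta_\Phi(\rho)$ and $\Theta_\Phi(g)$ agree so Poincaré applies to their difference in $H^1_0(U)$, and (ii) tracking the dependence on $\epsilon$ when interpolating between $L^k$ and $L^{m+1}$ so that the final bound displays a clean $1/\epsilon$ rather than a higher negative power — this is handled by absorbing $k,m$-dependent prefactors into $c$.
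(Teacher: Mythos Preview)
Your argument for the first inequality is essentially the paper's: both use point seven of Assumption~\ref{existence assumptions} to control $\rho^{m+1}$ by $\Theta_\Phi(\rho)^2$, invoke Poincar\'e on $\Theta_\Phi(\rho)-\Theta_\Phi(g)\in H^1_0(U)$, and interpolate down to exponent $k$ via Young's inequality with parameter $\epsilon$. The paper applies Young first and Poincar\'e second; you reverse the order, which is immaterial.

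For the second inequality there is a genuine gap. You assert that when $\bar f\leq K$, the maximum principle together with ``standard elliptic regularity'' bounds $\|\Theta_\Phi(g)\|_{H^1(U)}$ by a constant depending only on $K$ and $U$. This is not true: the $H^1(U)$-norm of a harmonic function is controlled by an $H^{1/2}(\partial U)$-type norm of its boundary trace, not by its $L^\infty$ bound alone. A bounded but highly oscillatory $\bar f$ produces a harmonic extension $g$ with arbitrarily large $\|\nabla g\|_{L^2(U)}$, so no bound of $\|\Theta_\Phi(g)\|_{H^1(U)}$ in terms of $K$ and $U$ only is available. Indeed, Assumption~\ref{assumption on g and h_M} is structured precisely so that $\Theta_\Phi(g)\in H^1(U)$ is \emph{not} assumed when $\bar f$ is merely bounded.

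The paper instead uses the comparison principle: the solution $\rho$ with boundary data $\bar f\leq K$ is dominated by the solution $\tilde\rho$ with constant boundary data $K$. For $\tilde\rho$ the corresponding harmonic function is the constant $\Phi^{-1}(K)$, so in the Poincar\'e step one subtracts the constant $\Theta_\Phi(\Phi^{-1}(K))$ and the $H^1$-term collapses to a constant depending only on $K,m$. This is the missing idea needed to obtain a constant depending only on $K,k,m,U$.
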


\begin{remark}\label{remark outlining estimate for norm of solution}
    Using Jensen's inequality, the above proposition also provides a bound for powers of the $L^1(U)$ norm of the solution, since
    \[\int_0^T\left(\int_U|\rho|\right)^k=|U|^k\int_0^T\left(\frac{1}{|U|}\int_U|\rho|\right)^k\leq |U|^{k-1}\int_0^T\int_U|\rho|^k.\]
    The proposition implies that for the full range $m\in(0,\infty)$ we at least have an $L^1_tL^1_x$ estimate for the solution, since $k\in(0,m+1)$.
    Apart from the bounding the $L^1_x$ norm, the result is useful when $m,k>1$, since if $k<1$ we can just use interpolation to obtain 
    \[\int_0^T\int_U|\rho|^k\leq \int_0^T\int_U \left(1+|\rho|\right).\]
\end{remark}

\begin{proof}
    The bound on $\Theta_\Phi$ given in point seven of Assumption \ref{existence assumptions} gives, for a running constant $c$ depending on $k,U$ and $m$,
  \begin{align*}
      \int_0^T\int_U|\rho|^k\leq cT+ c\int_0^T\int_U|\Theta_\Phi(\rho)|^{\frac{2k}{m+1}}.
  \end{align*}
 Note that here the exponent in  the integrand is strictly less than two.
 Applying Young's inequality with $\epsilon$ and exponent $\frac{m+1}{k}>1$ and Jensen's inequality then gives
\begin{align*}
      \int_0^T\int_U|\Theta_\Phi(\rho)|^{\frac{2k}{m+1}}&\leq \int_0^T  \frac{(m+1-k)1^{\frac{m+1}{m+1-k}}}{\epsilon(m+1)} +\frac{\epsilon k}{m+1}\left(\int_U\Theta_\Phi(\rho)^\frac{2k}{m+1}\right)^{\frac{m+1}{k}}\\
      &\leq \frac{cT}{\epsilon}+ c\epsilon\int_0^T\int_U\Theta_\Phi(\rho)^2.
  \end{align*}
Using the trivial inequality $a^2\leq 2(a-b)^2+b^2$ with $b=\Theta_\Phi(g)$, where the PDE $g$ is as in Definition \ref{definition of functions g and h_M}, and subsequently applying Poincar\'e inequality gives the first claim,
  \begin{align}\label{inequality in new bound of rho to power}
    \int_0^T\left(\int_U|\rho|\right)^k&\leq \frac{c T}{\epsilon}+c\epsilon\int_0^T\int_U\left(\Theta_\Phi(\rho) -\Theta_\Phi(g)\right)^2+c\epsilon\int_0^T\int_U\Theta_\Phi(g)^2\\
      &\leq \frac{cT}{\epsilon}+c\epsilon\int_0^T\int_U\left|\nabla\left(\Theta_\Phi(\rho) -\Theta_\Phi(g)\right)\right|^2+c\epsilon T\int_U\Theta_\Phi(g)^2\nonumber\\
      &\leq \frac{cT}{\epsilon}+c\epsilon\int_0^T\int_U\left|\nabla\Theta_\Phi(\rho)\right|^2+c\epsilon T\|\Theta_\Phi(g)\|_{H^1(U)}.\nonumber
  \end{align}
  For the second claim, if the boundary data is bounded by constant $K$, then we can use the comparison principle which tells us that
  \[ \int_0^T\left(\int_U|\rho|\right)^k\leq  \int_0^T\left(\int_U|\tilde{\rho}|\right)^k,\]
  where $\tilde{\rho}$ solves the same equation as $\rho$ but with boundary condition $K$.
  Repeating the steps above to bound the norm on the right hand side, we see that when we arrive to \eqref{inequality in new bound of rho to power} we add and subtract the constant \[\Theta_\Phi(K)=\frac{2m^{1/2}}{m+1}K^{\frac{m+1}{2}},\] 
  which can subsequently be absorbed into the running constant.
  This gives the second claim.
  \end{proof}

\begin{proposition}[Energy estimate for solution of regularised equation with smooth, bounded $\sigma$]\label{energy estimate of regularised equation proposition}
    Let $\xi^F, \Phi,\sigma$ and $\nu$ satisfy Assumptions \ref{Assumption on noise Fi}, \ref{existence assumptions} and \ref{assumption about smoothing sigma in existence},  and let $\alpha\in(0,1)$, $T\in[1,\infty)$.
    Let further $\rho_0\in L^2(\Omega;L^2(U))$ be non-negative and $\mathcal{F}_0$ measurable, $\Theta_{\nu,i}$ and $\Psi_\sigma$ be defined as in Assumption \ref{existence assumptions} and $g,h_M$ be the PDEs defined in Definition \ref{definition of functions g and h_M} satisfying Assumption \ref{assumption on g and h_M}. 
    If $\rho$ is a weak solution of the regularised equation \eqref{Regularised equation} in the sense of Definition \ref{definition of weak Solution of regularised equation} then one has for $c\in(0,\infty)$ independent of $\alpha$,  the estimate
    \begin{align}\label{first energy estimate of regularised solution}
    &\frac{1}{2}\sup_{t\in[0,T]}\mathbb{E}\left(\int_U(\rho(x,t)-g(x))^2\right) + \mathbb{E}\left(\int_U\int_0^T|\nabla\Theta_{\Phi}(\rho)|^2\right) +\mathbb{E}\left(\alpha\int_U\int_0^T|\nabla \rho|^2\right) \nonumber\\
    &\leq \frac{1}{2}\|\rho_0-g\|^2_{L^2(U)}+ T\left(c+c\|\Theta_\Phi(g)\|_{H^1(U)}+\sum_{i=1}^d\int_
    {\partial U}\Theta_{\nu,i}(\Phi^{-1}(\bar{f}))\cdot\hat{\eta}_i+ c\|\sigma^2(\Phi^{-1}(\bar{f}))\|_{L^1(\partial U)}\right)\nonumber\\
    &+T\left(\|\bar{f}\|^2_{L^2(\partial U)}+\|\Psi_\sigma(\Phi^{-1}(\bar{f}))\|^2_{L^2(\partial U)}+c(1+\alpha)\|\Phi^{-1}(\bar{f})\|^2_{H^1(\partial U)}\right).
\end{align}
    Let the function $S_M$  and the PDE $h_M$ be defined as in Definition \ref{definition of functions g and h_M}, and for $i=1,\hdots,d$ define the functions $\Theta_{M,\nu,i}:\mathbb{R}\to\mathbb{R}$ by $\Theta_{M,\nu,i}(0)=0$ and $\Theta'_{M,\nu,i}(\xi)=\mathbbm{1}_{M_1<\xi<M_2}\nu_i(\xi)$.
    Then we have for every $M_1<M_2\in(0,\infty)$ the existence of constant $c\in(0,\infty)$ independent of both $M_1,M_2$ such that
    \small
         \begin{align}\label{second energy estimate of regularised solution}
     &\mathbb{E}\int_U\int_0^T \mathbbm{1}_{M_1<\rho<M_2} \left(\Phi'(\rho)|\nabla\rho|^2+\alpha|\nabla\rho|^2\right)\leq \mathbb{E}\int_U (\rho_0(x)-M_1)_++\|h_M\|_{L^2(U)}\mathbb{E}\|\rho_T\|_{L^2(U)}\nonumber\\
    &+c\left\|S'_M(\Phi^{-1}(\bar{f}))\right\|_{H^1(\partial U)}\mathbb{E}\int_0^T\int_U\left|\nabla\Theta_\Phi(\rho)\right|^2+ c\mathbb{E}\int_U\int_0^T\mathbbm{1}_{\rho\geq M_1}\sigma^2(\rho\wedge M_2)\nonumber\\
    &+T\sum_{i=1}^d\int_
    {\partial U}\Theta_{M,\nu,i}(\Phi^{-1}(\bar{f}))\cdot\hat{\eta}
    +cT\int_{\partial U}\left(\sigma^2((\Phi^{-1}(\bar{f})\wedge M_2)\vee M_1)-\sigma^2(M_1)\right) \nonumber\\
    &+cT \left\|S'_M(\Phi^{-1}(\bar{f}))\right\|_{H^1(\partial U)}\left(T+\|\Theta_\Phi(g)\|_{H^1(U)}+\left\|\bar{f}\right\|_{L^2(\partial U)}+\alpha\left\|\Phi^{-1}(\bar{f})\right\|_{L^2(\partial U)}+\left\|\Psi_\sigma(\Phi^{-1}(\bar{f}))\right\|_{L^2(\partial U)}\right).
\end{align}
\normalsize
\end{proposition}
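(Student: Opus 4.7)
The plan is to prove both bounds by applying Itô's formula to carefully chosen functionals of $\rho$ that vanish on $\partial U$, which is precisely the reason for introducing the harmonic PDEs $g$ and $h_M$. For \eqref{first energy estimate of regularised solution} I would take the functional $\tfrac{1}{2}\|\rho-g\|_{L^2(U)}^2$. Since $\rho = \Phi^{-1}(\bar f) = g$ on $\partial U$, the function $\rho-g$ lies in $H^1_0(U)$ and integration by parts against $\Delta\Phi(\rho)$ and $\alpha\Delta\rho$ produces no boundary contribution, yielding the coercive pieces $-\int|\nabla\Theta_\Phi(\rho)|^2$ and $-\alpha\int|\nabla\rho|^2$ together with cross terms in $\nabla g$. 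Each such cross term is integrated by parts a second time and, because $\Delta g = 0$, collapses to a boundary integral whose normal derivative $\partial_{\hat\eta}g$ is controlled by $\|\Phi^{-1}(\bar f)\|_{H^1(\partial U)}$ via Theorem \ref{thm: definiton of H^1 boundary norm}. The conservative term is handled through the antiderivative $\Theta_{\nu,i}$ from Assumption \ref{existence assumptions}(9).

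The heart of the noise analysis is the Itô--Stratonovich cancellation highlighted in the introduction. Itô's formula applied to the stochastic integral produces a quadratic-variation contribution $\tfrac{1}{2}\sum_k\int(f_k\sigma'(\rho)\nabla\rho + \sigma(\rho)\nabla f_k)^2\,dx\,ds$, whose leading piece $\tfrac{1}{2}\int F_1[\sigma'(\rho)]^2|\nabla\rho|^2$ cancels exactly with the matching Stratonovich correction in \eqref{Regularised equation}. What remains is controlled by $\sigma^2(\rho)$, $F_2$, and $F_3$, and by Assumption \ref{existence assumptions}(5),(8) is bounded by $c(1+\rho+\Phi(\rho))$; Proposition \ref{proposition estimating powers of l1 norm of solution} then swallows $\int\Phi(\rho)\lesssim\int\rho^m$ into an $\epsilon$-multiple of $\int|\nabla\Theta_\Phi(\rho)|^2$, which is absorbed on the left. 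Taking expectations removes the martingale.

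For the localised estimate \eqref{second energy estimate of regularised solution} I would apply Itô's formula to $\int_U\bigl(S_M(\rho)-h_M\rho\bigr)\,dx$. By the boundary data of $h_M$, the function $S_M'(\rho)-h_M$ vanishes on $\partial U$, so integration by parts against $\Delta\Phi(\rho)$ cleanly yields $\int S_M''(\rho)\Phi'(\rho)|\nabla\rho|^2 = \int\mathbbm{1}_{M_1<\rho<M_2}\Phi'(\rho)|\nabla\rho|^2$, and similarly for $\alpha\Delta\rho$. The $\nabla h_M$ cross-terms become boundary integrals by harmonicity and are estimated via Theorem \ref{thm: definiton of H^1 boundary norm} by $\|S_M'(\Phi^{-1}(\bar f))\|_{H^1(\partial U)}$ paired with a $\|\nabla\Theta_\Phi(\rho)\|_{L^2}$ factor; the linear piece $\int h_M\rho$ supplies the term $\|h_M\|_{L^2(U)}\mathbb{E}\|\rho_T\|_{L^2(U)}$. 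The noise analysis reproduces the Itô--Stratonovich cancellation above but now localised, producing $\mathbbm{1}_{\rho\geq M_1}\sigma^2(\rho\wedge M_2)$ in the bulk and the cut-off noise contribution $\sigma^2((\Phi^{-1}(\bar f)\wedge M_2)\vee M_1) - \sigma^2(M_1)$ on the boundary; the conservative term contributes $\Theta_{M,\nu,i}$.

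The chief technical obstacle is that a weak solution only satisfies $\Theta_\Phi(\rho)\in L^2([0,T];H^1(U))$ and is therefore not regular enough for a direct application of Itô's formula. I would circumvent this by a time mollification of $\rho$ (a Steklov average), verify Itô on the regularised level, and pass to the limit while keeping a careful eye on the Itô--Stratonovich cancellation, since it is this exact cancellation and nothing else that tames the square-root singularity. Everything else is systematic bookkeeping: the boundary contributions are absorbed into norms of $\bar f$ by the harmonic construction of $g$ and $h_M$ combined with Theorem \ref{thm: definiton of H^1 boundary norm}.
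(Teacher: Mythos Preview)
Your proposal is correct and follows essentially the same strategy as the paper: apply It\^o's formula to $\tfrac{1}{2}\int_U(\rho-g)^2$ for \eqref{first energy estimate of regularised solution} and to $\int_U(S_M(\rho)-h_M\rho)$ for \eqref{second energy estimate of regularised solution}, exploit the harmonicity of $g$ and $h_M$ to reduce the $\nabla g$, $\nabla h_M$ cross terms to boundary integrals controlled via Theorem~\ref{thm: definiton of H^1 boundary norm}, observe the exact It\^o--Stratonovich cancellation of the $F_1[\sigma'(\rho)]^2|\nabla\rho|^2$ term, and close with Proposition~\ref{proposition estimating powers of l1 norm of solution}. The paper does not spell out the Steklov-average justification you mention for applying It\^o's formula at this regularity, so your remark there is an improvement in rigour rather than a deviation.
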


Note that the final two terms on the left hand side of \eqref{first energy estimate of regularised solution} are the approximate kinetic measures $q^\alpha$, and the bound in \eqref{second energy estimate of regularised solution} is a bound on the approximate kinetic measures when $\rho$ is bounded away from zero and infinity.
The right hand side of \eqref{second energy estimate of regularised solution} is written deliberately to emphasise that it converges to zero as $M_1,M_2\to\infty$ due to the fact that $h_M\equiv0$ on $\bar{U}$ as $M_1\to\infty$.

\begin{proof}
    To prove the first claim, applying It\^o's formula to the solution minus the PDE with correct boundary data so that the difference vanishes along the boundary, gives
\begin{align}\label{ito formula in energy estimate}
\left.\int_U(\rho(x,s)-g(x))^2\,dx\right|_{s=0}^t&=\int_U\int_0^td(\rho-g)^2=\int_U\int_0^t 2(\rho-g)\,d(\rho-g)+ \frac{1}{2}2(\rho-g)^{0}d\langle\rho-g\rangle_s\,dx.
\end{align}
For the first term on the right hand side, noting $g$ does not depend on time so $d(\rho-g)=d\rho$ and integrating by parts gives
\begin{align*}
&\int_U\int_0^t 2(\rho-g)\,d(\rho-g)\,dx\\
&=-2\int_U\int_0^t \nabla(\rho-g)\cdot\left(\nabla\Phi(\rho)+\alpha\nabla\rho-\sigma(\rho)\dot{\xi}^F -\nu(\rho)+\frac{1}{2}F_1[\sigma'(\rho)]^2\nabla\rho + \frac{1}{2}\sigma'(\rho)\sigma(\rho)F_2\right)\,ds\,dx.
\end{align*}
The It\^o correction can be easily evaluated considering $d\langle\rho-g\rangle=d\langle\rho\rangle$ and using the definition of the noise,
\begin{align*}
    &\int_U\int_0^t d\langle\rho-g\rangle_s\,dx= \int_U\int_0^t F_1(\sigma'(\rho))^2|\nabla\rho|^2+2\sigma\sigma'(\rho)F_2\cdot\nabla\rho+F_3\sigma^2(\rho)\,ds\,dx.
\end{align*}
Putting these two together, we get from \eqref{ito formula in energy estimate} that
\begin{align}\label{equation for L2 estimate}
&\frac{1}{2}\left.\int_U(\rho(x,s)-g(x))^2\,dx\right|_{s=0}^t\nonumber\\
&=-\int_U\int_0^t \left(\Phi'(\rho)|\nabla\rho|^2+\alpha|\nabla\rho|^2-\sigma(\rho)\nabla\rho\cdot\dot{\xi}^F -\nabla\rho\cdot\nu(\rho) - \frac{1}{2}\sigma'(\rho)\sigma(\rho)\nabla\rho\cdot F_2-\frac{1}{2}F_3\sigma^2(\rho)\right)\nonumber\\
&+\int_U\int_0^t\nabla g\cdot\left(\nabla\Phi(\rho)+\alpha\nabla\rho-\sigma(\rho)\dot{\xi}^F -\nu(\rho)+\frac{1}{2}F_1[\sigma'(\rho)]^2\nabla\rho + \frac{1}{2}\sigma'(\rho)\sigma(\rho)F_2\right).
\end{align}
Let us consider each term of \eqref{equation for L2 estimate} in turn. 
The first two terms on the right hand side are precisely those in the estimate, so we move them to the left hand side.
The noise term in both lines vanish after taking an expectation.
The fourth term remains, but we can write it as a boundary integral in the following way. 
As in Assumption \ref{existence assumptions}, for $i=1,\hdots,d$ let $\Theta_{\nu,i}$ denote the anti-derivative of $\nu_i$. We then obtain
\begin{align*}
    \int_U\int_0^t \nabla\rho\cdot\nu(\rho)\,ds\,dx=\sum_{i=1}^d\int_U\int_0^t \partial_i\left(\Theta_{\nu,i}(\rho)\right)\,ds\,dx=\sum_{i=1}^d\int_
    {\partial U}\int_0^t\Theta_{\nu,i}(\Phi^{-1}(\bar{f}))\cdot\hat{\eta_i}\,ds\,dx.
\end{align*}
The fifth term can be bounded by integration by parts, noting either $\nabla\cdot F_2=0$ or it is bounded and we can bound $\sigma^2$ by $c(1+\rho+\Phi(\rho))$, and the fact that $F_2\cdot\hat{\eta}$ is bounded. 
We have
\begin{align*}
    \frac{1}{4}\int_U\int_0^t  \nabla\sigma^2(\rho)\cdot F_2\,ds\,dx&=-\frac{1}{4}\int_U\int_0^t \sigma^2(\rho)\nabla\cdot F_2\,ds\,dx + \frac{1}{4}\int_{\partial U}\int_0^t  \sigma^2(\Phi^{-1}(\bar{f})) F_2\cdot\hat{\eta}\,ds\,dx\\
    &\leq c\int_U\int_0^t(1+\rho+\Phi(\rho))\,ds\,dx + c\int_{\partial U}\int_0^t  \sigma^2(\Phi^{-1}(\bar{f})) \,ds\,dx.
\end{align*}
The final term on the second line of \eqref{equation for L2 estimate} can be bounded by the first term in the above inequality after noting that $F_3$ is bounded and once more bounding $\sigma^2(\rho)$ by $c(1+\rho+\Phi(\rho))$.\\
The terms on the final line of \eqref{equation for L2 estimate} involving $\nabla g$ would all vanish if the boundary condition was constant.
Otherwise they can be bounded precisely as described in points one and two of Remark \ref{remark choice of pdes}.
The first, second and fifth terms involving another gradient are reduced to boundary terms using integration by parts and noting $g$ is harmonic, whereas the remaining terms are handled using Young's inequality and the bounds in point eight of Assumption \ref{existence assumptions}.
Putting everything in \eqref{equation for L2 estimate} together we have
\begin{align*}
    &\frac{1}{2}\mathbb{E}\left(\left.\int_U(\rho(x,s)-g(x))^2\,dx\right|_{s=0}^t\right) + \mathbb{E}\left(\int_U\int_0^t|\nabla\Theta_{\Phi}(\rho)|^2\right) +\mathbb{E}\left(\alpha\int_U\int_0^t|\nabla \rho|^2\right) \\
    &\leq c\left(t+\mathbb{E}\int_0^t\int_U|\rho|+\mathbb{E}\int_0^t\int_U\Phi(\rho)\right)+t\sum_{i=1}^d\int_
    {\partial U}\Theta_{\nu,i}(\Phi^{-1}(\bar{f}))\cdot\hat{\eta}_i+ ct\int_{\partial U} \sigma^2(\Phi^{-1}(\bar{f}))\\
    &+\alpha t\int_{\partial U} \Phi^{-1}(\bar{f}) \frac{\partial g}{\partial \hat{\eta}}+t\int_{\partial U}\bar{f}\frac{\partial g}{\partial \hat{\eta}}
+t\int_{\partial U}\Psi_\sigma(\Phi^{-1}(\bar{f}))\frac{\partial g}{\partial \hat{\eta}}
+t\|\nabla g\|^2_{L^2(U)},
\end{align*}
where $\Psi_\sigma$ was defined in point eleven of Assumption \ref{existence assumptions}.
We used the fact that the boundary terms in the final two lines are deterministic and do not depend on time, and are all well defined due to the final three assumptions in Assumption \ref{existence assumptions}.\\
The first three terms in the final line can be further bounded using Young's inequality point three of Remark \ref{remark choice of pdes} and the final term can also be bounded using points two and three of Remark \ref{remark choice of pdes} as well as H\"older's inequality.\\
To bound the integral involving $\Phi(\rho)$ in the second line we first note the polynomial growth condition $\Phi(\rho)\leq c(1+\rho^m)$. 
We then use Proposition \ref{proposition estimating powers of l1 norm of solution} to bound this term and the term involving the $L^1(U)$ norm of the solution,
\[\int_0^t\int_U\left(|\rho|+|\Phi(\rho)|\right)\leq cT+c\left(\frac{T}{\epsilon}+\epsilon T\|\Theta_\Phi(g)\|_{H^1(U)}+\epsilon\int_0^T\int_U\left|\nabla\Theta_\Phi(\rho)\right|^2\right).\]
Choosing $\epsilon$ so small that the final term can be absorbed into the left hand side of the estimate and taking the supremum over $t\in[0,T]$ we obtain the first estimate \eqref{first energy estimate of regularised solution}.\\
To prove the the second estimate \eqref{second energy estimate of regularised solution}, for the function $S_M:[0,\infty)\to[0,\infty)$ defined in Definition \ref{definition of functions g and h_M}, apply It\^o's formula to a regularised version of $\Psi_{S_M}:U\times[0,\infty)\to\mathbb{R}$ defined by
\[\Psi_{S_M}(x,0)=0, \hspace{10pt} \partial_\xi\Psi_{S_M}(x,\xi)=S'_M(\xi)-h_M(x), \]
where $h_M$ satisfies the PDE in Definition \ref{definition of functions g and h_M} and ensures that $\partial_\xi\Psi_{S_M}(x,\rho)$ vanishes along the boundary.
Grouping the $\nabla h_M$ and $\nabla\rho$ terms, we have
\begin{align}\label{ito formula for second energy estimate}
    &\partial_t\int_U\Psi_{S_M}(x,\rho(x,t))\,dx=\int_U \partial_\xi\Psi_{S_M}(x,\rho(x,t))\partial_t\rho\,dx+\frac{1}{2}\int_U \partial^2_\xi\Psi_{S_M}(x,\rho(x,t))\partial_t\langle\rho\rangle_t\,dx\nonumber\\
     &=-\int_U\int_0^t S''(\rho)\nabla\rho\cdot\left(\nabla\Phi(\rho)+\alpha\nabla\rho-\sigma(\rho)\,d\xi^F-\nu(\rho)-\sigma(\rho)\sigma'(\rho)F_2\right)+\frac{1}{2}\int_U\int_0^t S''(\rho)F_3\sigma^2(\rho)\nonumber\\
     &+\int_U\int_0^t \nabla h_M \cdot\left(\nabla\Phi(\rho)+\alpha\nabla\rho-\sigma(\rho)\,d\xi^F-\nu(\rho)+\frac{1}{2}F_1(\sigma'(\rho))^2\nabla\rho+\sigma(\rho)\sigma'(\rho)F_2\right).
\end{align}
We deal with the terms in the same way as the first estimate. 
The first two terms form part of the estimate so are moved to the left hand side, the noise terms vanish in expectation and the fourth term can be re-written as a boundary integral. 
For the fifth term in the right hand side of \eqref{ito formula for second energy estimate} we use the distributional equality
\[\mathbbm{1}_{M_1<\rho<M_2}\sigma(\rho)\sigma'(\rho)\nabla\rho=\frac{1}{2}\mathbbm{1}_{M_1<\rho<M_2}\nabla\sigma^2(\rho)
=\frac{1}{2}\nabla\left(\sigma^2((\rho\wedge M_2)\vee M_1)-\sigma^2(M_1)\right).
\]
Then using integration by parts gives
\begin{align*}
\frac{1}{2}\int_U\int_0^T S''(\rho)\sigma(\rho)\sigma'(\rho)\nabla\rho\cdot F_2&=-\frac{1}{4}\int_U\int_0^T\left(\sigma^2((\rho\wedge M_2)\vee M_1)-\sigma^2(M_1)\right)\nabla\cdot F_2\\
&+\frac{1}{4}\int_{\partial U}\int_0^T\left(\sigma^2((\Phi^{-1}(\bar{f})\wedge M_2)\vee M_1)-\sigma^2(M_1)\right) F_2\cdot\hat{\eta}.
\end{align*}
This implies by the boundedness of $\nabla\cdot F_2$, $F_3$ and $F_2\cdot\hat{\eta}$ that
\begin{align*}
    \frac{1}{2}\int_U\int_0^T S''(\rho)\left(\sigma(\rho)\sigma'(\rho)\nabla\rho\cdot F_2+ F_3\sigma^2(\rho)\right)&\leq c\int_U\int_0^T\mathbbm{1}_{\rho\geq M_1}\sigma^2(\rho\wedge M_2)\\
    &+c\int_{\partial U}\int_0^T\left(\sigma^2((\Phi^{-1}(\bar{f})\wedge M_2)\vee M_1)-\sigma^2(M_1)\right).
\end{align*}
Again we mention that the terms in \eqref{ito formula for second energy estimate} involving $\nabla h_M$ would vanish if our boundary condition was constant. 
Otherwise they are dealt with in a similar way to the first estimate, except that we use H\"older's inequality everywhere rather than Young's inequality.
In this way we keep the $M$ dependence in these terms through $h_M$.
Explicitly, we have for the terms that involve another gradient using points one and three of Remark \ref{remark choice of pdes},
   \begin{align*}
          \int_U\int_0^T &\nabla h_M \cdot\left(\nabla\Phi(\rho)+\alpha\nabla\rho+\frac{1}{2}F_1(\sigma'(\rho))^2\nabla\rho\right)=T\int_{\partial U}\frac{\partial h_M}{\partial \hat{\eta}}\left(\bar{f}+\alpha\Phi^{-1}(\bar{f})+\Psi_\sigma(\Phi^{-1}(\bar{f}))\right)\\
          &\leq cT\left\|S_M'(\Phi^{-1}(\bar{f}))\right\|_{H^1(\partial U)}\left(\left\|\bar{f}\right\|_{L^2(\partial U)}+\alpha\left\|\Phi^{-1}(\bar{f})\right\|_{L^2(\partial U)}+\left\|\Psi_\sigma(\Phi^{-1}(\bar{f}))\right\|_{L^2(\partial U)}\right).
   \end{align*}
For the remaining two terms, the bound in point eight of Assumption \ref{existence assumptions} alongside H\"older's inequality and point two and three of Remark \ref{remark choice of pdes} gives
\begin{align*}
     \int_U\int_0^T \nabla h_M \cdot&\left(-\nu(\rho)+\sigma(\rho)\sigma'(\rho)F_2\right)\leq c\|\nabla h_M\|_{L^2(U)}\int_U\int_0^T (1+\rho+\Phi(\rho))\\
     &\leq c\left\|S_M'(\Phi^{-1}(\bar{f}))\right\|_{H^1(\partial U)}\int_U\int_0^T (1+\rho+\Phi(\rho)).
\end{align*}
The final two terms can be dealt with using Proposition \ref{proposition estimating powers of l1 norm of solution}, just as in the first estimate. 
Here we \textcolor{red}{cannot} absorb the gradient term that appears, and it stays on the right hand side of the estimate, and it is bounded as a consequence of the first estimate.\\
To complete the estimate we move the first term on the left hand involving $\Psi_{S_M}$ in \eqref{ito formula for second energy estimate} to the right hand side.
To handle it, the definition of $\Psi_{S_M}$ implies that $\Psi_{S_M}(x,\rho)=S_M(\rho)-h(x)\rho$. 
Furthermore the product $h_M\rho_0$ and $\int_U S_M(\rho(x,T))$ are non-negative so can be removed from the estimate.
Using the bound $S_M(\rho_0(x))\leq (\rho_0(x)-M_1)_+$ and Holder's inequality to bound the boundary terms and integral of  $h\rho_T$, noting that the $L^2(U)$ norm of $\rho_T$ is bounded using the first estimate, completes the estimate.
\end{proof}

\begin{remark}[Distinguishing constant and non-constant boundary conditions]\label{remark distinguishing constant and non constant boundary conditions}
    The analysis in the energy estimates above (as well as all subsequent estimates) is far simpler in the case that the boundary condition is non-negative constant, say $\bar{f}=a \geq 0$. 
    In this case the PDEs $g$ and $h_M$ as defined in Definition \ref{definition of functions g and h_M} are solved by constant functions, for instance 
    \[g(x)=\Phi^{-1}(a),\hspace{20pt} x\in\bar{U}.\] 
    Hence the gradient $\nabla g$ as well as the normal derivative $\frac{\partial g}{\partial \hat{\eta}}$ are zero, and so terms involving either of these factors vanish immediately.\\
   \textcolor{red}{ For example, this means that the first energy estimate \eqref{first energy estimate of regularised solution} reads
   \begin{align*}\label{first energy estimate of regularised solution}
    \frac{1}{2}\sup_{t\in[0,T]}&\mathbb{E}\left(\int_U(\rho(x,t)-g(x))^2\right) + \mathbb{E}\left(\int_U\int_0^T|\nabla\Theta_{\Phi}(\rho)|^2\right) +\mathbb{E}\left(\alpha\int_U\int_0^T|\nabla \rho|^2\right) \nonumber\\
    &\leq c\int_0^T\int_U\left(1+\rho+\Phi(\rho)\right)+cT\int_{\partial U}\sigma^2(\Phi^{-1}(a))\\
    &\leq cT\left(1+\Theta_\Phi(\Phi^{-1}(a))+\sigma^2(\Phi^{-1}(a))\right),
\end{align*}
where the above constant is independent of the boundary data $a$.}
\end{remark}

Our goal is to use the above energy estimates to prove fractional Sobolev spatial regularity of the solution. 
We will need the below lemma that will allow us to prove regularity of (fractional) spatial derivatives of the solution. \textcolor{red}{The proof follows \textcolor{red}{\cite[Lem.~5.11]{fehrman2024well}} where we need to use the extension operator theorem \textcolor{red}{\cite[Ch.~5.4]{evans2022partial}} and the fact that $\Theta_\Phi\in H^1(U)$.}

\begin{lemma}\label{lemma on higher order spatial regularity of regularised solution}
    Let $\Phi$ satisfy Assumption \ref{existence assumptions}. Let $z\in H^1(U)$ be non-negative. If $\Phi$ satisfies \eqref{1 of 2 point 4 existence assumption} (allows us to handle $0<m<1$) then
    \[\|\nabla z\|_{L^1(U;\mathbb{R}^d)}\leq \|z\|^\theta_{L^1(U)}\|\nabla\Theta_\Phi\textcolor{red}{(z)}\|_{L^2(U;\mathbb{R}^d)}.\]
    If $\Phi$ satisfies \eqref{2 of 2 point 4 existence assumption} (allows us to handle $m\geq1$) then for every $\beta\in(0,1\wedge 2/q)$, for $c\in(0,\infty)$ depending on $\beta$,
    \[\|z\|_{W^{\beta,1}(U)}\leq c\left(\|z\|_{L^1(U)}+\|\nabla\Theta_\Phi\textcolor{red}{(z)}\|^{\frac{2}{q}}_{L^2(U;\mathbb{R}^d)}\right).\]
\end{lemma}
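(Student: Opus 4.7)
The plan is to transfer the $H^1(U)$ control on $\Theta_\Phi(z)$ to $z$ using the chain rule $\nabla z = \Phi'(z)^{-1/2}\nabla\Theta_\Phi(z)$ (valid almost everywhere, since $\nabla z$ vanishes a.e.\ on $\{z=0\}$ by Stampacchia) together with the appropriate alternative in the fourth point of Assumption \ref{existence assumptions}.

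For the first case, applying the pointwise bound $\Phi'(z)^{-1/2} \le c z^\theta$ and then the Cauchy--Schwarz inequality gives
\[\int_U |\nabla z|\,dx \;\le\; c\Big(\int_U z^{2\theta}\Big)^{1/2}\|\nabla\Theta_\Phi(z)\|_{L^2(U)}.\]
Since $2\theta \le 1$ and $U$ is bounded, Jensen's inequality applied to the concave function $t\mapsto t^{2\theta}$ yields $\int_U z^{2\theta} \le c\|z\|_{L^1(U)}^{2\theta}$, producing the exponent $\theta$ on $\|z\|_{L^1(U)}$ in the claim.

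For the second case, the pointwise inequality $|z(x) - z(y)| \le c|\Theta_\Phi(z(x)) - \Theta_\Phi(z(y))|^{2/q}$ coming from \eqref{2 of 2 point 4 existence assumption} reduces matters to controlling a fractional seminorm of $\Theta_\Phi(z)$. Since $U$ is $C^2$, I would first extend $\Theta_\Phi(z) \in H^1(U)$ to a compactly supported function in $H^1(\mathbb{R}^d)$ via \cite[Ch.~5.4]{evans2022partial}, and appeal to the fractional Sobolev embedding $H^1 \hookrightarrow W^{s,2}$, valid for every $s \in (0,1)$. Then I would estimate
\[\int_U\!\!\int_U \frac{|z(x)-z(y)|}{|x-y|^{d+\beta}}\,dx\,dy \;\le\; c\int_U\!\!\int_U \frac{|\Theta_\Phi(z(x))-\Theta_\Phi(z(y))|^{2/q}}{|x-y|^{d+\beta}}\,dx\,dy\]
and apply H\"older's inequality with exponents $q$ and $q/(q-1)$, distributing the singular weight $|x-y|^{-(d+\beta)}$ as $|x-y|^{-(d+2s)/q}\cdot |x-y|^{-(d+\beta-(d+2s)/q)}$ for a suitable $s \in (0,1)$. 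The first factor yields the $2/q$-power of the $W^{s,2}$-Gagliardo seminorm of $\Theta_\Phi(z)$, controlled by $\|\nabla\Theta_\Phi(z)\|_{L^2(U)}^{2/q}$; the remainder integral $\int_{U\times U}|x-y|^{-r}\,dx\,dy$ is finite whenever $r < d$, a condition that reduces to $s > q\beta/2$ and is permitted exactly because $\beta < 2/q$.

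The main obstacle is the exponent bookkeeping in the second case: a single value of $s \in (0,1)$ must be compatible simultaneously with the embedding $H^1 \hookrightarrow W^{s,2}$ and with integrability of the remainder singularity, while the H\"older split is dictated by the $2/q$ exponent arising from \eqref{2 of 2 point 4 existence assumption}. The strict inequality $\beta < 2/q$ is precisely what reconciles these three constraints. The first case is comparatively direct once one observes that $2\theta \le 1$ permits the use of Jensen's inequality to reduce $L^{2\theta}$ to $L^1$.
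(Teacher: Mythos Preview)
Your proposal is correct and follows exactly the approach the paper points to: the paper does not give a proof here but refers to \cite[Lem.~5.11]{fehrman2024well}, the extension theorem \cite[Ch.~5.4]{evans2022partial}, and the fact that $\Theta_\Phi(z)\in H^1(U)$, which are precisely the ingredients you use. One very minor remark: the H\"older split degenerates when $q=1$, but in that case no H\"older is needed since the Gagliardo integral is already a $W^{\beta/2,2}$ seminorm of $\Theta_\Phi(z)$; also, after extension the $W^{s,2}$ seminorm is naturally controlled by the full $H^1(U)$ norm of $\Theta_\Phi(z)$ rather than $\|\nabla\Theta_\Phi(z)\|_{L^2(U)}$ alone, which is harmless for how the lemma is subsequently applied.
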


We have the following regularity estimate.

\begin{corollary}[Regularity of solution to regularised equation]
Let $\xi^F, \Phi,\sigma$ and $\nu$ satisfy Assumptions \ref{Assumption on noise Fi}, \ref{existence assumptions} and \ref{assumption about smoothing sigma in existence}, and fix the regularisation $\alpha\in(0,1)$ and terminal time $T\in[1,\infty)$.
    Let further $\rho_0\in L^2(\Omega;L^2(U))$ be non-negative and $\mathcal{F}_0$ measurable, $\Theta_{\nu,i}$ and $\Psi_\sigma$ be defined as in Assumption \ref{existence assumptions} and $g$ be the PDE defined in Definition \ref{definition of functions g and h_M} satisfying Assumption \ref{assumption on g and h_M}. 
    Let $\rho$ be a weak solution of \eqref{Regularised equation} in the sense of Definition \ref{definition of weak Solution of regularised equation}. 
    \begin{itemize}
        \item If $\Phi$ satisfies \eqref{1 of 2 point 4 existence assumption}, then for $c\in(0,\infty)$ independent of $\alpha$ and $T$, we have
        \begin{align*}
            \mathbb{E}&\left(\|\rho\|_{L^1([0,T];W^{1,1}(U))}\right)\\
            &\leq c\|\rho_0-g\|^2_{L^2(U)}+ cT\left(1+\|\Theta_\Phi(g)\|_{H^1(U)}+\sum_{i=1}^d\int_
    {\partial U}\Theta_{\nu,i}(\Phi^{-1}(\bar{f}))\cdot\hat{\eta}_i\right)\\
    &+cT\left(\|\sigma^2(\Phi^{-1}(\bar{f}))\|_{L^1(\partial U)}+\|\bar{f}\|^2_{L^2(\partial U)}+\|\Psi_\sigma(\Phi^{-1}(\bar{f}))\|^2_{L^2(\partial U)}+(1+\alpha)\|\Phi^{-1}(\bar{f})\|^2_{H^1(\partial U)}\right).
        \end{align*}
        \item If $\Phi$ satisfies \eqref{2 of 2 point 4 existence assumption}, then for all $\beta\in(0,2/q\wedge1)$ there is a constant $c\in(0,\infty)$ depending on $\beta$, but independent of $\alpha$ and  $T$, such that 
        \begin{align*}
            \mathbb{E}&\left(\|\rho\|_{L^1([0,T];W^{\beta,1}(U))}\right)\\
            &\leq c\|\rho_0-g\|^2_{L^2(U)}+ cT\left(1+\|\Theta_\Phi(g)\|_{H^1(U)}+\sum_{i=1}^d\int_
    {\partial U}\Theta_{\nu,i}(\Phi^{-1}(\bar{f}))\cdot\hat{\eta}_i\right)\\
    &+cT\left(\|\sigma^2(\Phi^{-1}(\bar{f}))\|_{L^1(\partial U)}+\|\bar{f}\|^2_{L^2(\partial U)}+\|\Psi_\sigma(\Phi^{-1}(\bar{f}))\|^2_{L^2(\partial U)}+(1+\alpha)\|\Phi^{-1}(\bar{f})\|^2_{H^1(\partial U)}\right).
        \end{align*}
    \end{itemize}
\end{corollary}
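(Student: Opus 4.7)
The plan is to apply Lemma \ref{lemma on higher order spatial regularity of regularised solution} pointwise in $(\omega,t)$ with $z=\rho(\cdot,t)$, integrate over $[0,T]$, take expectation, and reduce the resulting right-hand sides to quantities already controlled by the energy estimate \eqref{first energy estimate of regularised solution} via H\"older and Young inequalities. In particular, since the right-hand sides of the two claimed bounds match that of \eqref{first energy estimate of regularised solution} (up to universal constants and using $T\ge 1$), it suffices to exhibit a direct estimate controlling $\|\rho\|_{L^1_tW^{1,1}}$ or $\|\rho\|_{L^1_tW^{\beta,1}}$ by $\mathbb{E}\int_0^T\|\rho\|_{L^1(U)}\,dt$ and $\mathbb{E}\int_0^T\|\nabla\Theta_\Phi(\rho)\|_{L^2(U)}^2\,dt$.

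For the first case, under \eqref{1 of 2 point 4 existence assumption} the lemma gives, for almost every $(\omega,t)$,
\[
\|\nabla\rho(\cdot,t)\|_{L^1(U;\mathbb{R}^d)} \le \|\rho(\cdot,t)\|_{L^1(U)}^{\theta}\,\|\nabla\Theta_\Phi(\rho(\cdot,t))\|_{L^2(U;\mathbb{R}^d)}.
\]
Integrating in $t$ and applying Cauchy--Schwarz on $(\omega,t)$ yields
\[
\mathbb{E}\int_0^T\|\nabla\rho\|_{L^1(U;\mathbb{R}^d)}\,dt \le \Big(\mathbb{E}\int_0^T\|\rho\|_{L^1(U)}^{2\theta}\,dt\Big)^{1/2}\Big(\mathbb{E}\int_0^T\|\nabla\Theta_\Phi(\rho)\|_{L^2(U;\mathbb{R}^d)}^2\,dt\Big)^{1/2}.
\]
Since $\theta\in[0,1/2]$ gives $2\theta\le 1$, the inequality $x^{2\theta}\le 1+x$ reduces the first factor to $T+\mathbb{E}\int_0^T\|\rho\|_{L^1(U)}\,dt$, which is bounded via Cauchy--Schwarz in $x$ by a multiple of $\sup_{t\le T}\mathbb{E}\|\rho-g\|^2_{L^2(U)}+\|g\|_{L^2(U)}^2$ and hence by \eqref{first energy estimate of regularised solution}. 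The second factor is precisely the dissipation term appearing in \eqref{first energy estimate of regularised solution}. Converting the product to a sum by Young's inequality and adding $\mathbb{E}\int_0^T\|\rho\|_{L^1(U)}\,dt$ to complete the full $W^{1,1}$ norm gives the first bound.

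For the second case, under \eqref{2 of 2 point 4 existence assumption} the lemma gives pointwise
\[
\|\rho(\cdot,t)\|_{W^{\beta,1}(U)} \le c\Big(\|\rho(\cdot,t)\|_{L^1(U)} + \|\nabla\Theta_\Phi(\rho(\cdot,t))\|_{L^2(U;\mathbb{R}^d)}^{2/q}\Big).
\]
Since $q\ge 1$ gives $2/q\le 2$, Young's inequality yields $\|\nabla\Theta_\Phi(\rho)\|_{L^2}^{2/q}\le 1+\|\nabla\Theta_\Phi(\rho)\|_{L^2}^2$. Integrating in $t$, taking expectation, and invoking \eqref{first energy estimate of regularised solution} for both the $L^1$-norm of $\rho$ (via Cauchy--Schwarz in $x$) and the dissipation term directly yields the second bound.

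The argument is essentially bookkeeping and no substantive obstacle is expected: every term on the right-hand side of the two claims already appears in \eqref{first energy estimate of regularised solution}, and the constants produced by Cauchy--Schwarz and Young are absorbed into the universal constant $c$. The only minor care needed is in the first case, where the Cauchy--Schwarz step produces a factor $\|\rho\|_{L^1}^{2\theta}$ with $2\theta\le 1$; this is tamed by the concavity bound $x^{2\theta}\le 1+x$ before taking expectation, so as not to introduce uncontrolled higher-moment norms of $\rho$.
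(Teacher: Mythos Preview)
Your proposal is correct and follows essentially the same strategy as the paper: apply Lemma~\ref{lemma on higher order spatial regularity of regularised solution} pointwise, use $\theta\in[0,1/2]$ (resp.\ $q\ge 1$) together with Young's inequality, and close with the energy estimate~\eqref{first energy estimate of regularised solution}.

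The one substantive difference is how you bound $\mathbb{E}\int_0^T\|\rho\|_{L^1(U)}\,dt$. The paper invokes Proposition~\ref{proposition estimating powers of l1 norm of solution} with $\epsilon=1$, which produces terms $T$, $T\|\Theta_\Phi(g)\|_{H^1(U)}$, and $\int_0^T\int_U|\nabla\Theta_\Phi(\rho)|^2$---all of which already sit on the right-hand side of the claimed bound. You instead pass through $\|\rho\|_{L^2(U)}$ by Cauchy--Schwarz in $x$, which leaves you with an extra term $T\|g\|_{L^2(U)}^2$. This term does \emph{not} appear explicitly in~\eqref{first energy estimate of regularised solution}, so your sentence ``and hence by~\eqref{first energy estimate of regularised solution}'' skips a step: you still need to bound $\|g\|_{L^2(U)}^2$ by $c\|\Phi^{-1}(\bar f)\|_{H^1(\partial U)}^2$ via standard elliptic regularity for the harmonic extension. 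That is routine, but it should be said. The paper's use of Proposition~\ref{proposition estimating powers of l1 norm of solution} sidesteps this entirely, which is why that proposition was set up in the first place.
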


\begin{proof}
For both estimates we can use Proposition \ref{proposition estimating powers of l1 norm of solution} to bound the $L^1_tL^1_x$ norms with $\epsilon=1$.
 The first estimate then follows from the first item in Lemma \ref{lemma on higher order spatial regularity of regularised solution}, Young's inequality, the fact that $\theta\in[0,1/2]$ and Proposition \ref{energy estimate of regularised equation proposition}.\\
The second estimate follows from the second point in Lemma \ref{lemma on higher order spatial regularity of regularised solution}, the fact that $q\in[1,\infty)$ and Proposition \ref{energy estimate of regularised equation proposition}.
\end{proof}

The final result we want to show is a higher order space-time regularity of solutions cut away from their zero set.
The result will be useful when proving the existence of a weak solution of equation \eqref{Regularised equation} with smooth and bounded $\sigma$, and will subsequently motivate the introduction of a new metric on $L^1_xL^1_t$, see Definition \ref{definition of new metric on L1L1}.

\begin{definition}[Cutoff away from zero] \label{ definition of cutoff away from 0}
    For $\beta\in(0,1)$ let $\phi_{\beta}$ be the piecewise linear cutoff in Definition \ref{definition of convolution kernels and cutoff functions}. 
    Let $\Tilde{\phi}_\beta\in C^\infty([0,\infty))$ be a smooth approximation of $\phi_\beta$. 
    That is to say $0\leq\Tilde{\phi}_\beta\leq1$ is non-decreasing and satisfies $\Tilde{\phi}_\beta(\xi)=1$ for $\xi\geq\beta$, $\Tilde{\phi}_\beta(\xi)=0$ for $\xi\leq\beta/2$ with $|\Tilde{\phi}'_\beta(\xi)|\leq c/\beta$ for constant $c\in(0,\infty)$ independent of $\beta$.\\
    For each $\delta\in(0,1)$ define $\Phi_\beta\in C^{\infty}([0,\infty))$ by
    \[\Phi_\beta(\xi)=\Tilde{\phi}_\beta(\xi)\xi.\]
\end{definition}

\begin{proposition}\label{proposition higher order time regularity of solution cut away from zero set}
    Let $\xi^F, \Phi,\sigma$ and $\nu$ satisfy Assumptions \ref{Assumption on noise Fi}, \ref{existence assumptions} and \ref{assumption about smoothing sigma in existence}, and let $\alpha\in(0,1)$.
    Let further $\rho_0\in L^2(\Omega;L^2(U))$ be non-negative and $\mathcal{F}_0$ measurable, $\Theta_{\nu,i}$ and $\Psi_\sigma$ be defined as in Assumption \ref{existence assumptions} and $g$ be the PDE defined in Definition \ref{definition of functions g and h_M} satisfying Assumption \ref{assumption on g and h_M}.
        Let $\rho$ be a weak solution of the regularised equation \eqref{Regularised equation} in the sense of Definition \ref{definition of weak Solution of regularised equation}.\\
For every $\delta\in(0,1/2)$ and $s>\frac{d}{2}+1$ there is a constant $c\in(0,\infty)$ depending on $\beta,\delta$ and $s$ but independent of $\alpha$ and $T$ such that
\begin{align*}
    \mathbb{E}&\left(\|\Phi_\beta(\rho)\|_{W^{\delta,1}([0,T];H^{-s}(U))}\right)\leq cT \|\rho_0-g\|^2_{L^2(U)}\\
    &+ cT^2\left(1+\|\Theta_\Phi(g)\|_{H^1(U)}+\sum_{i=1}^d\int_
    {\partial U}\Theta_{\nu,i}(\Phi^{-1}(\bar{f}))\cdot\hat{\eta}_i+ \|\sigma^2(\Phi^{-1}(\bar{f}))\|_{L^1(\partial U)}\right)\nonumber\\
    &+cT^2\left(\|\bar{f}\|^2_{L^2(\partial U)}+\|\Psi_\sigma(\Phi^{-1}(\bar{f}))\|^2_{L^2(\partial U)}+(1+\alpha)\|\Phi^{-1}(\bar{f})\|^2_{H^1(\partial U)}\right).
\end{align*}
\end{proposition}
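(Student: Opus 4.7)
The plan is to apply It\^o's formula to $\Phi_\beta(\rho)$, test against an arbitrary $\psi\in H^s(U)$, and split the resulting semimartingale decomposition into a bounded-variation part (yielding $W^{1,1}$ time regularity) and a martingale part (yielding H\"older-$1/2$ time regularity), both taking values in $H^{-s}(U)$. Because $s>d/2+1$, the Sobolev embedding $H^s(U)\hookrightarrow W^{1,\infty}(U)$ is continuous, with a constant depending only on $s$ and $U$, so any integration by parts below that moves at most one derivative onto $\psi$ is absorbed by $\|\psi\|_{H^s}$. The choice of $\Phi_\beta$ is crucial: since $\tilde\phi_\beta$ vanishes on $\{\xi\le\beta/2\}$, both $\Phi'_\beta(\rho)$ and $\Phi''_\beta(\rho)$ are supported in $\{\rho\ge\beta/2\}$, so every singular factor involving $\sigma'(\rho)$ is integrable thanks to point 6 of Assumption \ref{existence assumptions}.

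Starting from \eqref{Regularised equation}, It\^o's formula applied to $\Phi_\beta(\rho)$ gives, for $0\le t_1<t_2\le T$ and $\psi\in H^s(U)$,
\begin{align*}
\langle\Phi_\beta(\rho(\cdot,t_2))-\Phi_\beta(\rho(\cdot,t_1)),\psi\rangle = A_{t_1,t_2}(\psi)+M_{t_1,t_2}(\psi),
\end{align*}
where $A$ collects all Lebesgue-in-time terms coming from $\Phi'_\beta(\rho)\,d\rho$ and the It\^o correction $\tfrac12\Phi''_\beta(\rho)\,d\langle\rho\rangle$, and $M$ is the stochastic integral against $\xi^F$. After integration by parts, each summand of $A$ has the form $\int_{t_1}^{t_2}\!\!\int_U G(s,x)\cdot D\psi\,dx\,ds$ where $G$ is a product of the cutoff derivatives with one of $\nabla\Phi(\rho),\alpha\nabla\rho,\nu(\rho),F_1[\sigma'(\rho)]^2\nabla\rho,\sigma'(\rho)\sigma(\rho)F_2$, or a quadratic-variation density. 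Using Cauchy--Schwarz in time, the energy estimate of Proposition \ref{energy estimate of regularised equation proposition} to bound $\|\nabla\Theta_\Phi(\rho)\|_{L^2}$ and $\alpha^{1/2}\|\nabla\rho\|_{L^2}$, and Proposition \ref{proposition estimating powers of l1 norm of solution} for the $L^1$ control of lower-order quantities like $\Phi(\rho)$, $\sigma^2(\rho)$, $\nu(\rho)$, each term is bounded by $c(t_2-t_1)^{1/2}\|\psi\|_{H^s}\,\mathcal{E}(T)^{1/2}$ with $\mathcal{E}(T)$ equal to the right-hand side of the claim divided by $T$. This actually gives Lipschitz regularity pointwise in $t$ once one uses Cauchy--Schwarz only on the $\alpha^{1/2}\nabla\rho$ and $\nabla\Theta_\Phi(\rho)$ factors and $L^\infty$-in-time bounds for the others, so $A\in W^{1,1}([0,T];H^{-s}(U))\hookrightarrow W^{\delta,1}$ for every $\delta<1$.

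For the martingale part, Burkholder--Davis--Gundy, see \cite[Thm.~4.1]{revuz2013continuous}, applied to the $H^{-s}$-valued stochastic integral yields
\begin{align*}
\mathbb{E}|M_{t_1,t_2}(\psi)|\le c\,\mathbb{E}\left(\int_{t_1}^{t_2}\sum_k\Bigl|\int_U\Phi'_\beta(\rho)\sigma(\rho)(f_k\nabla\psi+\psi\nabla f_k)\,dx\Bigr|^2 ds\right)^{1/2}.
\end{align*}
Expanding the sum using Assumption \ref{Assumption on noise Fi} and bounding $\sigma^2(\rho)\le c(1+\rho+\Phi(\rho))$ by Assumption \ref{existence assumptions}(5), the right-hand side is controlled by $c(t_2-t_1)^{1/2}\|\psi\|_{H^s}\,\mathcal{E}(T)^{1/2}$. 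Hence $M$ belongs almost surely to $C^{1/2-}([0,T];H^{-s}(U))$ in expectation, and its Gagliardo seminorm satisfies
\begin{align*}
\mathbb{E}|M|_{W^{\delta,1}([0,T];H^{-s})}\le c\int_0^T\!\!\int_0^T\frac{\mathbb{E}\|M(t)-M(s)\|_{H^{-s}}}{|t-s|^{1+\delta}}\,dt\,ds
\end{align*}
which converges for every $\delta<1/2$ and produces the claimed bound.

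The main obstacle is maintaining uniformity in $\alpha\in(0,1)$: the drift term $\alpha\Phi'_\beta(\rho)\nabla\rho$ leaves one gradient after integration by parts, and Cauchy--Schwarz in time together with the energy estimate produces a factor $\alpha^{1/2}T^{1/2}$ which is harmless for $\alpha<1$. The secondary difficulty is ensuring the quadratic-variation term $\tfrac12 F_3\Phi''_\beta(\rho)\sigma^2(\rho)$ is handled by $\Phi''_\beta$ being bounded (with $\beta$-dependent constant absorbed into $c$) and $\sigma^2(\rho)\in L^1([0,T]\times U)$ via Proposition \ref{proposition estimating powers of l1 norm of solution}. Combining the $W^{1,1}$ bound for $A$ with the H\"older-$1/2$ bound for $M$ gives the announced $W^{\delta,1}([0,T];H^{-s}(U))$ estimate for every $\delta\in(0,1/2)$.
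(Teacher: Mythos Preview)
Your approach is essentially the same as the paper's: apply It\^o's formula to $\Phi_\beta(\rho)$, decompose into a finite-variation part and a martingale part, show the former lies in $W^{1,1}([0,T];H^{-s}(U))$ and the latter has $|t-s|^{1/2}$-type increment bounds via BDG, and conclude by embedding into $W^{\delta,1}$ for $\delta<1/2$. The only cosmetic difference is that the paper places the martingale in $W^{\delta,2}([0,T];H^{-s}(U))$ (following \cite[Lem.~2.1]{flandoli1995martingale}) and then uses $W^{\delta,2}\hookrightarrow W^{\delta,1}$, whereas you integrate the BDG increment bound directly against the Gagliardo kernel.

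One point to tighten: your displayed quadratic variation for $M$ is missing a term. When you integrate by parts to remove the divergence from the noise, the gradient also hits $\Phi'_\beta(\rho)$, producing a contribution $\int_U \Phi''_\beta(\rho)\sigma(\rho)\psi\,\nabla\rho\cdot f_k\,dB^k$ (equivalently, the paper's second summand $\int_0^t\Phi''_\beta(\rho)\sigma(\rho)(\Phi'(\rho))^{-1/2}\nabla\Theta_\Phi(\rho)\cdot d\xi^F$). This term is harmless because $\Phi''_\beta$ is supported on $[\beta/2,\beta]$ and $\nabla\Theta_\Phi(\rho)\in L^2$, but it should appear in your BDG bound. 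A related technicality: to pass from the pointwise-in-$\psi$ estimate $\mathbb{E}|M_{t_1,t_2}(\psi)|\le c|t_2-t_1|^{1/2}\|\psi\|_{H^s}$ to a bound on $\mathbb{E}\|M_{t_2}-M_{t_1}\|_{H^{-s}}$ you need to swap expectation and supremum, which is exactly where the Hilbert-space BDG argument of \cite{flandoli1995martingale} (summing over an orthonormal basis of $H^{-s}$ and controlling second moments) is used in the paper.
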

The proof is analogous to \textcolor{red}{\cite[Ppn.~5.14]{fehrman2024well}}, we just give the main idea below.

\begin{proof}
    Similarly to the derivation of the kinetic equation, it follows by It\^o's formula and then bringing the $\Phi_\beta'(\rho)$ inside the derivative and using the product rule, that
    \begin{align*}
        d\Phi_\beta(\rho)&=\Phi'_\beta(\rho)d\rho+\frac{1}{2}\Phi''_\beta(\rho)d\langle\rho\rangle\\        &=\nabla\cdot\left(\Phi'_\beta(\rho)\nabla\Phi(\rho)+\alpha\Phi'_\beta(\rho)\nabla\rho-\Phi'_\beta(\rho)\sigma(\rho)d\xi^F-\Phi'_\beta(\rho)\nu(\rho)\right)\\
        &+\nabla\cdot\left(\frac{1}{2}F_1\Phi'_\beta(\rho)(\sigma'(\rho))^2\nabla\rho+\frac{1}{2}\Phi'_\beta(\rho)\sigma(\rho)\sigma'(\rho)F_2\right)\\
        &-\Phi''_\beta(\rho)\nabla\rho\cdot\nabla\Phi(\rho)-\alpha\Phi''_\beta(\rho)|\nabla\rho|^2+ \Phi''_\beta(\rho)\sigma(\rho)\nabla\rho\cdot d\xi^F+ \Phi''_\beta(\rho)\nabla\rho\cdot\nu(\rho)\\
        &+\frac{1}{2}\Phi_\beta''(\rho)\left(\sigma(\rho)\sigma'(\rho) F_2\cdot\nabla\rho+F_3\sigma^2(\rho)\right)
    \end{align*}
Doing so will allow us to compute the the fractional Sobolev norm more easily.
Integrating in time, and writing some derivatives in terms of the function $\Theta_\Phi$, we have for fixed $x\in U$ the decomposition $\Phi_\beta(\rho(x,t))=\Phi_\beta(\rho_0(x))+I_t^{f.v.}+I_t^{mart}$ with
\[I_t^{mart}:=-\int_0^t\nabla\cdot\left(\Phi_\beta'(\rho)\sigma(\rho)d\xi^F\right)+\int_0^t\Phi''_\beta(\rho)\sigma(\rho)(\Phi'(\rho))^{-1/2}\nabla\Theta_\Phi(\rho)\cdot d\xi^F,\]
and 
\begin{align*}
    I_t^{f.v.}&:=\int_0^t\nabla\cdot\left(\Phi'_\beta(\rho)(\Phi'(\rho))^{1/2}\nabla\Theta_\Phi(\rho)\right)-\int_0^t\Phi''_\beta(\rho)|\nabla\Theta_\Phi(\rho)|^2+\alpha\int_0^t\nabla\cdot(\Phi'_\beta(\rho)\nabla\rho)\\
    &-\alpha\int_0^t\Phi''_\beta(\rho)|\nabla\rho|^2+\frac{1}{2}\int_0^t\nabla\cdot\left(F_1\Phi'_\beta(\rho)\frac{(\sigma'(\rho))^2}{(\Phi'(\rho))^{1/2}}\nabla\Theta_\Phi(\rho)\right)+\frac{1}{2}\int_0^t\nabla\cdot\left(\Phi'_\beta(\rho)\sigma(\rho)\sigma'(\rho) F_2\right)\\
    &+\frac{1}{2}\int_0^t\Phi_\beta''(\rho)\frac{\sigma(\rho)\sigma'(\rho)}{\Phi'(\rho)^{1/2}} F_2\cdot\nabla\Theta_\Phi(\rho)+\frac{1}{2}\int_0^t\Phi''_\beta(\rho)F_3\sigma^2(\rho)-\int_0^t\nabla\cdot(\Phi'_\beta(\rho)\nu(\rho))\\
    &-\int_0^t\nabla\cdot(\Phi'_\beta(\rho)\nu(\rho))+\int_0^t\Phi''_\beta(\rho)\nabla\rho\cdot\nu.
\end{align*}
We begin by showing the martingale term is in $W^{\delta,2}([0,T];H^{-s}(U))$, that is
\[\|I^{mart}_t\|^2_{W^{\delta,2}([0,T];H^{-s}(U))}:=\int_0^T\|I^{mart}_t\|^2_{H^{-s}(U)}\,dt + \int_0^T\int_0^T\frac{\|I^{mart}_t-I^{mart}_s\|^2_{H^{-s}(U)}}{|t-s|^{1+2\delta}}\,ds\,dt<\infty.\]
Since $s>\frac{d}{2}+1$ the Sobolev embedding theorem tells us that for test functions $\phi$ that we use in the definition of negative fractional Sobolev norm, there is a constant $c\in(0,\infty)$ such that 
\begin{equation}\label{l infinity estimate of test functions}
    \|\phi\|_{L^\infty(U)}+\|\nabla \phi\|_{L^\infty(U;\mathbb{R}^d)}\leq c\|\phi\|_{H^s(U)}
\end{equation}
Using this and an argument similar to \textcolor{red}{\cite[Lem.~2.1]{flandoli1995martingale}} alongside the Burkholder-Davis-Gundy inequality, the fact that $\Phi_\beta''$ is supported on $[\beta/2,\beta]$ as well as the bounds in Assumption \ref{existence assumptions}, that the second term in the definition of the norm satisfies
\begin{align*}
        &\mathbb{E}\left(\int_0^T\int_0^T\frac{\|I^{mart}_t-I^{mart}_s\|^2_{H^{-s}(U)}}{|t-s|^{1+2\delta}}\,ds\,dt\right)\\
        &\leq c\mathbb{E}\left(\int_0^T\|\Theta_{\Phi}(\rho)\mathbbm{1}_{\beta/2\leq\rho\leq \beta}\|^2_{L^2(U)}+\|\sigma(\rho)\|^2_{L^2(U)}\,ds\right)\\
        &\leq c\mathbb{E}\left(|U|T+ \int_0^T\int_U\left(|\rho_t|+|\nabla\Theta_{\Phi}|^2+|\Phi(\rho)|\right)\right).
\end{align*}
Analogously for the first term in the norm we have the same bound
\[\int_0^T\|I^{mart}_t\|^2_{H^{-s}(U)}\,dt\leq c\mathbb{E}\left(|U|T+ \int_0^T\int_U\left(|\rho_t|+|\nabla\Theta_{\Phi}|^2+|\Phi(\rho)|\right)\right). \]
Putting these together, it follows from Proposition \ref{proposition estimating powers of l1 norm of solution} that
\begin{align*}
    \|I^{mart}_t\|^2_{W^{\delta,2}([0,T];H^{-s}(U))}&\leq c\mathbb{E}\left(T+ T\|\Theta_\Phi(g)\|_{H^1(U)}+\int_0^T\int_U|\nabla\Theta_{\Phi}|^2\right).
\end{align*}
We next show that the finite variation term is  in $W^{1,1}([0,T];H^{-1}(U))$, that is
\[\left\|I^{f.v.}\right\|_{W^{1,1}([0,T];H^{-s}(U))}:=\int_0^T\left\|I^{f.v.}\right\|_{H^{-s}(U)}+\int_0^T\left\|\frac{d}{dt}I^{f.v.}\right\|_{H^{-s}(U)}<\infty.\]
It follows from \eqref{l infinity estimate of test functions}, the fact that $\Phi_\beta'$ and $\Phi_\beta''$ are supported on $[\beta/2,\infty)$ and $[\beta/2,\beta]$ respectively and Young's inequality, that we can bound the first term by
\begin{align*}
    \int_0^T\left\|I^{f.v.}\right\|_{H^{-s}(U)}&\leq c\int_0^T\int_U\int_0^t\left( \Phi'(\rho)\mathbbm{1}_{\rho>\beta/2}+|\nabla\Theta_{\Phi}(\rho)|^2\mathbbm{1}_{\rho>\beta/2}+\alpha|\nabla\rho|\mathbbm{1}_{\rho>\beta/2}\right.\\
    &+\frac{\sigma'(\rho)^4}{\Phi'(\rho)}\mathbbm{1}_{\rho>\beta/2}+|\sigma(\rho)\sigma'(\rho)|\mathbbm{1}_{\rho>\beta/2}+\frac{(\sigma(\rho)\sigma'(\rho))^2}{\Phi'(\rho)}\mathbbm{1}_{\rho>\beta/2}+|\nu(\rho)|\mathbbm{1}_{\rho>\beta/2}\\
    &\left.+(1-\alpha)|\nabla\rho|^2\mathbbm{1}_{\beta/2<\rho<\beta}+\sigma^2\mathbbm{1}_{\beta/2<\rho<\beta}+|\nu|^2\mathbbm{1}_{\beta/2<\rho<\beta}\right)\,ds\,dx\,dt.
\end{align*}
Using the fundamental theorem of calculus, the second finite variation term consists of the same terms but without the inner $ds$ integral.
Using that the inner integral is increasing in $t$, the fact that $\alpha\in(0,1)$ and so $\alpha x\leq 1+\alpha x^2$, the fact that the three terms in the final line are bounded over the indicator set, points three six and eight of Assumption \ref{existence assumptions} to bound the various coefficients and using Proposition \ref{proposition estimating powers of l1 norm of solution} analogously to the martingale term, we have
\begin{align*}
   \mathbb{E}\left\|I^{f.v.}\right\|_{W^{1,1}([0,T];H^{-s}(U))} &\leq c(1+T)\mathbb{E}\left(T+ T\|\Theta_\Phi(g)\|_{H^1(U)}+\int_0^T\int_U\left(|\nabla\Theta_{\Phi}|^2+\alpha|\nabla\rho|^2\right)\right).
\end{align*}
Using the trivial fact that there is a constant $c$ such that $(1+T)<cT$, the estimate then follows by the first energy estimate in Proposition \ref{energy estimate of regularised equation proposition} alongside the continuous embeddings $W^{\beta,2}, W^{1,1}\hookrightarrow W^{\beta,1}$ for every $\beta\in(0,1/2)$.
\end{proof}

\subsection{Entropy estimate}\label{section4.2 entropy estimate}

In this section we prove an entropy estimate for weak solutions of the regularised Dean--Kawasaki equation, following \textcolor{red}{\cite[Ppn.~5.18]{fehrman2024well}}. 
The estimates will not be used in the remainder of the work, but they are provided because of their connection with the study of large deviation principles, see for instance the introduction of \cite{fehrman2023non}.
We will need the below assumptions.
\begin{assumption}[Entropy assumptions]\label{Assumptions for entropy space}
    Let $\Phi,\sigma\in C([0,\infty))$ and $\nu\in C([0,\infty);\mathbb{R}^d)$ satisfy the following assumptions from \textcolor{red}{\cite[Asm.~5.17]{fehrman2024well}}.
    \begin{enumerate}    
        
        \item There exists a constant $c\in(0,\infty)$ such that $|\sigma(\xi)|\leq c\Phi^{1/2}(\xi)$ for every $\xi\in[0,\infty)$.
        \item There exists a constant $c\in(0,\infty)$ such that for every $\xi\in[0,\infty)$
        \begin{equation}\label{assumptions for kinetic measure: bounding nu and phi prime}
            \Phi'(\xi)\leq c(1+\xi+\Phi(\xi)).
        \end{equation}
        \item We have $\nabla\cdot F_2=0$.
       
        \item We have that $\log(\Phi)$ is locally integrable on $[0,\infty)$. 
        
        \vspace{10pt}
        Furthermore, we have the two additional new assumptions, analogous to the new assumptions in Assumption \ref{existence assumptions}.
        \item
        Either $F_2=0$ or the unique function $\Theta_{\Phi,\sigma}$ defined by 
        \[\Theta_{\Phi,\sigma}(1)=0, \hspace{100pt} \Theta'_{\Phi,\sigma}(\xi)=\frac{\Phi'(\xi)\sigma'(\xi)\sigma(\xi)}{\Phi(\xi)}\] 
        satisfies $\Theta_{\Phi,\sigma}(\Phi^{-1}(\bar{f}))\in L^1(\partial U)$.
        \item For $i=1,\hdots,d$, the unique functions $\Theta_{\Phi,\nu,i}$ defined by
        \[\Theta_{\Phi,\nu,i}(0)=0, \hspace{100pt} \Theta'_{\Phi,\nu,i}(\xi)=\frac{\Phi'(\xi)\nu_i(\xi)}{\Phi(\xi)}
        \]
        satisfy $\Theta_{\Phi,\nu,i}(\Phi^{-1}(\bar{f}))\cdot \eta_i \in L^1(\partial U)$. 
    \end{enumerate}
\end{assumption}

We give remarks about the new assumptions.

\begin{remark}\label{remark on new assumptions}
\begin{itemize}\label{remark on additional enrtopy assumptions}
    \item  In the model case the function in point five is given by $\Theta_{\Phi,\sigma}(\xi)=m\log(\xi)$. In the case of zero boundary conditions we therefore require $F_2=0$. 
    \item  We can without loss of generality consider $\nu(0)=0$, and since $\nu$ is Lipschitz in  the model case we have that the function in point six is $\Theta_{\Phi,\nu,i}(\xi)=c\xi$ for some constant $c\in(0,\infty)$ and the assumption is satisfied.
\end{itemize}
\end{remark}

We now introduce the family of PDEs $\{v_\delta\}$.

\begin{definition}\label{definition of v delta}
     For every $\delta\in(0,1)$ let $v_\delta$ satisfy the PDE
\begin{equation}\label{equation for v delta}
    \begin{cases}
       -\Delta v_\delta=0,&\text{in}\hspace{5pt} U,\\
       v_\delta=\log(\bar{f}+\delta),&\text{on}\hspace{5pt} \partial U,\\
    \end{cases}
\end{equation}
where $\bar{f}$ is the boundary condition in \eqref{generalised Dean--Kawasaki Equation Stratonovich}. 
\end{definition}

We will need to bound normal derivatives of $v_\delta$ in the $\delta\to0$ limit, motivating the below assumption.

\begin{assumption}[Assumption on $v_\delta$]\label{assumption on the boundary data}
     Either the boundary data $\bar{f}$ is constant or the solution of the PDE $v_0$ defined by
    \begin{equation*}
    \begin{cases}
       -\Delta v_0=0,&\text{in}\hspace{5pt} U,\\
       v_0=\log(\bar{f}),&\text{on}\hspace{5pt} \partial U,
    \end{cases}
\end{equation*}
satisfies $\log(\bar{f})\in H^1(\partial U)$, where the norm is defined in Theorem \ref{thm: definiton of H^1 boundary norm}.
\end{assumption}

We now present the entropy estimate, the proof follows \textcolor{red}{\cite[Prop.~5.18]{fehrman2024well}}. 
We once again repeat the comment in Remark \ref{remark distinguishing constant and non constant boundary conditions} and emphasise that the below analysis is simplified if the boundary condition is constant.

\begin{proposition}[Entropy estimate]\label{entropy estimate}
    Let $\xi^F,\Phi,\sigma$ and $\nu$ satisfy Assumptions \ref{Assumption on noise Fi}, \ref{existence assumptions}, \ref{assumption about smoothing sigma in existence} and \ref{Assumptions for entropy space}, and suppose Assumption \ref{assumption on the boundary data} concerning $v_0$ is satisfied.
    Let the functions $\Theta_{\Phi,\sigma},\{\Theta_{\Phi,\nu,i}\}_{i=1}^d,\Psi_\sigma$ be defined as in Assumption \ref{Assumptions for entropy space}.
    Let $\alpha\in(0,1)$, $T\in[1,\infty)$ and suppose the weak solution of the regularised equation \eqref{Regularised equation} has $\mathcal{F}_0$-measurable initial condition satisfying $\rho_0\in L^1(\Omega;Ent(U))$.
    For the function $\Psi_{\Phi,0}:U\times[0,\infty)\to\mathbb{R}$ defined by
$\Psi_{\Phi,0}(x,0)=0$ and $\partial_\xi\Psi_{\Phi,0}(x,\xi)=\log(\Phi(\xi))-v_0(x)$,  we have, for a constant $c\in(0,\infty)$ independent of $\alpha$ and $T$, the bound
 \begin{align*}
        \mathbb{E}\int_U\Psi_{\Phi,0}&(x,\rho(x,t))\,dx+ 4\mathbb{E}\int_0^T\int_U|\nabla\Phi^{1/2}(\rho)|^2+\alpha \mathbb{E}\int_0^T\int_U\frac{\Phi'(\rho)}{\Phi(\rho)}|\nabla\rho|^2\\
        &\leq \mathbb{E}\int_U\Psi_{\Phi,0}(x,\rho_0(x))\,dx+cT + cT\|\Theta_\Phi(g)\|_{H^1(U)}+c\mathbb{E}\int_0^T\int_U\left|\nabla\Theta_\Phi(\rho)\right|^2\nonumber\\
     &+cT\int_{\partial U} \Theta_{\Phi,\sigma}(\Phi^{-1}(\bar{f}))+T\sum_{i=1}^d \int_{\partial U} \Theta_{\Phi,\nu,i}(\Phi^{-1}(\bar{f}))\cdot\hat{\eta}_i+T\|\bar{f}\|^2_{L^2(\partial U)}\nonumber\\
      & 
      +\alpha T\|\Phi^{-1}(\bar{f})\|_{L^2(\partial U)}^2
      +T\|\Psi_\sigma(\Phi^{-1}(\bar{f}))\|^2_{L^2(\partial U)}+c(1+\alpha)T\|\log(\Phi^{-1}(\bar{f}))\|^2_{H^1(\partial U)}.
\end{align*}
\end{proposition}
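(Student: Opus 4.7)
The plan is to follow the It\^o's formula computation of \cite[Prop.~5.18]{fehrman2024well}, adapted to the bounded domain by subtracting off a harmonic ``potential'' from the entropy density that kills its boundary trace. Concretely, for $\delta\in(0,1)$ I would regularise the singular logarithm by working with
\[\Psi_{\Phi,\delta}(x,0)=0,\qquad \partial_\xi\Psi_{\Phi,\delta}(x,\xi)=\log(\Phi(\xi)+\delta)-v_\delta(x),\]
with $v_\delta$ the harmonic PDE from Definition \ref{definition of v delta}. The key point of choosing $v_\delta$ in this way is that on $\partial U$ one has $\partial_\xi\Psi_{\Phi,\delta}(x,\rho)=\log(\bar f+\delta)-v_\delta=0$, which is exactly what permits integration by parts of terms of the form $\int(\log(\Phi(\rho)+\delta)-v_\delta)\nabla\cdot(\cdot)\,dx$ without picking up uncontrolled boundary contributions. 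I would then apply It\^o's formula to $t\mapsto\int_U\Psi_{\Phi,\delta}(x,\rho(x,t))\,dx$ and substitute the regularised equation \eqref{Regularised equation}.

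Terms are then handled in parallel with the $L^2(U)$ energy estimate of Proposition \ref{energy estimate of regularised equation proposition}. The leading $\Delta\Phi(\rho)$ integrates by parts to $-\int\Phi'(\rho)|\nabla\Phi(\rho)|^2/(\Phi(\rho)+\delta)$, which converges as $\delta\to0$ to $-4\int|\nabla\Phi^{1/2}(\rho)|^2$, and the $v_\delta$ piece gives a boundary contribution $\int_{\partial U}\bar f\,\partial_{\hat\eta}v_\delta$ because $v_\delta$ is harmonic. The $\alpha\Delta\rho$ term similarly produces $-\alpha\Phi'(\rho)|\nabla\rho|^2/(\Phi(\rho)+\delta)$ plus a boundary term $\alpha\int_{\partial U}\Phi^{-1}(\bar f)\,\partial_{\hat\eta}v_\delta$. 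The conservative flux $\nabla\cdot\nu(\rho)$ reduces, after an IBP and the product rule, to the boundary integrals containing $\Theta_{\Phi,\nu,i}(\Phi^{-1}(\bar f))\cdot\hat\eta_i$; analogously, the $F_2$-drift $\tfrac12\nabla\cdot(\sigma\sigma'F_2)$ collapses to the $\Theta_{\Phi,\sigma}$ boundary term using $\nabla\cdot F_2=0$.

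The essential It\^o cancellation is the following: the Stratonovich-to-It\^o correction $\tfrac12\nabla\cdot(F_1(\sigma'(\rho))^2\nabla\rho)$, tested against $\log(\Phi(\rho)+\delta)-v_\delta$ and integrated by parts, produces $\tfrac12\int\Phi'(\rho)F_1(\sigma'(\rho))^2|\nabla\rho|^2/(\Phi(\rho)+\delta)$, and this cancels exactly with the $F_1$ summand of the quadratic-variation term $\tfrac12\partial_\xi^2\Psi_{\Phi,\delta}(\rho)\,d\langle\rho\rangle_t$. This is the standard Dean--Kawasaki cancellation that tames the square-root $\sigma$. The stochastic integral vanishes in expectation; the residual $\tfrac12 F_3\sigma^2(\rho)\Phi'(\rho)/(\Phi(\rho)+\delta)$ is controlled via $\sigma^2\leq c\Phi$ and $\Phi'\leq c(1+\rho+\Phi)$ from Assumption \ref{Assumptions for entropy space} combined with Proposition \ref{proposition estimating powers of l1 norm of solution}. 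All boundary terms featuring $\partial_{\hat\eta}v_\delta$ are then bounded via Theorem \ref{thm: definiton of H^1 boundary norm} by products of $\|\log(\bar f+\delta)\|_{H^1(\partial U)}$ with $L^2(\partial U)$ norms of the corresponding boundary data, and Young's inequality on the cross terms produces the $\int|\nabla\Theta_\Phi(\rho)|^2$ and $\|\Theta_\Phi(g)\|_{H^1(U)}$ contributions (the former is controlled in closed form by the first estimate of Proposition \ref{energy estimate of regularised equation proposition}).

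The principal obstacle is making the calculation rigorous and passing to the limit $\delta\to 0$, since $\log(\Phi)$ is singular at zero and $\rho$ may touch zero. Local integrability of $\log(\Phi)$ (Assumption \ref{Assumptions for entropy space}(4)), the entropy initial data $\rho_0\in L^1(\Omega;Ent(U))$, and monotone/dominated convergence identify the initial and terminal entropies $\int_U\Psi_{\Phi,0}(x,\rho(x,t))\,dx$ in the limit, while Fatou's lemma upgrades the $\delta\to 0$ inequality on the dissipation side to recover $4\int|\nabla\Phi^{1/2}(\rho)|^2$ and $\alpha\int\Phi'(\rho)|\nabla\rho|^2/\Phi(\rho)$. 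The uniform bound $\|\log(\bar f+\delta)\|_{H^1(\partial U)}\leq c\|\log(\bar f)\|_{H^1(\partial U)}$ supplied by Assumption \ref{assumption on the boundary data} closes the boundary estimates in the limit.
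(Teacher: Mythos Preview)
Your proposal is correct and follows essentially the same approach as the paper: regularise with $\Psi_{\Phi,\delta}$ using the harmonic $v_\delta$, apply It\^o's formula, exploit the $F_1$ cancellation, turn the $\nu$ and $F_2$ contributions into boundary integrals via $\Theta_{\Phi,\nu,i}$ and $\Theta_{\Phi,\sigma}$, handle the $\nabla v_\delta$ terms as in Remark~\ref{remark choice of pdes}, and pass $\delta\to0$. One small slip: the leading term after integrating $\Delta\Phi(\rho)$ by parts is $-\int|\nabla\Phi(\rho)|^2/(\Phi(\rho)+\delta)$, not $-\int\Phi'(\rho)|\nabla\Phi(\rho)|^2/(\Phi(\rho)+\delta)$.
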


\begin{proof}
To obtain the bound, apply It\^o's formula to the regularised function $\Psi_{\Phi,\delta}:U\times[0,\infty)\to\mathbb{R}$ defined by
$\Psi_{\Phi,\delta}(x,0)=0$ and $\partial_\xi\Psi_{\Phi,\delta}(x,\xi)=\log(\Phi(\xi)+\delta)-v_\delta(x)$, 
where $v_\delta$ satisfies the PDE \eqref{equation for v delta} in Definition \ref{definition of v delta} and once again ensures that $\partial_\xi\Psi_{\Phi,\delta}(x,\rho)$ vanishes along the boundary.
We get after integrating the first order term by parts, that
\begin{align}\label{equation applying ito in kinetic measure proposition}
     &   \left.\int_U\Psi_{\Phi,\delta}(x,\rho(x,t))\,dx\right|_{s=0}^T\nonumber\\
     &=\int_0^T\int_U-\frac{|\nabla\Phi(\rho)|^2}{\Phi(\rho)+\delta}-\frac{\alpha\Phi'(\rho)|\nabla\rho|^2}{\Phi(\rho)+\delta}+\frac{\sigma(\rho) \Phi'(\rho)\nabla\rho \cdot d{\xi}^F}{\Phi(\rho)+\delta}+\frac{\Phi'(\rho)\nabla\rho\cdot \nu(\rho)}{\Phi(\rho)+\delta}\nonumber\\
     &\hspace{170pt} + \frac{\Phi'(\rho)\sigma'(\rho)\sigma(\rho)\nabla\rho\cdot F_2}{2(\Phi(\rho)+\delta)}
      +\frac{F_3\Phi'(\rho)\sigma^2(\rho)}{2(\Phi(\rho)+\delta)}\nonumber\\
      &+\int_0^T\int_U\nabla v_\delta\cdot\left(\nabla\Phi(\rho)+\alpha\nabla\rho-\sigma(\rho)d\xi^F -\nu(\rho)+\frac{1}{2}F_1[\sigma'(\rho)]^2\nabla\rho + \frac{1}{2}\sigma'(\rho)\sigma(\rho)F_2\right).
\end{align}
The terms are handled in an analogous way to energy estimates already seen thus far in Propositions \ref{energy estimate of regularised equation proposition} and \ref{proposition about kinetic measure at 0}, so we are brief.
We move the first two terms to the left and side of the estimate, noting that the distributional inequality allows us to rewrite the first term as
$
    \nabla\Phi^{1/2}(\rho)=\frac{\Phi'(\rho)}{2\Phi^{1/2}(\rho)}\nabla\rho
$
\[\int_U\frac{|\nabla\Phi(\rho)|^2}{\Phi(\rho)+\delta}=\int_U\frac{4\Phi(\rho)}{\Phi(\rho)+\delta}|\nabla\Phi^{1/2}(\rho)|^2.\]
After taking expectation the third term is killed as well as the noise term in the second line.
The fourth and fifth terms can be re-written as boundary integrals. 
For the fourth, we use the functions $\Theta_{\Phi,\nu,\delta,i}$ for $i=1,\hdots,d$ defined by $\Theta_{\Phi,\nu,\delta,i}(0)=0, \Theta'_{\Phi,\nu,\delta,i}(\xi)=\frac{\Phi'(\xi)\nu_i(\xi)}{\Phi(\xi)+\delta}$,
and for the fifth we define the unique function $\Theta_{\Phi,\sigma,\delta}$ satisfying $\Theta_{\Phi,\sigma,\delta}(0)=0,\Theta'_{\Phi,\sigma,\delta}(\xi)=\frac{\Phi'(\xi)\sigma'(\xi)\sigma(\xi)}{\Phi(\xi)+\delta}$,
and note that either $F_2=0$, or use integration by parts alongside the assumptions $\nabla\cdot F_2=0$ and the fact that $F_2 \cdot \hat{\eta}$ is bounded.
For the final term in the first line of \eqref{equation applying ito in kinetic measure proposition}, by the assumption $\sigma\leq c \Phi^{1/2}$, the fact that $\frac{x}{x+\delta}<1$ for every $\delta>0$ and the assumption $\Phi'(\xi)\leq c(1+\xi+\Phi(\xi))$, we obtain
\[\frac{1}{2}\int_0^T\int_U  \frac{F_3\Phi'(\rho)\sigma^2(\rho)}{\Phi(\rho)+\delta}\leq c \int_0^T\int_U  \Phi'(\rho)\leq c\left(|U|T + \int_0^T\int_U\left(\rho+ \Phi(\rho)\right)\right). \]
The terms involving $\nabla v_\delta$ in \eqref{equation applying ito in kinetic measure proposition} would all vanish if the boundary condition was constant.
Otherwise they are handled in the way described in points one and two of Remark \ref{remark choice of pdes}.
The first, second and fifth terms in \eqref{equation applying ito in kinetic measure proposition} with other gradient terms are handled using integrate by parts and turn into boundary terms.
As for the terms without derivatives, using the $L^2(U)$ integrability of $\nabla v_\delta$ implied by point two of Remark \ref{remark choice of pdes}, Young's inequality, the boundedness of $F_2$, the equation \eqref{equation for v delta} satisfied by $v_\delta$ and Assumption \ref{existence assumptions}, we have
\begin{align*}
 \int_0^T\int_U\nabla v_\delta\cdot(-\nu(\rho) &+ \frac{1}{2}\sigma'(\rho)\sigma(\rho)F_2) 
\leq cT\|\log(\bar{f}+\delta)\|^2_{H^1(\partial U)}+c\int_0^T\int_U\left(1+\rho+\Phi(\rho)\right)\,dx\,dt.
\end{align*}
Putting everything together we get
\begin{align}\label{long espression in proof of kinetic measure at 0}
        \mathbb{E}\int_U&\Psi_{\Phi,\delta}(x,\rho(x,t))\,dx+ 4\mathbb{E}\int_0^T\int_U\frac{\Phi(\rho)}{\Phi(\rho)+\delta}|\nabla\Phi^{1/2}(\rho)|^2+\alpha \mathbb{E}\int_0^T\int_U\frac{\Phi'(\rho)}{\Phi(\rho)+\delta}|\nabla\rho|^2\nonumber\\
     &\leq \mathbb{E}\int_U\Psi_{\Phi,\delta}(x,\rho_0(x))\,dx+c\left(|U|T +\mathbb{E}\int_0^T\int_U\left(|\rho|+|\Phi(\rho)|\right)\right)\nonumber\\
     & +cT\int_{\partial U} \Theta_{\Phi,\sigma,\delta}(\Phi^{-1}(\bar{f})) +T\sum_{i=1}^d\int_{\partial U} \Theta_{\Phi,\nu,\delta,i}(\Phi^{-1}(\bar{f}))\cdot\hat{\eta}_i\nonumber\\
      & +T\left(\int_{\partial U}\bar{f} \frac{\partial v_\delta}{\partial \hat{\eta}}
      +\alpha \int_{\partial U}\Phi^{-1}(\bar{f}) \frac{\partial v_\delta}{\partial \hat{\eta}}
      +\int_{\partial U}\Psi_\sigma(\Phi^{-1}(\bar{f}))\frac{ \partial v_\delta}{\partial \hat{\eta}}+c\|\log(\bar{f}+\delta)\|^2_{H^1(\partial U)}\right).
\end{align}
Once again we used the fact that the boundary terms in the final two lines are deterministic and do not depend on time.
To obtain the desired estimate, we wish to take the $\delta\to 0$ limit in equation \eqref{long espression in proof of kinetic measure at 0}, and therefore we need a handle over the boundary terms which all depend on $\delta$.
Alongside Young's inequality, the new assumptions in Assumption \ref{Assumptions for entropy space} and Assumption \ref{existence assumptions} precisely allow us to do this.
Finally, to bound the integral of $\rho$ and $\Phi(\rho)$ in the second line, we use Proposition \ref{proposition estimating powers of l1 norm of solution} in the same manner as the energy estimates, noting that we \textcolor{red}{cannot} absorb the term involving $\left|\nabla\Theta_\Phi(\rho)\right|^2$ into the left hand side, but the term is bounded by the first energy estimate \eqref{first energy estimate of regularised solution}. 
The estimate is proven.
\end{proof}

\begin{remark}[Comparing entropy estimate with proof of kinetic measure at zero]\label{Comparing entropy estimate with proof of kinetic measure at zero}
    Note that the entropy estimate could be used to prove a statement about the kinetic measure at zero if we include the assumption that $\frac{\Phi(\xi)}{\Phi'(\xi)}\leq c\xi$.
    We have 
    \begin{align*}
2\beta^{-1}\mathbb{E}\left(q(U\times[\beta/2,\beta]\times[0,T])\right) \leq \liminf_{\alpha\to0}\mathbb{E}\left(\int_0^T\int_U\int_\mathbb{R}\frac{1}{\xi}\mathbbm{1}_{\beta/2\leq\xi\leq\beta}dq^\alpha\right).
\end{align*}
Subsequently the assumption gives
\begin{align*}
    \frac{1}{\xi}dq^a&=\frac{1}{\xi}\delta_0(\xi-\rho^\alpha)\left(\frac{4\Phi(\rho^\alpha)}{\Phi'(\rho^\alpha)}|\nabla\Phi^{1/2}(\rho^\alpha)|^2+\alpha|\nabla\rho^\alpha|^2\right)\nonumber\\
    &\leq c\delta_0(\xi-\rho^\alpha)\left(4|\nabla\Phi^{1/2}(\rho^\alpha)|^2+\alpha\frac{\Phi'(\rho)}{\Phi(\rho)}|\nabla\rho^\alpha|^2\right).
\end{align*}
One sees that this is the precise quantity which we showed was bounded in the entropy estimate above.
Consequently by dominated convergence theorem, with the indicator present, the kinetic measures go to zero.\\
The reason we do not use this estimate is due to the first term on the right hand side of the estimate, which requires $\rho_0\in L^1(\Omega;Ent(U))$. 
For the definition of stochastic kinetic solution, Definition \ref{definition of stochastic kinetic solution of generalised Dean--Kawasaki equation}, we only have $\rho_0\in L^1(\Omega;L^1(U))$. 
We circumvent this in the sequel by choosing a test function that cuts off the logarithm at $1$.
\end{remark}

\subsection{Decay of kinetic measure at zero}\label{section 4.3 decay of kinetic measure at zero}
In this section we will prove the decay of the kinetic measure at zero required in the uniqueness proof,
    \[\liminf_{\beta\to0}\left(\beta^{-1}q(U\times[\beta/2,\beta]\times[0,T])\right)=0.\]
First of all we begin with a remark that illustrates why we \textcolor{red}{are unable to} bound the decay of the kinetic measure using the kinetic equation as in \textcolor{red}{\cite[Prop.~4.6]{fehrman2024well}}.

\begin{remark}\label{remark outlining why we can't follow the proof of FG on bounding kinetic equation at zero}
Adapting the proof of \textcolor{red}{\cite[Prop.~4.6]{fehrman2024well}}, test the kinetic equation \eqref{kinetic equation} against smooth approximations of the product $\zeta_M\phi_\beta\iota_\gamma$ and subsequently take the limit in the approximations. 
We end up with an additional term when the spatial gradient hits the cutoff $\iota_\gamma$.
    Taking expectation to kill the noise term and taking the limit as $\gamma\to0$ (which we need to take before $M,\beta$ limits due to Remark \ref{remark highlighting singularity in ito correction}) one needs to consider the term
\begin{align*}
    &-\lim_{\gamma\to 0}\mathbb{E}\left(\int_0^t \int_{U}\zeta_M(\rho)\phi_\beta(\rho)\left(\Phi'(\rho)\nabla\rho+\frac{1}{2}F_1[\sigma'(\rho)]^2\nabla\rho +\frac{1}{2} \sigma'(\rho)\sigma(\rho)F_2\right)\cdot \nabla\iota_\gamma(x)\,dx\,ds\right)\nonumber.
\end{align*}
We cannot make sense of this limit because we do not have sufficient regularity of the first two terms. 
For example to take the limit in  the first term we would need to use the trace theorem and therefore require $\nabla\Phi(\rho)\in H^1_{loc}(U)$. 
However, we only have $\Phi(\rho)\in H^1_{loc}(U)$, i.e. $\nabla\Phi(\rho)\in L^2_{loc}(U)$.\\
Note that similar terms arose in the uniqueness proof, for instance recall equation \eqref{new terms in uniqueness}. 
Crucially, there the terms appear as a difference of two solutions and can only be handled because the first two terms above have a sign for every fixed $\gamma>0$.
\end{remark}

We consider the following PDE, where the boundary condition is the logarithm cutoff at $1$.

\begin{definition}[The PDE $v$]\label{definition of v}
    Let $\bar{f}$ be the boundary condition of the regularised equation \eqref{Regularised equation}.
    Define the function $S:[0,\infty)\to[0,\infty)$ by $S(0)=0$ and $S''(\xi)=\frac{1}{\xi}\mathbbm{1}_{0\leq\xi\leq1}$.
Define the harmonic PDE $v:U\to\mathbb{R}$ by 
\begin{equation*}
    \begin{cases}
            -\Delta v = 0 & \text{on} \hspace{5pt}U,\\
    v=S'(\Phi^{-1}(\bar{f}))& \text{on} \hspace{5pt}\partial U.
    \end{cases}
\end{equation*}
\end{definition}

By integrating we have that 
$S'(\xi)=\log(\xi\wedge1)$ and $S(\xi)=(\xi\wedge1)\log(\xi\wedge1)-(\xi\wedge1)$.

\begin{assumption}[Assumptions for kinetic measure]\label{assumption on kinetic measure}
\begin{enumerate}
\item  Either the boundary data $\bar{f}$ is constant or the solution of PDE $v$ satisfies  $S'(\Phi^{-1}(\bar{f}))\in H^1(\partial U)$, where the norm is defined in Theorem \ref{thm: definiton of H^1 boundary norm}.
\item We have that $v\in L^2(U)$.
        \item Either $F_2=0$, or the unique function $\Theta_\sigma$ defined by $\Theta_\sigma(1)=0$, $\Theta_\sigma'(\xi)=\frac{\sigma(\xi)\sigma'(\xi)}{\xi}$ satisfies
        $\Theta_\sigma(\Phi^{-1}(\bar{f})\wedge 1)\in L^1(\partial U)$.
        \item Either $\bar{f}$ is constant, or for the unique function $\Psi_\sigma$ defined by $\Psi_{\sigma}(0)=0$, $\Psi_\sigma'(\xi)=F_1[\sigma'(\xi)]^2$, we have $\Psi_\sigma(\Phi^{-1}(\bar{f}))\in L^2(\partial U)$.
\end{enumerate}
\end{assumption}

For point three, note that the $L^1(\partial U)$ integrability of $\Theta_\sigma$ is guaranteed if, for example, $\sigma(\xi)\sigma'(\xi)\leq c\xi^\alpha$, where $\alpha>0$ and $c\in(0,\infty)$. 

\begin{proposition}\label{proposition about kinetic measure at 0}
     Let $\xi^F, \Phi,\sigma$ and $\nu$ satisfy Assumption \ref{Assumption on noise Fi}, \ref{assumptions on coefficients for uniqueness} and Assumption \ref{assumption on kinetic measure}.
    Let further $\rho_0\in L^1(\Omega;L^1(U))$ be non-negative and $\mathcal{F}_0$ measurable and $v$ be defined as in Definition \ref{definition of v}. 
    If $\rho$ is a stochastic kinetic solution of \eqref{generalised Dean--Kawasaki equation Ito} in the sense of Definition \ref{definition of stochastic kinetic solution of generalised Dean--Kawasaki equation}, then it follows almost surely that
    \[\liminf_{\beta\to0}\left(\beta^{-1}q(U\times[\beta/2,\beta]\times[0,T])\right)=0.\]
\end{proposition}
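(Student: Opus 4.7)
The plan is to apply It\^o's formula to an entropy-like functional at the level of the regularised equation \eqref{Regularised equation}, mirroring the strategy of Proposition \ref{entropy estimate}, and then pass to the limit in the regularisation. The crucial feature exploited is the truncation of the logarithm at $1$: since the resulting function $S$ of Definition \ref{definition of v} is bounded on $[0,\infty)$, the initial entropy will remain controlled for $\rho_0\in L^1(\Omega;L^1(U))$, circumventing the obstruction flagged in Remark \ref{Comparing entropy estimate with proof of kinetic measure at zero}.

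First I would work with a regularised solution $\rho^\alpha$ of \eqref{Regularised equation} and apply It\^o's formula to $\int_U\Psi_S(x,\rho^\alpha(x,t))\,dx$, where $\Psi_S(x,\xi):=S(\xi)-v(x)\xi$ with $S$ and $v$ taken from Definition \ref{definition of v}. By construction $\partial_\xi\Psi_S(x,\rho^\alpha)=S'(\rho^\alpha)-v(x)$ vanishes on $\partial U$, so that the integration by parts that follows produces no first-order boundary contributions. A preliminary mollification of $S''$ away from $\xi=0$ is needed and is recovered at the end by monotone convergence. After taking expectation to kill the martingale terms, rearranging the dissipation term to the left-hand side, and writing the result in terms of the approximate kinetic measure $q^\alpha:=\delta_0(\xi-\rho^\alpha)(\Phi'(\xi)+\alpha)|\nabla\rho^\alpha|^2$, the identity takes the shape
\[\mathbb{E}\int_0^T\!\int_U\!\int_\mathbb{R}\frac{1}{\xi}\mathbbm{1}_{0<\xi\leq 1}\,dq^\alpha\;\leq\;\mathbb{E}\int_U\Psi_S(x,\rho_0)\,dx+R_\alpha,\]
with $R_\alpha$ collecting the It\^o correction and the $\nabla v$ contributions.

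Bounding $R_\alpha$ uniformly in $\alpha$ will follow the mechanism already used in Propositions \ref{energy estimate of regularised equation proposition} and \ref{entropy estimate} and articulated in Remark \ref{remark choice of pdes}: the pairings of $\nabla v$ with other gradient terms become boundary integrals via the harmonicity of $v$; the remaining $\nabla v$ pairings are handled by H\"older's or Young's inequality together with points (2)--(4) of Assumption \ref{assumption on kinetic measure}; and the It\^o correction is absorbed using the explicit support $\{0<\xi\leq 1\}$ of $S''$ combined with the hypotheses on $\sigma,\nu$ from Assumption \ref{assumptions on coefficients for uniqueness}. The initial entropy is controlled by the boundedness of $S$ and by $v\in L^2(U)$ together with Cauchy--Schwarz, after first proving the bound on a class of initial data for which $\int v\rho_0$ is finite and subsequently approximating a general $\rho_0\in L^1(\Omega;L^1(U))$ as in the existence argument.

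The argument will then be concluded by passing $\alpha\to 0$ using the weak convergence $q^\alpha\rightharpoonup q$ established in Section \ref{section 4.2 existence of solution} together with Fatou's lemma, yielding $\mathbb{E}\int\xi^{-1}\mathbbm{1}_{0<\xi\leq 1}\,dq<\infty$. Since on $[\beta/2,\beta]$ one has $\beta^{-1}\leq 2\xi^{-1}$ and $\mathbbm{1}_{[\beta/2,\beta]}(\xi)\to 0$ pointwise as $\beta\to 0$, the dominated convergence theorem almost surely gives $\beta^{-1}q(U\times[\beta/2,\beta]\times[0,T])\to 0$ along a subsequence, which is the claim. The main obstacle will be verifying that every boundary contribution in $R_\alpha$ is genuinely finite under Assumption \ref{assumption on kinetic measure} in the presence of possible vanishing of $\bar{f}$ (where $\log(\Phi^{-1}(\bar{f}))$ blows up); this is precisely the reason that the integrability conditions on $\Theta_\sigma(\Phi^{-1}(\bar{f}))$ and $\Psi_\sigma(\Phi^{-1}(\bar{f}))$ in points (3) and (4) of Assumption \ref{assumption on kinetic measure} have been imposed.
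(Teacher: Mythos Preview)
Your proposal is correct and follows essentially the same route as the paper: apply It\^o's formula to $\Psi_S(x,\xi)=S(\xi)-v(x)\xi$ at the level of the regularised equation, exploit the harmonicity of $v$ and Assumption~\ref{assumption on kinetic measure} to control the boundary and correction terms, and deduce a uniform-in-$\alpha$ bound on $\mathbb{E}\int\xi^{-1}\mathbbm{1}_{0<\xi\leq 1}\,dq^\alpha$ before passing to the limit. The only cosmetic difference is the final step: the paper decomposes $(0,1]$ into dyadic shells $[\beta^{(i)}/2,\beta^{(i)}]$ and observes that summability forces the individual shell masses to vanish, whereas you invoke dominated convergence directly with dominant $2\xi^{-1}\in L^1(dq|_{(0,1]})$; these are equivalent, and your argument in fact yields the full limit rather than just the $\liminf$, so the phrase ``along a subsequence'' is unnecessary.
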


\begin{proof}
Let us begin by noting that, whilst we do not know the precise form of the limiting measure $q$ due to the presence of a parabolic defect measure in the limit, see \cite{chen2003well}, for the regularised equation we have a precise equation for the kinetic measures, see proof of \textcolor{red}{\cite[Prop.~5.12]{fehrman2024well}} given by
\begin{align*}
   dq^\alpha=\delta_0(\xi-\rho^\alpha)\left(\Phi'(\rho^\alpha)|\nabla\rho^\alpha|^2+\alpha|\nabla\rho^\alpha|^2\right).
\end{align*}
However, by Fatou's lemma for measures, we have
\begin{align*}
   \liminf_{\beta\to0}  2\beta^{-1}\mathbb{E}\left(q(U\times[\beta/2,\beta]\times[0,T])\right) &\leq  \liminf_{\beta\to0} \mathbb{E}\left(\int_0^T\int_U\int_{\beta/2}^\beta \frac{1}{\xi}\,dq\right)\\
   &=\liminf_{\beta\to0} \mathbb{E}\left(\int_0^T\int_U\int_{\mathbb{R}}\frac{1}{\xi}\mathbbm{1}_{\beta/2\leq\xi\leq\beta}\,dq\right)\\
   &\leq \liminf_{\alpha\to0}\liminf_{\beta\to0}\mathbb{E}\left(\int_0^T\int_U\int_\mathbb{R}\frac{1}{\xi}\mathbbm{1}_{\beta/2\leq\xi\leq\beta}dq^\alpha\right).
\end{align*}
Analogous to the energy estimates of Proposition \ref{energy estimate of regularised equation proposition} and to the entropy estimate of Proposition \ref{entropy estimate}, apply It\^o's formula to a regularised version of the function $\Psi:U\times[0,\infty)\to\mathbb{R}$ defined by
$\Psi(x,0)=0, \partial_\xi\Psi(x,\xi)=S'(\xi)-v(x).$
One obtains, for the  functions $\Theta_\sigma$ and $\Psi_\sigma$ as in Assumption \ref{assumption on kinetic measure}, $\Theta_{\nu,i}$ defined by $\Theta_{\nu,i}(0)=0$, $\Theta'_{\nu,i}(\xi)=\nu_i(\xi)/\xi$, that there is a constant $c\in(0,\infty)$ such that
\begin{align}\label{equation in kinetic measure at 0 ppn}
   & \mathbb{E}\left(\int_U\int_0^t\int_{\mathbb{R}} \frac{1}{\xi}\mathbbm{1}_{0\leq\xi\leq1}\,dq^\alpha\right)=\mathbb{E}\left(\int_U\int_0^t \frac{1}{\rho}\mathbbm{1}_{0\leq\rho\leq1}\left(\Phi'(\rho)|\nabla\rho|^2+\alpha|\nabla\rho|^2\right)\right)\nonumber\\
   &\leq \mathbb{E}\int_U\left(\Psi(x,\rho_0)-\Psi(x,\rho_t)\right)+ 
   cT+ T\sum_{i=1}^d\int_{\partial U}\Theta_{\nu,i}(\Phi^{-1}(\bar{f})\wedge 1)\cdot\hat{\eta}_i+ cT\int_{\partial U}\Theta_\sigma(\Phi^{-1}(\bar{f})\wedge 1)\nonumber\\
   &+T\|\log(\Phi^{-1}(\bar{f})\wedge 1)\|_{H^1(\partial U)}\left(\|\bar{f}\|_{L^2(\partial U)}+\alpha\|\Phi^{-1}(\bar{f})\|_{L^2(\partial U)}+\|\Psi_\sigma({\Phi^{-1}(\bar{f})})\|_{L^2(\partial U)}\right)\nonumber\\
   &+T\|\nabla v\|_{L^2(U)}\left(c\|\sigma(\rho)\sigma'(\rho)\|_{L^2(U)}+\|\nu(\rho)\|_{L^2(U;\mathbb{R}^d)}\right)
    \end{align}
    Since $\int_{\partial U}|\nabla v|^2=\int_{\partial U}v\frac{\partial v}{\partial \hat{\eta}}=\int_{\partial U}\log(\Phi^{-1}(\bar{f})\wedge 1)\frac{\partial v}{\partial \hat{\eta}}$, if $\Phi^{-1}(\bar{f})>1$ then the terms in the final two lines vanish since $\log(1)=0$.\\    
    Furthermore, we have for the first term on the right hand side, that
    \[\Psi(x,\rho_0)=S(\rho_0)-\rho_0v(x)=(\rho_0\wedge1)\log(\rho_0\wedge1)-(\rho_0\wedge1)-\rho_0v(x).\]
    And so by the non-negativity of the solution, the initial condition and $v$, we have by disposing of the negative terms,
\[\mathbb{E}\int_U\left(\Psi(x,\rho_0)-\Psi(x,\rho_t)\right)\leq \mathbb{E}\int_U\left((\rho_0\wedge1)\log(\rho_0\wedge1)+(\rho_t\wedge1)-\rho_tv(x)\right).\]
Using H\"older's inequality, the second assumption of Assumption \ref{assumption on kinetic measure} and the $L^2(U)$ energy estimate \eqref{first energy estimate of regularised solution}, this term is bounded.\\
Hence putting everything together, we showed that the right hand side of \eqref{equation in kinetic measure at 0 ppn} the final term in the above inequality is  bounded.
However, by working along the dyadic scale $\beta^{(i)}=2^{-i}$ for $i=0,1,\hdots$, we have
\begin{align*}
&\sum_{i=0}^\infty\mathbb{E}\left(\int_U\int_0^t\int_{\mathbb{R}} \frac{1}{\beta^{(i)}}\mathbbm{1}_{\beta^{(i)}/2\leq\xi\leq\beta^{(i)}}\,dq^\alpha\right)\leq \mathbb{E}\left(\int_U\int_0^t\int_{\mathbb{R}} \frac{1}{\xi}\mathbbm{1}_{0\leq\xi\leq1}\,dq^\alpha\right)\leq c.
\end{align*}
The infinite sum being bounded by a constant (that is decreasing in $\alpha$) implies that the individual elements of the sum converge to zero, which proves the claim.
\end{proof}

\subsection{Existence of solution to generalised Dean--Kawasaki equation}\label{section 4.2 existence of solution}
In what follows the arguments are identical to that on the torus and so follow \textcolor{red}{\cite[Sec.~5]{fehrman2024well}}. 
We are therefore brief and just provide the main ideas for completeness.\\
In this subsection we start in Proposition \ref{proposition on existence of weak solution to regularised equation with smooth and bounded sigma} by proving the existence of a weak solution of the regularised Dean--Kawasaki equation with smooth and bounded $\sigma$ (in the sense of Assumption \ref{assumption about smoothing sigma in existence}).
We then show that the constructed weak solution is also a stochastic kinetic solution, in the sense that it satisfies a kinetic equation similar to equation \eqref{kinetic equation}.\\
The goal will then subsequently be in Lemma \ref{lemma on approximating sigma by smooth function} to remove the assumption that $\sigma$ is smooth and bounded, which will be done with an approximation argument.
To take the regularisation $\alpha$ limit, showing the existence of a solution to the generalised Dean--Kawasaki equation \eqref{generalised Dean--Kawasaki equation Ito} is done in Theorem \ref{existence theorem}.

\begin{proposition}[Existence of weak solution to regularised equation \eqref{Regularised equation} with smooth and bounded $\sigma$]\label{proposition on existence of weak solution to regularised equation with smooth and bounded sigma}
Let $\xi^F, \Phi,\sigma$ and $\nu$ satisfy Assumptions \ref{Assumption on noise Fi}, \ref{existence assumptions} and \ref{assumption about smoothing sigma in existence}, and let $\alpha\in(0,1)$.
Let further $\rho_0\in L^2(\Omega;L^2(U))$ be non-negative and $\mathcal{F}_0$ measurable.\\
Then there exists a weak solution of the regularised equation \eqref{Regularised equation} in the sense of Definition \ref{definition of weak Solution of regularised equation}. Additionally the solution satisfies the energy estimates of Proposition \ref{energy estimate of regularised equation proposition}.
 \end{proposition}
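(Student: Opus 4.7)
The plan is to construct the weak solution via a Faedo--Galerkin approximation scheme, following the template of \cite[Sec.~5]{fehrman2024well}. First I would write $\rho = u + g$, where $g$ is the harmonic extension from Definition \ref{definition of functions g and h_M}, which absorbs the non-homogeneous Dirichlet data so that the new unknown $u$ has zero boundary values in the appropriate sense and satisfies a modified version of \eqref{Regularised equation}. I would then fix an orthonormal basis $\{e_k\}_{k \in \mathbb{N}}$ of $L^2(U)$ consisting of smooth eigenfunctions of the Dirichlet Laplacian (smooth up to $\partial U$ since $U$ is $C^2$), and project onto $V_N := \mathrm{span}(e_1,\ldots,e_N)$. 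Testing \eqref{Regularised equation} against $e_1,\ldots,e_N$ yields a finite-dimensional system of SDEs for the Fourier coefficients $(a^k_t)_{k \leq N}$ of $u^N$. The coefficients of this system are locally Lipschitz in the state variables, thanks to the smoothness of $\sigma$ (Assumption \ref{assumption about smoothing sigma in existence}) and the local regularity of $\Phi, \nu$ under Assumption \ref{existence assumptions}, so standard SDE theory furnishes a unique strong solution up to an explosion time.

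To obtain global existence, I would rigorously derive the energy estimate of Proposition \ref{energy estimate of regularised equation proposition} at the Galerkin level (where It\^o's formula applies directly since $u^N$ lives in a finite-dimensional space of smooth functions), obtaining uniform-in-$N$ bounds that preclude explosion and control $\rho^N - g$ in $L^2(\Omega; L^\infty_t L^2_x)$, $\nabla \Theta_\Phi(\rho^N)$ in $L^2(\Omega; L^2_{t,x})$, and $\sqrt{\alpha}\, \nabla \rho^N$ in $L^2(\Omega; L^2_{t,x})$. Combined with the spatial regularity upgrade of Lemma \ref{lemma on higher order spatial regularity of regularised solution} and the time regularity for $\Phi_\beta(\rho^N)$ from Proposition \ref{proposition higher order time regularity of solution cut away from zero set}, an Aubin--Lions type compactness argument yields tightness of the laws of $\rho^N$ on a space such as $L^1([0,T]; L^1(U)) \cap C([0,T]; H^{-s}(U))$. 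Prokhorov's theorem together with the Skorokhod--Jakubowski representation theorem then produces, up to relabelling and passing to a subsequence, almost sure convergence $\rho^N \to \rho$ on a new filtered probability space, and the uniform bounds transfer to the limit by weak lower semicontinuity.

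The limit identification contains the main obstacles. Almost sure $L^1_{t,x}$ convergence together with continuity of $\Phi', \sigma, \sigma', \nu$ and uniform integrability coming from the $L^2$ bounds handles the pathwise drift and It\^o-correction terms. The stochastic integral $\int_0^t\int_U \sigma(\rho^N)\nabla\psi \cdot d\xi^F$ is then identified in the limit by a standard martingale-representation argument, passing to the limit term-by-term in the Brownian expansion of $\xi^F$ after exploiting the convergence $\sigma(\rho^N)\to\sigma(\rho)$ in $L^2_{t,x}$ that follows from the Skorokhod step and the bound in point~5 of Assumption \ref{existence assumptions}. The subtlest point is non-negativity of $\rho$, since orthogonal projections $P_N$ do not preserve positivity; I would build positivity into the scheme by extending the smoothed $\sigma$ to vanish for negative arguments and testing the Galerkin equation against a smooth approximation of $(\rho^N)^-$, exploiting $\sigma(0)=0$ and the degeneracy of the noise at zero to conclude $\rho^N \geq 0$ almost surely. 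Finally, the regularity clauses $\rho - g \in L^2([0,T]; H^1_0(U))$ and $\Theta_\Phi(\rho) \in L^2([0,T]; H^1(U))$ of Definition \ref{definition of weak Solution of regularised equation} follow from the uniform Galerkin bounds and weak lower semicontinuity, while the energy estimates of Proposition \ref{energy estimate of regularised equation proposition} are inherited by the limit by the same semicontinuity argument.
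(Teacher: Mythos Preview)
Your strategy is essentially the paper's: Galerkin projection, uniform energy bounds, Aubin--Lions compactness, and identification of the limit. However, there is a genuine gap in your scheme compared with the paper's, and it concerns the well-posedness of the finite-dimensional Galerkin SDE itself.

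You assert that the projected system has locally Lipschitz coefficients ``thanks to the local regularity of $\Phi,\nu$ under Assumption~\ref{existence assumptions}''. This is not true in general: Assumption~\ref{existence assumptions} only gives $\Phi\in C([0,\infty))\cap C^1_{\mathrm{loc}}((0,\infty))$, and in the fast-diffusion model case $\Phi(\xi)=\xi^m$ with $m<1$ one has $\Phi'(\xi)=m\xi^{m-1}\to\infty$ as $\xi\to0^+$, so the drift is not even locally bounded near zero. Moreover, since the Galerkin projection does not preserve non-negativity, $\rho^N=u^N+g$ will in general take negative values, so $\Phi(\rho^N)$, $\nu(\rho^N)$ are not defined without an extension, and there is no canonical extension that is simultaneously locally Lipschitz and compatible with the energy structure. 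The paper deals with this by introducing an \emph{additional} layer of approximation: smooth bounded non-decreasing $\Phi_n\to\Phi$ and smooth $\nu_n\to\nu$, together with a truncation $\xi^{F,K}$ of the noise to finitely many modes, so that the Galerkin system genuinely has smooth bounded coefficients and a global strong solution. One then passes to the limit in the order $M\to\infty$, $K\to\infty$, $n\to\infty$. Your single-limit scheme collapses these steps and in doing so loses well-posedness at the approximate level.

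A secondary issue is your non-negativity argument. Testing the Galerkin equation against a smooth approximation of $(\rho^N)^-$ is not admissible, since only test functions in $V_N$ are allowed and $(\rho^N)^-\notin V_N$ in general; equivalently, the projection $P_N$ does not commute with the nonlinear map $\rho\mapsto S'(\rho)$ needed in It\^o's formula. The paper instead establishes non-negativity for the \emph{limiting} weak solution (after $M,K,n$ have been sent to their limits) by applying It\^o's formula to smooth approximations of $\min(0,\rho)$, in the spirit of the second estimate~\eqref{second energy estimate of regularised solution}.
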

 
\begin{proof}
The idea is to approximate all coefficients by regular ones, use Galerkin projection argument to show existence and then take limits in the correct order.\\
Start by considering a sequence $\{\Phi_n\}_{n\in\mathbb{N}}$ of smooth bounded non-decreasing functions starting at zero such that $\Phi_n$ and $\Phi_n'$ converge locally uniformly to $\Phi$ and $\Phi'$ as $n\to\infty$.\\
Next consider sequence $\{\nu_n\}_{n\in\mathbb{N}}$ of smooth approximations of $\nu$ that converge to $\nu$ locally uniformly as $n\to\infty$.\\
For $K\in\mathbb{N}$ consider the finite dimensional approximation of the noise $\xi^{F,K}:=\sum_{k=1}^Kf_k(x)B_t^k$, and also the truncated coefficients $F_1^K:=\sum_{k=1}^K f_k^2$, $F_2^K:=\sum_{k=1}^K f_k\nabla f_k$. \\
For $\{e_k\}_{k\in\mathbb{N}}$ an orthonormal basis in $L^2(U)$ which is orthogonal in $H^1(U)$ and $M\in\mathbb{N}$, define $\Pi_M:L^2(U\times[0,T])\to L^2(U\times[0,T])$ be the projection onto the first $M$ orthonormal basis vectors.
That is to say, for any $g\in L^2(U\times[0,T])$,
\[\Pi_M g(x,t):=\sum_{k=1}^M g_k(t)e_k(x)\]
where $g_k(t):=\int_U g(x,t)e_k(x)\,dx$.
For brevity denote the space of projected $L^2$ function by $L^2_M:=\Pi_M(L^2(U\times[0,T]))$. 
Consider the below projected equation with regularised coefficients posed on the space $L^2(\Omega;L^2_M)$
\begin{align*}
        d\rho=&\Pi_M\left(\Delta\Phi_n (\rho)\,dt +\alpha\Delta\rho\,dt-\nabla\cdot (\sigma(\rho) \,d{\xi}^{F,K} + \nu_n(\rho)\,dt)\right)\\
        &+ \Pi_M\left( \frac{1}{2}\nabla\cdot(F_1^K[\sigma'(\rho)]^2\nabla\rho + \sigma'(\rho)\sigma(\rho)F_2^K)\,dt\right).
\end{align*}
The equation is equivalent to a finite dimensional stochastic differential equation. Since $\Phi_n, \nu_n,\sigma$ are all smooth and bounded functions, the system has a unique strong solution.\\
Then we pass to the various limits, first as $M\to \infty$, followed by $K\to\infty$ and finally the limit as $n\to\infty$. 
We rely on simpler versions of the energy estimates of the previous section (e.g. equation \eqref{energy estimate of regularised equation proposition} and Proposition \ref{proposition higher order time regularity of solution cut away from zero set}) and the compact embedding given by Aubin--Lions--Simon lemma (\cite{aubin1963theoreme,lions1969quelques,simon1986compact}) to do this.\\
The resulting solution is continuous in $L^2(U)$ as a consequence of It\^o's formula. The non-negativity of solution follows by applying It\^o's formula to approximations of $min(0,\rho)$, similar to what was done in estimate \eqref{second energy estimate of regularised solution}.
 \end{proof}

 \begin{proposition}[Stochastic kinetic solution of regularised DK equation \eqref{Regularised equation} with smooth and bounded $\sigma$]\label{proposition on stochastic kinetic solution of regularised DK equation with smooth and bounded sigma}
  Let $\xi^F, \Phi,\sigma$ and $\nu$ satisfy Assumptions \ref{Assumption on noise Fi}, \ref{existence assumptions} and \ref{assumption about smoothing sigma in existence}, and let $\alpha\in(0,1)$.
     Let $\rho_0\in L^{m+1}(\Omega;L^1(U))\cap L^2(\Omega;L^2(U))$ be non-negative and $\mathcal{F}_0$ measurable.\\
     Let $\rho$ be a weak solution of \eqref{Regularised equation} with smooth and bounded $\sigma$ in the sense of Definition \ref{definition of weak Solution of regularised equation}, and $\chi(x,\xi,t)=\mathbbm{1}_{0<\xi<\rho(x,t)}$ be the kinetic function on $U\times(0,\infty)\times[0,T]$. Then $\rho$ is a stochastic kinetic solution in the sense that, almost surely, for every $\psi\in C_c^{\infty}(U\times\mathbb{R})$, $t\in[0,T]$:
     \small
     \begin{align*}\label{kinetic equation of regularised solution}
\int_{\mathbb{R}}\int_{U}\chi(x,\xi,t)\psi(x,\xi)\,dx\,d\xi&=\int_{\mathbb{R}}\int_{U}\chi(x,\xi,t=0)\psi(x,\xi)\,dx\,d\xi -\alpha\int_0^t\int_U\nabla\rho\cdot\nabla\psi(x,\xi)|_{\xi=\rho}\,\nonumber\\
&-\int_0^t \int_{U}\left(\Phi'(\rho)\nabla(\rho)+\frac{1}{2}F_1[\sigma'(\rho)]^2\nabla\rho +\frac{1}{2} \sigma'(\rho)\sigma(\rho)F_2\right)\cdot\nabla\psi(x,\xi)|_{\xi=\rho}\,dx\,dt\nonumber\\
    &-\int_0^t\int_{U}\partial_\xi\psi(x,\rho)\Phi'(\xi)|\nabla\rho|^2-\alpha\int_0^t\int_U\partial_\xi\psi (x,\rho)|\nabla\rho|^2\\
    &+\frac{1}{2}\int_0^t \int_{U}\left(
    \sigma'(\rho)\sigma(\rho)\nabla\rho\cdot F_2 + \sigma(\rho)^2F_3 \right)\partial_\xi\psi(x,\rho)\,dx\,dt\nonumber\\
    & -\int_0^t \int_{U}\psi(x,\rho)\nabla\cdot (\sigma(\rho) \,d{\xi}^F)\,dx -        \int_0^t \int_{U}\psi(x,\rho)\nabla\cdot\nu(\rho).
\end{align*}
\normalsize
The derivatives of the test function are again interpreted in the sense of Remark \ref{remark about notation in kinetic equation}.
 \end{proposition}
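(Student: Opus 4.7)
The central identity is that for any $\psi \in C_c^\infty(U \times \mathbb{R})$ and its partial antiderivative $\Psi(x,\xi) := \int_0^\xi \psi(x,\eta)\,d\eta$, one has
\[
\int_\mathbb{R} \int_U \chi(x,\xi,t)\, \psi(x,\xi) \, dx \, d\xi = \int_U \Psi(x, \rho(x,t)) \, dx.
\]
The target kinetic equation is therefore nothing but the outcome of applying It\^o's formula to the functional $t \mapsto \int_U \Psi(x, \rho(x,t))\,dx$ and reorganising each resulting piece to match a line of the stated identity; this is the plan.

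For the drift, I would substitute the weak equation of Definition \ref{definition of weak Solution of regularised equation} tested against $\psi(\cdot, \rho(\cdot,t))$, which lies in $H^1_0(U)$ because $\psi$ is compactly supported in $U$, so every divergence-form term integrates by parts with no boundary contribution. The two-variable chain rule $\nabla[\psi(x,\rho)] = \nabla_x \psi(x,\xi)|_{\xi=\rho} + \partial_\xi \psi(x,\rho) \nabla \rho$ splits each contribution into a transport piece (populating the $\nabla \psi$ lines of the target) and an $\partial_\xi \psi$ defect piece; the defects from $\Delta \Phi(\rho)$ and $\alpha \Delta \rho$ produce exactly the two kinetic-measure terms on the third line. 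For the noise, the pointwise quadratic variation is
\[
d \langle \rho \rangle_t = \bigl( F_1 (\sigma'(\rho))^2 |\nabla \rho|^2 + 2 \sigma(\rho)\sigma'(\rho) F_2 \cdot \nabla \rho + F_3 \sigma^2(\rho) \bigr) \, dt,
\]
which enters It\^o's formula with weight $\tfrac{1}{2} \partial_\xi \psi(x,\rho)$. Its first summand cancels exactly with the defect produced by the Stratonovich correction $\tfrac{1}{2} \nabla \cdot (F_1 (\sigma'(\rho))^2 \nabla \rho)$; its second summand combines with the defect produced by $\tfrac{1}{2} \nabla \cdot (\sigma'(\rho)\sigma(\rho) F_2)$ to leave the $+\tfrac{1}{2} \partial_\xi \psi \, \sigma'(\rho)\sigma(\rho) F_2 \cdot \nabla \rho$ piece of the fourth line; and its third summand gives the remaining $+\tfrac{1}{2} \partial_\xi \psi F_3 \sigma^2(\rho)$ term. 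The stochastic integral from $\psi(x,\rho)\,d\rho$ supplies the martingale line, and $\psi(x,\rho)\, \nabla \cdot \nu(\rho)$ the conservative line.

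The main obstacle is that $\rho$ has only the regularity of a weak solution, so the spatially inhomogeneous It\^o formula applied to $\int_U \Psi(x,\rho)\,dx$ cannot be invoked pathwise in one step. I would resolve this by carrying out the derivation first at the level of the Galerkin approximations $\rho^M$ constructed in Proposition \ref{proposition on existence of weak solution to regularised equation with smooth and bounded sigma}: each $\rho^M$ is a smooth It\^o diffusion in a finite-dimensional subspace of $L^2(U)$, so the classical It\^o formula applies directly and the computation above yields the kinetic identity for $\rho^M$. The limits $M \to \infty$, $K \to \infty$, and $n \to \infty$ (the three successive regularisations of the space, the noise, and $\Phi$) are then taken using the uniform a priori bounds of Section \ref{section 4.1 a priori estimates for regularised equation} and the Aubin--Lions--Simon compactness implicit in Proposition \ref{proposition higher order time regularity of solution cut away from zero set}. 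The smoothness and boundedness of $\sigma$ supplied by Assumption \ref{assumption about smoothing sigma in existence}, the polynomial growth of $\Phi$ from Assumption \ref{existence assumptions}, and the moment hypothesis $\rho_0 \in L^{m+1}(\Omega;L^1(U)) \cap L^2(\Omega;L^2(U))$ are what guarantee uniform integrability of every term, and in particular of the defect $\Phi'(\rho)|\nabla \rho|^2$ which identifies the approximate kinetic measure $q^\alpha$, throughout this passage to the limit.
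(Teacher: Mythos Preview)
Your proposal is correct and follows essentially the same strategy as the paper: apply It\^o's formula to a function of $\rho$, identify each term, and justify the formal computation via the approximating sequences of Proposition \ref{proposition on existence of weak solution to regularised equation with smooth and bounded sigma}. The only cosmetic difference is that the paper first works with separable test functions $S'(\xi)\psi(x)$ (applying It\^o to $S(\rho)$, then testing against $\psi\in C_c^\infty(U)$ in the weak formulation) and concludes by the density of linear combinations of such products in $C_c^\infty(U\times\mathbb{R})$, whereas you go directly to the general antiderivative $\Psi(x,\xi)$; the computations and the use of the Galerkin level to make It\^o rigorous are otherwise identical.
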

 
 \begin{proof}[Proof (idea)]
 The proof follows precisely the steps for deriving the stochastic kinetic equation \eqref{kinetic equation}. 
 Begin by using It\^o's formula to derive an equation for $S(\rho)$ for a smooth and bounded function $S:\mathbb{R}\to\mathbb{R}$. Secondly derive a formula for the integral
 \[\int_U S(\rho)\psi(x)\]
 for test function $\psi\in C_c^\infty(U)$ using Definition \ref{definition of weak Solution of regularised equation} of a weak solution. 
 Finally the kinetic equation is derived, noting the density of linear combinations of functions of the form $S'(\xi)\psi(x)$ in $C_c^\infty(U\times\mathbb{R})$.\\
 One noteworthy point is that, as mentioned above, the kinetic measure corresponding to the solution $\rho$ constructed above is
 \[q=\delta_0(\xi-\rho)\Phi'(\xi)|\nabla\rho|^2+\alpha\delta_0(\xi-\rho)|\nabla\rho|^2=\delta_0(\xi-\rho)\left(|\nabla\Theta_\Phi(\rho)|^2+\alpha|\nabla\rho|^2\right).\]
 The measure is finite due to the estimates of Proposition \ref{energy estimate of regularised equation proposition} and satisfies the other assumptions of a kinetic measure as in Definition \ref{definition of stochastic kinetic solution of generalised Dean--Kawasaki equation} due to Assumption \ref{existence assumptions}.\\
 For further details see proof of \textcolor{red}{\cite[Prop.~5.21]{fehrman2024well}}.
 \end{proof}
 
 Next we wish to extend the well-posedness to the generalised Dean--Kawasaki equation \eqref{generalised Dean--Kawasaki equation Ito}. The first step is to dispense of the regularity assumption on $\sigma$ of Assumption \ref{assumption about smoothing sigma in existence}.
 
\begin{lemma}[Approximating $\sigma$ in $C^1_{loc}$]\label{lemma on approximating sigma by smooth function}
Let $\sigma$ satisfy Assumption \ref{existence assumptions}. 
Then one can find a sequence $\{\sigma_n\}_{n\in\mathbb{N}}$ such that for each $n$, $\sigma_n$ satisfies Assumption \ref{assumption about smoothing sigma in existence}. 
Further, the sequence uniformly satisfy Assumption \ref{existence assumptions} and $\sigma_n\to\sigma$ in $C^1_{loc}((0,\infty))$.     
\end{lemma}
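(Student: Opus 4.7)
The plan is to construct $\sigma_n$ by mollifying $\sigma$ to obtain $C^\infty$-smoothness on $(0,\infty)$ and then truncating so that the derivative becomes compactly supported, while ensuring that the condition $\sigma_n(0) = 0$ is preserved exactly and that the growth bounds of Assumption \ref{existence assumptions} are met uniformly in $n$.

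First I would extend $\sigma$ to an odd function on $\mathbb{R}$ by $\sigma_{\text{ext}}(\xi) := \operatorname{sgn}(\xi)\sigma(|\xi|)$, which is continuous with $\sigma_{\text{ext}}(0) = 0$, and convolve against a symmetric mollifier $\kappa_{1/n}$ to obtain $\tilde\sigma_n := \sigma_{\text{ext}} \ast \kappa_{1/n}$. The oddness gives $\tilde\sigma_n(0) = 0$ exactly, and standard mollification theory combined with Assumption \ref{existence assumptions} point 1 yields $\tilde\sigma_n \to \sigma$ in $C^1_{loc}((0,\infty))$. Next, pick a smooth cutoff $\eta_n : [0,\infty) \to [0,1]$ with $\eta_n \equiv 1$ on $[0,n]$ and $\eta_n \equiv 0$ on $[n+1,\infty)$, and \emph{define} $\sigma_n$ through its derivative by $\sigma_n'(\xi) := \tilde\sigma_n'(\xi)\,\eta_n(\xi)$ and $\sigma_n(\xi) := \int_0^\xi \sigma_n'(s)\,ds$. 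Then $\sigma_n(0) = 0$, $\sigma_n \in C^\infty((0,\infty)) \cap C([0,\infty))$, and $\sigma_n' \in C_c^\infty([0,\infty))$ with support in $[0,n+1]$, so Assumption \ref{assumption about smoothing sigma in existence} holds for each $n$. On any compact $K \subset (0,\infty)$ one has $\eta_n \equiv 1$ on $K$ and the mollification parameter $1/n$ is eventually smaller than the distance from $K$ to the origin, so $\sigma_n \to \sigma$ in $C^1(K)$ follows from the corresponding convergence of $\tilde\sigma_n$.

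The main obstacle is verifying that the growth-type bounds of Assumption \ref{existence assumptions} (notably points 5, 6, and 8) transfer to the sequence with constants independent of $n$. For point 5 on $[0,n]$, pointwise control of the mollification by the oscillation of $\sigma$ combined with $\sigma^2 \leq c(1+\xi+\Phi(\xi))$ gives $\sigma_n^2 \leq 2c(1+\xi+\Phi(\xi)) + o(1)$; on $[n,\infty)$ the function $\sigma_n$ is constant with value bounded by $\sup_{[0,n+1]}|\tilde\sigma_n| \leq c(1+n+\Phi(n))$, which dominates $c(1+\xi+\Phi(\xi))^{-1}$ can be converted to the required bound on $[n,\infty)$ by monotonicity of $\Phi$. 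For point 6, on $(\delta,\infty)$ with $n$ so large that $1/n < \delta/2$, one has $\sigma_n'(\xi) = (\sigma' \ast \kappa_{1/n})(\xi)\eta_n(\xi)$, and Jensen's inequality applied to the convex map $x \mapsto x^4$ transfers the bound $(\sigma'(\xi))^4/\Phi'(\xi) \leq c_\delta(1+\xi+\Phi(\xi))$ from $\sigma'$ to $\sigma_n'$ with a slightly enlarged constant, uniformly in $n$. Point 8 and the $L^1(\partial U)$ integrability in point 10 follow from analogous local arguments, since these depend only on the pointwise behaviour of $\sigma$ and $\sigma\sigma'$, which is preserved by mollification up to harmless multiplicative constants; the assumptions on $\Phi$ and $\nu$ are unaffected by the construction.
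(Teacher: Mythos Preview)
Your approach---odd extension, mollification, then truncation of the derivative so that $\sigma_n'\in C_c^\infty$---is precisely the construction the paper has in mind; the paper's own proof is a single sentence stating that ``the proof follows from constructing smooth bounded approximations by convolution which can be done due to the regularity of $\sigma$,'' so your write-up is already more explicit than what appears there. Two small points worth tightening: your sentence on point~5 for $\xi\in[n,\infty)$ is garbled (the intended estimate, that the constant value $\sigma_n(n+1)$ is controlled by $c(1+n+\Phi(n))\leq c(1+\xi+\Phi(\xi))$ via monotonicity of $\Phi$, is correct); and in your Jensen argument for point~6, after bounding $(\sigma_n')^4\leq(\sigma')^4\ast\kappa_{1/n}$ you divide by $\Phi'(\xi)$ rather than $\Phi'(\xi-y)$, so you implicitly need $\Phi'(\xi-y)/\Phi'(\xi)$ bounded for $|y|\leq 1/n$, which under Assumption~\ref{existence assumptions} alone (where $\Phi'$ is merely continuous) requires a brief justification or an appeal to the $C^{1,1}_{\mathrm{loc}}$ hypothesis from Assumption~\ref{assumptions on coefficients for uniqueness}.
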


The proof follows from constructing smooth bounded approximations by convolution which can be done due to the regularity of $\sigma$ from Assumption \ref{existence assumptions}.\\
 The difficulty in extending the well-posedness to \eqref{generalised Dean--Kawasaki equation Ito} is that the weak solution constructed in Proposition \ref{proposition on existence of weak solution to regularised equation with smooth and bounded sigma} does not have a stable $W^{\beta,1}_t H^{-s}$ energy estimate.
 We only have stable $W^{\beta,1}_t H^{-s}$ for the solution bounded away from its zero set, as in Proposition \ref{proposition higher order time regularity of solution cut away from zero set}.
 We deal with this by defining the below metric on $L^1_tL^1_x$. 
 Tightness of the cutoff solution $\Phi_\delta(\rho)$ as in Definition \eqref{ definition of cutoff away from 0} will be proved with respect to this metric.
 
\begin{definition}[New metric on $L^1_tL^1_x$]\label{definition of new metric on L1L1} For $\delta\in(0,1)$ let $\Psi_\delta$ be defined as in Definition \eqref{ definition of cutoff away from 0}. Define $D:L^1([0,T]; L^1(U))\to[0,\infty)$ by
   \begin{align*}
       D(f,g)=\sum_{k=1}^{\infty}2^{-k}\left(\frac{\|\Psi_{1/k}(f)-\Psi_{1/k}(g)\|_{L^1([0,T];L^1(U))}}{1+\|\Psi_{1/k}(f)-\Psi_{1/k}(g)\|_{L^1([0,T];L^1(U))}}\right).
   \end{align*} 
\end{definition}

\begin{lemma}
    The function $D$ defined above is a metric on $L^1([0,T];L^1(U))$. The metric topology defined by $D$ coincides with the strong norm topology on $L^1([0,T];L^1(U))$.
\end{lemma}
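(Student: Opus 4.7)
I would first verify the three metric axioms. Non-negativity and symmetry are immediate from the definition. For definiteness, if $D(f,g)=0$ then every summand vanishes, so $\Phi_{1/k}(f)=\Phi_{1/k}(g)$ almost everywhere on $U\times[0,T]$ for every $k\in\mathbb{N}$; since $\Phi_{1/k}(\xi)\to\xi$ pointwise on $[0,\infty)$ as $k\to\infty$ (this is where the implicit non-negativity of elements in the relevant subspace is used), we conclude $f=g$ almost everywhere. For the triangle inequality, the key observation is that $\phi(x):=x/(1+x)$ is non-decreasing and subadditive on $[0,\infty)$: concavity together with $\phi(0)=0$ gives $\phi(a+b)\leq\phi(a)+\phi(b)$. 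Applying the usual $L^1$ triangle inequality to the differences $\Phi_{1/k}(f)-\Phi_{1/k}(g)=\bigl(\Phi_{1/k}(f)-\Phi_{1/k}(h)\bigr)+\bigl(\Phi_{1/k}(h)-\Phi_{1/k}(g)\bigr)$ and then $\phi$ together with its subadditivity, I would obtain $D(f,g)\leq D(f,h)+D(h,g)$ after multiplying by $2^{-k}$ and summing.

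For the topology equivalence, I would rely on two quantitative facts about $\Phi_\delta$ (written here for $\delta=1/k$). First, $\Phi_\delta$ is Lipschitz with a constant independent of $\delta$: since $\Phi_\delta'(\xi)=\tilde\phi_\delta'(\xi)\xi+\tilde\phi_\delta(\xi)$ and the derivative $\tilde\phi_\delta'$ is supported in $[\delta/2,\delta]$ with $|\tilde\phi_\delta'|\leq c/\delta$, the product $|\tilde\phi_\delta'(\xi)\xi|$ is bounded by a constant on this support, so $|\Phi_\delta'|\leq c+1$ uniformly. Second, $|\Phi_\delta(\xi)-\xi|\leq\delta$ for $\xi\geq 0$, because $(1-\tilde\phi_\delta)$ is supported in $\xi\leq\delta$ and bounded there. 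These two facts have direct consequences at the level of norms: uniform Lipschitzness gives $\|\Phi_{1/k}(f)-\Phi_{1/k}(g)\|_{L^1([0,T];L^1(U))}\leq c\|f-g\|_{L^1([0,T];L^1(U))}$, and the truncation error is bounded uniformly in $f$ by $\|f-\Phi_{1/k}(f)\|_{L^1([0,T];L^1(U))}\leq|U|T/k$.

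The forward implication is now straightforward: if $f_n\to f$ in $L^1([0,T];L^1(U))$, the uniform Lipschitz estimate implies $\|\Phi_{1/k}(f_n)-\Phi_{1/k}(f)\|_{L^1}\to 0$ for each fixed $k$, and then $D(f_n,f)\to 0$ by dominated convergence in the sum, since each summand is bounded by $2^{-k}$. The converse is the main obstacle: $D$-convergence only controls the truncated versions of $f_n$, and one must recover convergence of $f_n$ themselves. To handle this, observe that if $D(f_n,f)\to 0$ then each non-negative term $2^{-k}\,\phi(\|\Phi_{1/k}(f_n)-\Phi_{1/k}(f)\|_{L^1})\to 0$, so by continuity of $\phi$ and the fact that $\phi(x)=0$ iff $x=0$, we get $\|\Phi_{1/k}(f_n)-\Phi_{1/k}(f)\|_{L^1}\to 0$ for every fixed $k$. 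Decomposing
\[
f_n-f=\bigl(f_n-\Phi_{1/k}(f_n)\bigr)+\bigl(\Phi_{1/k}(f_n)-\Phi_{1/k}(f)\bigr)+\bigl(\Phi_{1/k}(f)-f\bigr),
\]
the outer two terms are each controlled uniformly in $n$ by $|U|T/k$, so given $\varepsilon>0$ I first pick $k$ so large that the truncation errors are each below $\varepsilon/3$, and then pick $n$ large enough that the middle term is below $\varepsilon/3$, concluding $\|f_n-f\|_{L^1}<\varepsilon$. The essential input making this argument work is that $U\times[0,T]$ has finite measure, so the $L^\infty$ pointwise bound $\delta$ on the truncation error upgrades to an $L^1$ bound that is uniform over all elements of the space.
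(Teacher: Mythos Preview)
Your proof is correct. The paper does not supply its own argument here but defers to \cite[Lem.~5.24]{fehrman2024well}, so a direct comparison is not possible; that said, your verification is the standard one and matches what one would expect: the metric axioms via subadditivity of $x\mapsto x/(1+x)$ together with $\Phi_{1/k}(\xi)\to\xi$ pointwise (you rightly flag the implicit non-negativity needed for definiteness), and the topology equivalence via the uniform Lipschitz bound on $\Phi_\delta$ and the uniform truncation estimate $|\Phi_\delta(\xi)-\xi|\leq\delta$, which on the finite-measure set $U\times[0,T]$ gives the required $L^1$ control independent of the function.
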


The proof of the above lemma can be found in \textcolor{red}{\cite[Lem.~5.24]{fehrman2024well}}.
Instead of assuming $\sigma$ in the regularised equation \eqref{Regularised equation} satisfies Assumption \ref{assumption about smoothing sigma in existence}, we define an approximate equation with $\sigma_n$ as defined in Lemma \ref{lemma on approximating sigma by smooth function} approximating $\sigma$.

\begin{definition}[Regularised and smoothed $\sigma$ equation]\label{definiton of regularised and smoothed sigma equation}
      Let $\xi^F, \Phi,\sigma$ and $\nu$ satisfy Assumptions \ref{Assumption on noise Fi} and \ref{existence assumptions}, and let $T\in[1,\infty)$.
     Let $\rho_0\in L^2(\Omega;L^2(U))$ be non-negative and $\mathcal{F}_0$ measurable, and $\sigma_n$ as in Lemma \ref{lemma on approximating sigma by smooth function}.\\
     For every $n\in\mathbb{N}$ and $\alpha\in(0,1)$, define $\rho^{a,n}$ to be the stochastic kinetic solution of
     \begin{align}\label{regularised and sigma smoothed dean kawaski equation}
    d\rho^{\alpha,n}&=\Delta\Phi (\rho^{\alpha,n})\,dt +\alpha\Delta\rho^{\alpha,n}\,dt-\nabla\cdot (\sigma_n(\rho^{\alpha,n}) \,d{\xi}^F + \nu(\rho^{\alpha,n})\,dt)\nonumber\\
    &+ \frac{1}{2}\nabla\cdot(F_1[\sigma_n'(\rho^{\alpha,n})]^2\nabla\rho^{\alpha,n} + \sigma_n'(\rho^{\alpha,n})\sigma_n(\rho^{\alpha,n})F_2)\,dt,
\end{align}
in $U\times(0,T)$ with initial data $\rho_0$ and boundary condition $\Phi(\rho)=\bar{f}$ as constructed in Proposition \ref{proposition on stochastic kinetic solution of regularised DK equation with smooth and bounded sigma}.
\end{definition}

The below proposition is a key element of the existence proof. The proof can be found in \textcolor{red}{\cite[Prop.~5.26, Prop.~5.27]{fehrman2024well}}.

\begin{proposition}[Tightness of laws of $\rho^{\alpha,n}$ in $L^1_tL^1_x$ and of martingale term in $C^\gamma_t$]\label{proposition on tightness of rho and maringale term}
    Let $\xi^F, \Phi,\sigma$ and $\nu$ satisfy Assumptions \ref{Assumption on noise Fi} and \ref{existence assumptions}.
     Let $\rho_0\in L^2(\Omega;L^2(U))$ be non-negative and $\mathcal{F}_0$ measurable, $\sigma_n$ as in Lemma \ref{lemma on approximating sigma by smooth function} and kinetic solutions $\rho^{\alpha,n}$ be as in Definition \ref{definiton of regularised and smoothed sigma equation}.
     \begin{enumerate}
         \item The laws of $\{\rho^{\alpha,n}\}_{\alpha\in(0,1), n\in\mathbb{N}}$ are tight on $L^1([0,T];L^1(U))$ with respect to the strong norm topology.
         \item For each test function $\psi\in C_c^{\infty}(U\times(0,\infty))$, $\gamma\in(0,1/2)$ the laws of the martingales
     \[M_t^{\alpha,n,\psi}:=\int_0^t\int_U\psi(x,\rho^{\alpha,n})\nabla\cdot(\sigma_n(\rho^{\alpha,n})d\xi^F)\]
     are tight on $C^\gamma([0,T])$.
     \end{enumerate}     
\end{proposition}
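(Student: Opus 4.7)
The plan is to establish each tightness statement using a Kolmogorov-type argument adapted to the two different function spaces, relying on the uniform estimates proved in Section \ref{section 4.1 a priori estimates for regularised equation} and the design of the metric $D$ in Definition \ref{definition of new metric on L1L1}.

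For the first claim, the central observation is that, by the definition of $D$ together with the equivalence of the metric and norm topologies, tightness of $\{\rho^{\alpha,n}\}$ in $L^1([0,T];L^1(U))$ is equivalent to tightness of $\{\Phi_{1/k}(\rho^{\alpha,n})\}_{\alpha,n}$ in $L^1([0,T];L^1(U))$ for every fixed $k\in\mathbb{N}$. This reduction is what makes the estimates available to us actually useful: we have no stable $W^{\delta,1}_t H^{-s}_x$ regularity for $\rho^{\alpha,n}$ itself, but Proposition \ref{proposition higher order time regularity of solution cut away from zero set} gives a uniform-in-$\alpha,n$ bound on $\mathbb{E}\|\Phi_{1/k}(\rho^{\alpha,n})\|_{W^{\delta,1}([0,T];H^{-s}(U))}$ for any fixed $\delta\in(0,1/2)$ and $s>d/2+1$. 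Combined with the uniform-in-$\alpha,n$ spatial regularity estimate for $\rho^{\alpha,n}$ from the Corollary after Lemma \ref{lemma on higher order spatial regularity of regularised solution} (which transfers to $\Phi_{1/k}(\rho^{\alpha,n})$ since $\Phi_{1/k}$ is smooth and Lipschitz), we obtain a uniform bound on $\mathbb{E}\|\Phi_{1/k}(\rho^{\alpha,n})\|_X$ for the intermediate Banach space $X:=L^1([0,T];W^{\beta,1}(U))\cap W^{\delta,1}([0,T];H^{-s}(U))$ (or with $W^{1,1}$ if $\Phi$ satisfies \eqref{1 of 2 point 4 existence assumption}). I will then invoke the Aubin--Lions--Simon lemma with the compact embedding $W^{\beta,1}(U)\hookrightarrow\hookrightarrow L^1(U)$ and the continuous embedding $L^1(U)\hookrightarrow H^{-s}(U)$ to conclude that balls in $X$ are precompact in $L^1([0,T];L^1(U))$. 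Markov's inequality applied to the uniform bound then yields tightness of each $\{\Phi_{1/k}(\rho^{\alpha,n})\}_{\alpha,n}$ in $L^1([0,T];L^1(U))$, and hence the claim via the metric $D$.

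For the second claim, I would apply the Kolmogorov--Chentsov continuity criterion to $M^{\alpha,n,\psi}$. By the Burkholder--Davis--Gundy inequality, for any $p\in[2,\infty)$ and $0\leq s\leq t\leq T$,
\[
\mathbb{E}|M_t^{\alpha,n,\psi}-M_s^{\alpha,n,\psi}|^p\leq c_p\, \mathbb{E}\left(\int_s^t\sum_{k=1}^\infty \left|\int_U\psi(x,\rho^{\alpha,n})\nabla\cdot(\sigma_n(\rho^{\alpha,n})f_k)\,dx\right|^2 dr\right)^{p/2}.
\]
Integrating by parts (valid since $\psi$ is compactly supported in $U\times(0,\infty)$), each integrand is controlled pointwise in $r$ by $c(\psi)\|\sigma_n(\rho^{\alpha,n})\|_{L^2(\mathrm{supp}\psi)}^2\cdot(F_1+F_3)$-type quantities that are uniformly bounded in $\alpha,n$ thanks to the uniform control of $\sigma_n$ on compact sets (Lemma \ref{lemma on approximating sigma by smooth function}) and the compact support of $\psi$ in the velocity variable keeping $\rho^{\alpha,n}$ away from zero on the relevant set. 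This yields $\mathbb{E}|M_t^{\alpha,n,\psi}-M_s^{\alpha,n,\psi}|^p\leq C_{\psi,p}|t-s|^{p/2}$ uniformly in $\alpha,n$. Choosing $p$ large enough so that $p\gamma<p/2-1$, i.e. $p>2/(1-2\gamma)$, Kolmogorov--Chentsov supplies a uniform bound on $\mathbb{E}\|M^{\alpha,n,\psi}\|_{C^{\gamma'}([0,T])}$ for some $\gamma'>\gamma$. The compact embedding $C^{\gamma'}([0,T])\hookrightarrow\hookrightarrow C^\gamma([0,T])$ together with Markov's inequality then gives tightness on $C^\gamma([0,T])$.

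The main obstacle is the first part, specifically the fact that without the velocity cutoff we have no time-regularity estimate for $\rho^{\alpha,n}$ itself — the It\^o-to-Stratonovich correction is singular at $\{\rho=0\}$ (Remark \ref{remark highlighting singularity in ito correction}). The metric $D$ circumvents this by only ever testing the solutions on $\Phi_{1/k}$-cutoffs, which is why verifying the equivalence of tightness statements through $D$ and checking that all constants in the Aubin--Lions argument are genuinely uniform in $\alpha$ and $n$ is the delicate point.
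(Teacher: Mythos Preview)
Your proposal is correct and matches the paper's approach, which simply defers to \cite[Prop.~5.26, Prop.~5.27]{fehrman2024well}: the reduction via the metric $D$ together with Aubin--Lions--Simon for part~1, and BDG plus Kolmogorov--Chentsov with the compact embedding $C^{\gamma'}\hookrightarrow\hookrightarrow C^\gamma$ for part~2, is exactly the intended route. One minor point in part~2: a single integration by parts still leaves a term $\partial_\xi\psi(x,\rho)\,\sigma_n(\rho)\nabla\rho$ in the integrand, so to obtain a quadratic-variation density that is genuinely bounded pointwise in $r$ (uniformly in $\alpha,n$) you should first rewrite $\psi(x,\rho)\sigma_n'(\rho)\nabla\rho=\nabla[\Sigma_n(x,\rho)]-\nabla_x\Sigma_n|_{\xi=\rho}$ with $\partial_\xi\Sigma_n=\psi\,\sigma_n'$, $\Sigma_n(x,0)=0$, and integrate by parts once more (the boundary term vanishes by the compact spatial support of $\psi$), after which no $\nabla\rho$ remains and your claimed bound $\mathbb{E}|M_t-M_s|^p\leq C_{\psi,p}|t-s|^{p/2}$ follows.
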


One of the main results to prove existence will come from the below technical lemma, see \textcolor{red}{\cite[Lem.~1.1]{gyongy1996existence}} for proof. 

\begin{lemma}\label{covergence in probability lemma}
Let $(\Omega,\mathcal{F},\mathbb{P})$ be a probability space and $\bar{X}$ be a complete separable metric space. Then a sequence $\{X_n:\Omega\to\bar{X}\}$ of $\bar{X}$ valued random variables converges in probability as $n\to\infty$ if and only if for every pair of sequences $(n_k,m_k)_{k=1}^\infty$ with $n_k,m_k\to\infty$ as $k\to\infty$, there is a further sub-sequence $(n_{k'},m_{k'})_{k=1}^\infty$ such that the joint laws $(X_{n_{k'}},X_{m_{k'}})$ converge weakly as $k'\to\infty$ to a probability measure $\mu$ on $\bar{X}\times\bar{X}$ satisfying $\mu(\{(x,y)\in\bar{X}\times\bar{X}:x=y\})=1$.
\end{lemma}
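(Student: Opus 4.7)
The plan is to prove the two directions separately, using classical arguments from the theory of convergence in probability on Polish spaces. The forward implication is essentially immediate, while the converse requires a contradiction argument via the Cauchy criterion in probability combined with the portmanteau characterisation of weak convergence.

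\textbf{Forward direction.} Suppose $X_n\to X$ in probability. Then for any pair of subsequences $(n_k,m_k)$ with $n_k,m_k\to\infty$, the pair $(X_{n_k},X_{m_k})$ converges in probability to $(X,X)$ on $\bar X\times\bar X$ equipped with the product metric. Convergence in probability implies convergence in distribution, so the joint laws converge weakly to the law $\mu$ of $(X,X)$. Since $\mu$ is clearly supported on the diagonal $\{(x,y):x=y\}$, no passage to a further subsequence is needed.

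\textbf{Reverse direction.} Assume the subsequence criterion but, for contradiction, that $\{X_n\}$ does not converge in probability. Because $\bar X$ is a complete separable metric space, a sequence converges in probability if and only if it is Cauchy in probability. Failure of the Cauchy criterion gives $\varepsilon,\delta>0$ and subsequences $n_k,m_k\to\infty$ with
\[
\mathbb{P}\bigl(d(X_{n_k},X_{m_k})\geq\varepsilon\bigr)\geq\delta\qquad\text{for every }k.
\]
By hypothesis, a further subsequence $(n_{k'},m_{k'})$ exists along which the joint laws converge weakly to some $\mu$ concentrated on the diagonal. The set $\{(x,y)\in\bar X\times\bar X:d(x,y)\geq\varepsilon\}$ is closed in the product topology (the metric is jointly continuous) and disjoint from the diagonal, hence has $\mu$-measure zero. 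The portmanteau theorem then yields
\[
\delta\leq\limsup_{k'\to\infty}\mathbb{P}\bigl(d(X_{n_{k'}},X_{m_{k'}})\geq\varepsilon\bigr)\leq\mu\bigl(\{d(x,y)\geq\varepsilon\}\bigr)=0,
\]
a contradiction. Therefore $\{X_n\}$ is Cauchy in probability, and by completeness of $\bar X$ converges in probability to some $\bar X$-valued random variable $X$.

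\textbf{Main obstacle.} The only genuinely delicate point is the Cauchy-completeness step: justifying that a Cauchy-in-probability sequence on a Polish space has an actual limit random variable. This is standard and follows by extracting a rapidly Cauchy subsequence (e.g.\ with $\mathbb{P}(d(X_{n_{k+1}},X_{n_k})>2^{-k})\leq 2^{-k}$), applying Borel--Cantelli to obtain an almost surely Cauchy, hence almost surely convergent, subsequence, and then upgrading to convergence in probability of the full sequence using the Cauchy property. All other ingredients (portmanteau, continuity of the metric, diagonal support) are routine.
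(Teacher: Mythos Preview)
Your proof is correct and complete. The paper does not give its own proof of this lemma; it simply cites \cite[Lem.~1.1]{gyongy1996existence} (Gy\"ongy--Krylov) for the argument, and your write-up is essentially the standard proof found there: the forward direction is immediate from convergence in probability implying convergence in distribution of the pair, and the converse runs through the Cauchy-in-probability criterion on a Polish space together with the portmanteau bound on the closed set $\{d(x,y)\geq\varepsilon\}$.
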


We state the main existence result, which is stated as \textcolor{red}{\cite[Thm.~5.29]{fehrman2024well}}. 
The full proof can be found there, we will just explain the main idea by putting all the previous results from this section together. \textcolor{red}{Similar arguments in a simpler setting can be found in \cite{debussche2014scalar}. 
See also \textcolor{red}{\cite[Thm.~6.2]{wang2022dean}} for a similar proof.}
\begin{theorem}[Existence of solution to \eqref{generalised Dean--Kawasaki equation Ito}]\label{existence theorem}
     Let $\xi^F, \Phi,\sigma$ and $\nu$ satisfy Assumptions \ref{Assumption on noise Fi} and \ref{existence assumptions}.
     Let $\rho_0\in L^1(\Omega;L^1(U))$ be non-negative and $\mathcal{F}_0$ measurable.\\
     Then there exists a stochastic kinetic solution to the generalised Dean--Kawasaki equation \eqref{generalised Dean--Kawasaki equation Ito} in the sense of Definition \ref{definition of stochastic kinetic solution of generalised Dean--Kawasaki equation}. Furthermore, the solution satisfies the estimates of Proposition \ref{energy estimate of regularised equation proposition}.
\end{theorem}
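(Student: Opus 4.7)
The proof is by a joint vanishing viscosity and smoothed-$\sigma$ limit, following closely the strategy of \cite[Thm.~5.29]{fehrman2024well}. The plan is to start from the regularised stochastic kinetic solutions $\rho^{\alpha,n}$ of Definition \ref{definiton of regularised and smoothed sigma equation}, whose existence is provided by Proposition \ref{proposition on stochastic kinetic solution of regularised DK equation with smooth and bounded sigma} applied with the smoothed coefficients $\sigma_n$ of Lemma \ref{lemma on approximating sigma by smooth function}, and to extract a stochastic kinetic solution of \eqref{generalised Dean--Kawasaki equation Ito} by sending $\alpha\to 0$ and $n\to\infty$ jointly. A preliminary truncation of the initial data reduces matters to the case $\rho_0 \in L^2(\Omega;L^2(U))\cap L^1(\Omega;Ent(U))$; the general $L^1$ case is then recovered at the end using the pathwise $L^1$-contraction of Theorem \ref{main uniqueness theorem} along a suitable approximating sequence.

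First I would use the tightness of the laws of $\{\rho^{\alpha,n}\}$ in $L^1([0,T];L^1(U))$ and of the martingale observables in $C^\gamma([0,T])$ furnished by Proposition \ref{proposition on tightness of rho and maringale term}, together with tightness of the driving Brownian motions and of the kinetic measures $q^{\alpha,n}$ (whose total mass is controlled uniformly in $\alpha,n$ through Proposition \ref{energy estimate of regularised equation proposition}). Skorokhod's representation theorem then provides, on an enlarged probability space $(\tilde\Omega,\tilde{\mathcal{F}},\tilde{\mathbb{P}})$, almost surely convergent copies $\tilde\rho^{\alpha,n}\to\tilde\rho$ strongly in $L^1([0,T];L^1(U))$, convergence of the kinetic measures in the local vague topology, and uniform convergence of the martingale terms.

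The next stage is to identify $\tilde\rho$ as a stochastic kinetic solution. Integrability of the flux and the regularity in point three of Definition \ref{definition of stochastic kinetic solution of generalised Dean--Kawasaki equation} follow from lower semicontinuity applied to the explicit identification
\[
q^{\alpha,n}=\delta_0(\xi-\rho^{\alpha,n})\bigl(\Phi'(\rho^{\alpha,n})+\alpha\bigr)|\nabla\rho^{\alpha,n}|^2
\]
noted after Proposition \ref{proposition on stochastic kinetic solution of regularised DK equation with smooth and bounded sigma}, combined with the bounds of Proposition \ref{energy estimate of regularised equation proposition}. Vanishing at infinity (point four) is inherited from the bound \eqref{second energy estimate of regularised solution}, using that $\|h_M\|_\infty\to 0$ as $M_1\to\infty$. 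To pass to the limit in the kinetic equation \eqref{kinetic equation}, the deterministic terms are handled by strong $L^1$-convergence and local uniform continuity of the nonlinearities (Assumption \ref{assumptions on coefficients for uniqueness} and Lemma \ref{lemma on approximating sigma by smooth function}), while the noise term reduces to a standard martingale identification: the $C^\gamma$-tightness of $M^{\alpha,n,\psi}$ ensures the limits are continuous $\tilde{\mathcal{F}}_t$-martingales with the correct quadratic variation, realisable as stochastic integrals against a cylindrical noise adapted to $\tilde{\mathcal{F}}_t$.

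Finally, the pathwise $L^1$-contraction of Theorem \ref{main uniqueness theorem}, combined with the decay at zero of Proposition \ref{proposition about kinetic measure at 0} (which the limiting kinetic measure inherits from $q^{\alpha,n}$), shows that any joint limit point of $(\rho^{\alpha,n},\rho^{\alpha',n'})$ is supported on the diagonal. Lemma \ref{covergence in probability lemma} then upgrades weak convergence to convergence in probability on the original probability space, producing a solution adapted to the original filtration $\mathcal{F}_t$. The main obstacle I anticipate is the verification of the boundary condition in point two of Definition \ref{definition of stochastic kinetic solution of generalised Dean--Kawasaki equation}: the energy estimates only give uniform $H^1_0$-control of $\Theta_\Phi(\rho^{\alpha,n})-\Theta_\Phi(g)$, so extracting the required trace property of $[(\Phi(\tilde\rho)\wedge k)\vee k^{-1}]-[(\bar f\wedge k)\vee k^{-1}]$ for each $k\in\mathbb{N}$ requires a careful local $H^1$-control via composition with Lipschitz truncations, together with a cutoff argument exploiting the boundary PDE $g$ of Definition \ref{definition of functions g and h_M} and the trace theorem on the $C^2$-domain $U$.
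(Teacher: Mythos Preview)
Your proposal is correct and follows essentially the same route as the paper: joint tightness of $(\rho^{\alpha,n},q^{\alpha,n},M^{\alpha,n,\psi_j},B)$, Skorokhod representation, identification of the limiting martingales and kinetic equation, and finally the Gy\"ongy--Krylov lemma (Lemma \ref{covergence in probability lemma}) combined with pathwise uniqueness to upgrade to a probabilistically strong solution on the original filtration. Your explicit mention of the initial-data truncation to reach $L^2\cap Ent$ and the subsequent $L^1$-contraction closure, as well as the need to verify the trace condition in point two of Definition \ref{definition of stochastic kinetic solution of generalised Dean--Kawasaki equation} via the harmonic lift $g$, are exactly the bounded-domain refinements the paper leaves implicit in its reference to \cite[Sec.~5]{fehrman2024well}.
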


\begin{proof}
We provide the main steps of the proof and omit the technical details.
\begin{enumerate}
    \item \textbf{Tightness.}\\Recall  the stochastic kinetic solutions $\{\rho^{\alpha,n}\}_{\alpha\in(0,1),n\in\mathbb{N}}$ as defined in Definition \ref{definiton of regularised and smoothed sigma equation}, martingales $M^{\alpha,n,\psi}$ as introduced in Proposition \ref{proposition on tightness of rho and maringale term} and introduce the measures 
    \[q^{\alpha,n}:=\delta_0(\xi-\rho^{\alpha,n})\left(|\nabla\Theta_{\Phi}(\rho^{\alpha,n})|^2+\alpha|\rho^{\alpha,n}|^2\right).\]
     Proposition \ref{proposition on stochastic kinetic solution of regularised DK equation with smooth and bounded sigma} gives us existence of the solutions and the energy estimate of Proposition \ref{energy estimate of regularised equation proposition} allows us to deduce that $\{q^{\alpha,n}\}_{\alpha\in(0,1),n\in\mathbb{N}}$ are finite kinetic measures.\\
    Using the kinetic equation given in Proposition \ref{proposition on stochastic kinetic solution of regularised DK equation with smooth and bounded sigma}, one can write an equation for the kinetic functions $\chi^{\alpha,n}$ of $\rho^{\alpha,n}$. 
    Fixing a dense sequence of functions $\{\psi_j\}_{j\in\mathbb{N}}$ of $C_c^{\infty}(U\times(0,\infty))$ in the strong $H^s(U\times (0,\infty))$ topology (for $s>d/2+1$), we consider the random variables coming from the kinetic equation of $\chi^{\alpha,n}$:
    \[X^{\alpha,n}:=(\rho^{\alpha,n},\nabla\Theta_{\Phi}(\rho^{\alpha,n}), \alpha\nabla\rho^{\alpha,n},q^{\alpha,n},(M^{\alpha,n,\psi_j})_{j\in\mathbb{N}})\]
    on the space
    \[\Bar{X}:=L^1(U\times(0,T))\times L^2\left(U\times(0,T);\mathbb{R}^d\right)^2\times  \mathcal{M}(U\times(0,\infty)\times[0,T])\times C([0,T])^\mathbb{N}.\]
    Equip $\bar{X}$ with the product topology, with the strong topology on $L^1(U\times(0,T))$, the weak topologies on $L^2(U\times(0,T);\mathbb{R}^d)$ and $\mathcal{M}(U\times(0,\infty)\times[0,T])$ and topology of component wise convergence in the strong norm on $C([0,T])^\mathbb{N}$, in particular using the norm constructed before:
     \begin{align*}
       D_C((f_k),(g_k))=\sum_{k=1}^{\infty}2^{-k}\left(\frac{\|f_k-g_k\|_{C([0,T])}}{1+\|f_k-g_k\|_{C([0,T])}}\right).
   \end{align*} 
   To show convergence in probability of the random variables $X^{\alpha,n}$ we try to use Lemma \ref{covergence in probability lemma}.
   To this end, we consider two subsequences $(\alpha_k,n_k)$, $(\beta_k,m_k)$ such that $\alpha_k,\beta_k\to 0$ and $n_k,m_k\to \infty$ as $k\to\infty$
   Consider the laws on $\bar{Y}:=\bar{X}\times\bar{X}\times C([0,T])^\mathbb{N}$ of 
   \[(X^{\alpha_k,n_k},X^{\beta_k,m_k}, B),\]
   where $B=(B^j)_{j\in\mathbb{N}}$ are the Brownian motions defined in the noise $\xi^F$ in Definition \ref{definition noise}.\\
   The energy estimate in Proposition \ref{energy estimate of regularised equation proposition} alongside the two tightness results in Proposition \ref{proposition on tightness of rho and maringale term}
 show that the laws of $(X^{\alpha,n})$ are tight on $\bar{X}$.
\item \textbf{Skorokhod representation theorem.}\\
By Prokhrov's theorem, passing to a sub-sequence still denoted by $k\to\infty$, there is a probability measure $\mu$ on $\bar{Y}$ such that $(X^{\alpha_k,n_k},X^{\beta_k,m_k}, B)\to\mu$ as $k\to\infty$.\\
$\bar{X}$ being separable implies that $\bar{Y}$ is separable, so we can apply Skorokhod representation theorem. It tells us that there is an auxiliary probability space $(\Tilde{\Omega},\Tilde{\mathcal{F}},\tilde{\mathbb{P}})$ such that for every $k$,
\[(\Tilde{Y}^k,\Tilde{Z}^k,\Tilde{\beta}^k)=(X^{\alpha_k,n_k},X^{\beta_k,m_k}, B) \hspace{20pt}\text{in law on }\hspace{3pt}\bar{Y},\]
\[(\Tilde{Y},\Tilde{Z},\Tilde{\beta})=\mu \hspace{20pt}\text{in law on }\hspace{3pt}\bar{Y},\]
and we have the almost sure convergence as $k\to\infty$:
\[(\Tilde{Y}^k,\Tilde{Z}^k,\Tilde{\beta}^k)\to (\Tilde{Y},\Tilde{Z},\Tilde{\beta})\]
in the space $\bar{X}$ and $C([0,T])$. To apply Lemma \ref{covergence in probability lemma} we will show $\Tilde{Y}=\Tilde{Z}$.
\item \textbf{Characterising $\Tilde{Y}$.}\\
It follows from the equality in law of $\Tilde{Y}^k$ and $X^{\alpha_k,n_k}$ that there is a $\Tilde{\rho}^k\in L^\infty(\Omega\times[0,T];L^1(U))$ and $\Tilde{G}^k_1,\Tilde{G}^k_2,\Tilde{q}^k,(\Tilde{M}^{k,\psi_j})_{j\in\mathbb{N}}$ in the appropriate spaces such that 
\[\Tilde{Y}^k=(\Tilde{\rho}^k,\Tilde{G}^k_1,\Tilde{G}^k_2,\Tilde{q}^k,(\Tilde{M}^{k,\psi_j})_{j\in\mathbb{N}}).\]
By converting various expectations $\Tilde{\mathbb{E}}$ into expectations $\mathbb{E}$ by using the equalities in law above, and further using that $\alpha_k\nabla\rho^k\rightharpoonup0$ in $L^2_tL^2_x$ by energy estimates (Proposition \ref{energy estimate of regularised equation proposition}) tells us that in the limit as $k\to\infty$
\[\Tilde{Y}=(\Tilde{\rho},\nabla\Theta_{\Phi}(\Tilde{\rho}),0,\Tilde{q},(\Tilde{M}^j)_{j\in\mathbb{N}}),\]
where $\Tilde{p}\in L^1(\tilde{\Omega}\times[0,T];L^1(U))$ and $\Tilde{q}$ is the corresponding kinetic measure.\\
It remains to characterise $\Tilde{M}^j$, and to do this we first need to characterise $\Tilde{\beta}^k$.
\item \textbf{The path $\Tilde{\beta}$ is a Brownian Motion.}\\
Writing for each $k$, $\Tilde{\beta}^k:=(\Tilde{\beta}^{k,j})_{j\in\mathbb{N}}$, and the limiting process $\Tilde{\beta}=(\Tilde{\beta}^j)_{j\in\mathbb{N}}$, one obtains using the same trick of interchanging expectations $\Tilde{\mathbb{E}}$ and $\mathbb{E}$ using equalities in law that, by proving first for $\Tilde{\beta}^{k,j}$ then passing to the limit in $k$, that for any $F:\Bar{Y}\to\mathbb{R}$, $0\leq s\leq t\leq T$, $j\in\mathbb{N}$
\[\tilde{\mathbb{E}}\left(F\left(\tilde{Y}|_{[0,s]},\tilde{Z}|_{[0,s]},\tilde{\beta}|_{[0,s]}\right)\left(\tilde{\beta}_t^j-\tilde{\beta}_s^j\right)\right)=0.\]
Identically for $i,j\in\mathbb{N}$, $0\leq s\leq t\leq T$,
\[\tilde{\mathbb{E}}\left(F\left(\tilde{Y}|_{[0,s]},\tilde{Z}|_{[0,s]},\tilde{\beta}|_{[0,s]}\right)\left(\tilde{\beta}_t^j\tilde{\beta}_t^i-\tilde{\beta}_s^j\tilde{\beta}_s^i-\delta_{ij}(t-s)\right)\right)=0,\]
where $\delta_{ij}$ is the Kronecker delta. 
Using these and the fact that $\tilde{\beta}^j$ has almost surely continuous paths,we conclude using L\'evy's characterisation that $\tilde{\beta}^j$ are independent one dimensional Brownian motions with respect to the filtration
\[\mathcal{G}_t=\sigma(\tilde{Y}|_{[0,t]},\tilde{Z}|_{[0,t]},\tilde{\beta}|_{[0,t]}).\]
By continuity and  uniform integrability $\tilde{\beta}$ is a Brownian motion with respect to the augmented filtration $\bar{\mathcal{G}}$ of $\mathcal{G}$.
\item\textbf{$(\tilde{M}^j)_{j\in\mathbb{N}}$ are $\bar{\mathcal{G}}_t$ martingales.}\\
The statement follows using a similar technique as the previous point. First showing for $j\in\mathbb{N}$, $0\leq s\leq t\leq T$ and $k\in\mathbb{N}$,
\[\tilde{\mathbb{E}}\left(F\left(\tilde{Y}^k|_{[0,s]},\tilde{Z}^k|_{[0,s]},\tilde{\beta}^k|_{[0,s]}\right)\left(\tilde{M}_t^{k,\psi_j}-\tilde{M}_s^{k,\psi_j}\right)\right)=0.\]
The result follows by taking the limit as $k\to\infty$ using the uniform integrability of the martingales.
\item \textbf{$(\tilde{M}^j)_{j\in\mathbb{N}}$ are stochastic integrals with respect to $\tilde{\beta}$.}\\
Again this follows from the same techniques as before. First proving the results for the approximations and then taking a limit as $k\to\infty$, we can prove that 
\[\tilde{\mathbb{E}}\left(F\left(\tilde{Y}|_{[0,s]},\tilde{Z}|_{[0,s]},\tilde{\beta}|_{[0,s]}\right)\left(\tilde{M}^j_t\tilde{\beta}^i_t - \tilde{M}^j_s\tilde{\beta}^i_s -\int_s^t\int_U\psi_j(x,\tilde{\rho})\nabla\cdot(\sigma(\tilde{\rho})f_i)\right)\right)=0,\]
where recall $f_i$ are defined as the spatial components of the noise $\xi^F$.
Hence this shows for each $i\in\mathbb{N}$,
\[\tilde{M}^j_t\tilde{\beta}^i_t -\int_s^t\int_U\psi_j(x,\tilde{\rho})\nabla\cdot(\sigma(\tilde{\rho})f_i)\hspace{15pt} \text{is a}\hspace{5pt} \mathcal{G}-\text{martingale}.\]
It is easy to see by uniform integrability and continuity that the process is also a $\bar{\mathcal{G}}_t$ martingale. Identical arguments show that for $j\in\mathbb{N}$
\[(\tilde{M}^j_t)^2-\int_0^t\sum_{k=1}^\infty \left(\int_U\psi_j(x,\tilde{\rho})\nabla\cdot(\sigma(\tilde{\rho})f_k)\right)^2\]
is a continuous $\bar{\mathcal{G}}_t$ martingale. Putting everything together, due to an explicit calculation using the quadratic variation of Brownian motion, for every $j\in\mathbb{N}$, $t\in[0,T]$,
\[\tilde{E}\left(\left(\tilde{M}^j_t -\int_0^t\int_U\psi_j(x,\tilde{\rho})\nabla\cdot(\sigma(\tilde{\rho})\tilde{\xi}^F)\right)^2\right)=0,\]
where $\tilde{\xi}^F$ is defined analogously to $\xi^F$ but with Brownian Motion $\tilde{\beta}$ on $\tilde{\Omega}$. It follows that $\tilde{M}^j_t =\int_0^t\int_U\psi_j(x,\tilde{\rho})\nabla\cdot(\sigma(\tilde{\rho})\tilde{\xi}^F)$.
\item \textbf{Tying up loose ends.}\\
One needs to show the following technical steps in order to finish the proof.
\begin{enumerate}
    \item Show the limiting kinetic measure $\tilde{q}$ is almost surely a kinetic measure for $\tilde{p}$.
    \item Show that $\sigma(\tilde{\rho})$ is in $L^2$.
    \item Remove the set $\mathcal{A}:=\{t\in[0,T]:\tilde{q}(\{t\}\times U \times \mathbb{R})\neq 0\}$. Outside $\mathcal{A}$ there is no ambiguity when writing the kinetic equation for the kinetic function $\tilde{\chi}$.
    \item Show that $\tilde{\rho}\in L^1([0,T];L^1(U))$.\\
    This is quite a technical step, it involves looking at left and right continuous representations of $\tilde{\rho}$.
    One needs to study properties of the left and right kinetic functions $\chi^{\pm}$.
    Conclude by showing that the measure $\tilde{q}$ almost surely has no atoms in time.
\end{enumerate}
\item\textbf{Conclusion.}\\
We showed the existence of $\tilde{\rho}$ with representative in $L^1(\tilde{\Omega}\times[0,T];L^1(U))$ 
$\tilde{\rho}$ is a stochastic kinetic solution in the sense of Definition \ref{definition of stochastic kinetic solution of generalised Dean--Kawasaki equation} with respect to Brownian Motion $\tilde{\beta}$ and filtration $(\bar{\mathcal{G}}_t)_{t\in[0,T]}$. 
That is to say, we showed the existence of a probabilistically weak solution.
We now explain how to extend this to a probabilistically strong solution.\\
Repeating all steps from Step 3 of the above, it follows that we can characterise $\tilde{Z}$ as 
\[\Tilde{Z}=(\bar{\rho},\nabla\Theta_{\Phi}(\bar{\rho}),0,\bar{q},(\bar{M}^j)_{j\in\mathbb{N}}).\]
Continuing, one shows there is an $L^1$ continuous representation of $\bar{\rho}$ which is a stochastic kinetic solution in the sense of Definition \ref{definition of stochastic kinetic solution of generalised Dean--Kawasaki equation} with respect to Brownian Motion $\tilde{\beta}$ and filtration $(\bar{\mathcal{G}}_t)_{t\in[0,T]}$.\\
The uniqueness theorem, Theorem \ref{main uniqueness theorem} tells us $\tilde{\rho}=\bar{\rho}$ almost surely in $L^1_tL^1_x$.\\
By Lemma \ref{covergence in probability lemma}, it follows that after passing $\{\rho^{a,n}\}$ to a sub-sequence $\alpha_k,n_k$ on the original probability space $(\Omega,\mathcal{F},\mathbb{P})$, there is a random variable $\rho\in L^1(\Omega\times[0,T];L^1(U))$ such that $\rho^{\alpha_k,n_k}$ converges to $\rho$ in probability.\\
Working along a further sub-sequence we have that $\rho^{\alpha_k,n_k}$ converges almost surely to $\rho$.  
Repeating the steps again above we can show $\rho$ is a stochastic kinetic solution of the generalised Dean--Kawasaki equation \eqref{generalised Dean--Kawasaki equation Ito} in the sense of Definition \ref{definition of stochastic kinetic solution of generalised Dean--Kawasaki equation}.
Noting that $\rho^{\alpha,n}$ are all probabilistically strong solutions, $\rho$ is also a probabilistically strong solution.
The energy estimates follow from the estimates for the regularised equations and the weak lower semicontinuity of the Sobolev norm.
\end{enumerate}
\end{proof}

\bibliographystyle{alpha}  
\newpage
\Large{\textbf{Acknowledgements}}\\
\vspace{0pt}
\normalsize

The author is grateful to his PhD advisor Benjamin Fehrman for the constant academic support and guidance.
This work would not have been possible without him.
\textcolor{red}{The author is also grateful to the reviewers for helpful feedback on the work.}\\
This publication is based on work supported by the EPSRC Centre for Doctoral Training in Mathematics of Random Systems: Analysis, Modelling and Simulation (EP/S023925/1).

\end{document}